%% file: MainText.tex
\documentclass[11pt,usenames,dvipsnames]{article}
\usepackage[export]{adjustbox} 
\usepackage{fancyhdr}         
\usepackage{subcaption} 
\usepackage{datetime}    
\usepackage{array}
\usepackage{tikz}
\usetikzlibrary{arrows.meta,calc,positioning,decorations.pathreplacing}
\usepackage{amsmath,amssymb}   
\DeclareMathOperator*{\argmin}{arg\,min}
\usepackage[bottom]{footmisc}
\usepackage{mathtools}        
\usepackage{commath}         
\usepackage{enumitem}

\usepackage{caption}         
\usepackage[includeheadfoot,
            left=0.8in,
            right=0.8in,
            top=0.20in,
            bottom=0.20in,
            headheight=8pt]{geometry}
\usepackage{adjustbox}
\usepackage{comment}
\usepackage{float}
\usepackage{authblk}

\usepackage{booktabs}
\usepackage{siunitx}
\renewcommand{\headrulewidth}{0pt}
\renewcommand{\footrulewidth}{0pt}

\usepackage[
  backend=biber,
  style=numeric,
  sorting=none,
  giveninits=true,
  maxcitenames=2,
  mincitenames=1,
  maxbibnames=99,
  date=year,
  doi=false,isbn=false,url=false
]{biblatex}

\usepackage{hyperref}
\usepackage[open,openlevel=2,depth=3,atend]{bookmark}

\DefineBibliographyStrings{english}{%
  andothers = {et\addabbrvspace al\adddot}
}

\renewcommand*{\multicitedelim}{\addcomma\space}

\DeclareCiteCommand{\parencite}[\mkbibparens]
  {\usebibmacro{prenote}}
  {%
    \printnames{labelname}%
    \addspace
    \printtext[bibhyperref]{\mkbibbrackets{\printfield{labelnumber}}}%
  }
  {\multicitedelim}
  {\usebibmacro{postnote}}

\DeclareCiteCommand{\textcite}[\mkbibparens]
  {\usebibmacro{prenote}}
  {%
    \printnames{labelname}%
    \addspace
    \printtext[bibhyperref]{\mkbibbrackets{\printfield{labelnumber}}}%
  }
  {\multicitedelim}
  {\usebibmacro{postnote}}

\DeclareNameAlias{author}{family-given}
\DeclareDelimFormat{multinamedelim}{\addcomma\space}
\DeclareDelimFormat{finalnamedelim}{\addspace\&\space}
\DeclareFieldFormat{pages}{#1}
\newbibmacro*{journalname}{%
  \iffieldundef{shortjournal}
    {\printtext{\mkbibemph{\printfield{journaltitle}}}}
    {\printtext{\mkbibemph{\printfield{shortjournal}}}}}
\DeclareBibliographyDriver{article}{%
  \usebibmacro{author}%
  \setunit{\labelnamepunct}\newblock
  \printfield[title]{title}%
  \newunit\newblock
  \usebibmacro{journalname}%
  \setunit{\space}%
  \printfield[volume]{volume}%
  \setunit{\space}%
  \printtext[parens]{\printfield{year}}%
  \setunit{\addcomma\space}%
  \printfield{pages}%
  \finentry}
\DeclareFieldFormat[article,inproceedings,incollection,periodical]{volume}{\mkbibbold{#1}}
\DeclareFieldFormat{doilink}{\href{https://doi.org/#1}{https://doi.org/#1}}
\DeclareBibliographyDriver{book}{%
  \usebibmacro{author/editor}%
  \setunit{\labelnamepunct}\newblock
  \printtext{\mkbibemph{\printfield{title}}}%
  \iffieldundef{series}{}{%
    \setunit{\addcomma\space}\printfield{series}%
    \iffieldundef{volume}{}{%
      \addcomma\space \printfield{volume}}}%
  \setunit{\addcomma\space}%
  \printtext[parens]{%
    \printlist{publisher}\addcomma\space\printlist{location}\addcomma\space\printfield{year}}%
  \iffieldundef{doi}{}{%
    \par\noindent\printfield[doilink]{doi}}%
  \finentry
}
\DeclareBibliographyDriver{thesis}{%
  \usebibmacro{author}%
  \setunit{\labelnamepunct}\newblock
  \printtext{\mkbibemph{\printfield{title}}}%
  \setunit{\addcomma\space}%
  \printfield{type}% 
  \setunit{\addcomma\space}%
  \printlist{institution}%
  \setunit{\space}%
  \printtext[parens]{\printfield{year}}%
  \finentry}
\AtEveryBibitem{%
  \clearfield{labelyear}%
  \clearfield{extrayear}%
  \clearfield{month}%
  \clearlist{language}%
}

\usepackage{amsthm}
\usepackage{mathrsfs,bbm,url}
\usepackage[super]{nth}
\usepackage[open,openlevel=2,depth=3,atend]{bookmark}
\hypersetup{pdfstartview=XYZ}
\usepackage[colorinlistoftodos]{todonotes}
\usepackage{mathabx}
\usepackage{tikz-cd}
\tikzcdset{row sep/normal=3.6em,column sep/normal=3.6em}
\usepackage{subcaption}
\usepackage{epstopdf}

\theoremstyle{plain}
\newtheorem{theorem}{Theorem}
\newtheorem{lemma}{Lemma}

\newtheorem{proposition}{Proposition}

\theoremstyle{definition}

\theoremstyle{remark}
\newtheorem{remark}{Remark}

\newcommand{\lip}{\mathrm{Lip}}

\begin{document}

\pagestyle{fancy}
\fancyhf{}                   
\fancyfoot[C]{\thepage}        
\fancypagestyle{plain}{%
  \fancyhf{}%
  \fancyfoot[C]{\thepage}%
  \renewcommand{\headrulewidth}{0pt}%
  \renewcommand{\footrulewidth}{0pt}%
}

\title{Data-Driven Reduced Modeling of Delayed Dynamical Systems via Spectral Submanifolds \\[0.5em] (Submitted to Chaos) }
\author[1]{Giacomo Abbasciano}
\author[1]{Gergely Buza}
\author[1,$\dagger$]{George Haller}
\affil[1]{Department of Mechanical and Process Engineering, ETH Zurich, 8092 Zurich, Switzerland}

\date{}  
\maketitle
\begingroup
\renewcommand\thefootnote{\textdagger}
\footnotetext{Corresponding author: \href{mailto:georgehaller@ethz.ch}{georgehaller@ethz.ch}}
\endgroup

\input{Sections/0_Abstract}
\newpage
\input{Sections/1_Introduction}
\input{Sections/2_Section2}
\input{Sections/3_Section3}
\input{Sections/4_Section4}

\input{Sections/5_Conclusions}
\input{Sections/6_Aknowledgments}
\appendix
\input{Sections/AppendixA}
\input{Sections/AppendixB}
\input{Sections/AppendixC}

\input{Sections/AppendixD}
\addcontentsline{toc}{section}{References}
\printbibliography
\end{document}

%% file: Sections/0_Abstract.tex
\section*{Abstract}
We show how the recent extension of spectral submanifold (SSM) theory to delay differential equations (DDEs) enables data-driven model reduction of nonlinear delay systems. First, using a scalar DDE with a single discrete delay, we compare equation-based and data-driven SSM reductions, to illustrate the need for the latter. 
We then use the same algorithm to obtain purely data-driven, SSM-reduced, delay-free ODE models for several nonlinear delayed systems. Our approach requires no information about the form of the underlying DDE, or about the number and magnitude of the delays it contains.
Our SSM-reduced, low-dimensional models remain predictive even for chaotic dynamics. We also illustrate the use of parametric SSM-reduction to capture bifurcations in systems with both distributed and discrete delays. Finally we extend the theoretical underpinning of delayed SSM-reductions to non-autonomous systems with periodic delays, and apply these results to experimental data from a control system with feedback delay and quantization.

%% file: Sections/1_Introduction.tex
\section{Introduction  }
Many systems in nature and engineering exhibit dynamics that depend on past states. In epidemiology, the spread of diseases is delayed by the incubation time \parencite{AlTuwairqiAlHarbi2022}, and delays in vaccine production can destabilize the disease-free equilibrium. In autonomous vehicle control \parencite{SzakszOroszStepan2024Traffic, SzakszOroszStepan2025Discretization, OroszMolnar2025ConnectedVehicles}, delays arise from human reaction times. More generally, feedback-control systems involve delays due to measurement, computation, and zero-order hold (ZOH) effects \parencite{Stepan2017, InspergerStepan2011, Vizi2024DigitalDetection, Insperger2015Semi-discretizationBalance}.

Such delays can often be neglected without compromising model performance. Indeed, for sufficiently small delays, the dominant dynamics remain close to those of the corresponding delay-free system. More specifically, as shown in \parencite{BuzaHaller2025}, the spectrum of the delayed system retains the eigenvalues of the undelayed system, which remain dominant for small delays, while infinitely many additional eigenvalues originate from the leftmost part of the complex plane, whose associated dynamics decay rapidly. This explains the effectiveness of low-dimensional models and linear controllers that do not account for small delays, enabling the construction of such low-dimensional yet accurate models for computational cost reduction. However, as the delay increases, the additional eigenvalues move toward the imaginary axis and may become dominant, eliminating the spectral gap between delay-induced and undelayed eigenvalues. These may then cross the imaginary axis, leading to bifurcations that are absent in the undelayed system \parencite{Insperger2015Semi-discretizationBalance}.

At the same time, delay differential equations (DDEs) are inherently infinite-dimensional, because their phase space is the Banach space of continuous history functions. Consequently, they can exhibit richer dynamics than ordinary differential equations (ODEs) of the same nominal dimension. Indeed, even scalar delay equations may display chaotic dynamics \parencite{MackeyGlass1977}.

Beyond delay effects, nonlinearizable phenomena, such as the coexistence of isolated steady states and bifurcations, are ubiquitous in applied sciences and engineering. Spectral submanifolds (SSMs) enable the construction of mathematically justified reduced models for such nonlinearizable dynamics on the slowest family of low-dimensional, normally attracting invariant manifolds emanating from a hyperbolic fixed point. These manifolds form a family whose members all have the same dimension and are tangent to a direct sum of dominant eigenspaces at the fixed point, i.e., to a slow spectral subspace (see \parencite{Haller2025, Haller2016, Cenedese2022}).

Prior work based on SSM has analyzed and modeled such behaviors in systems including turbulent flows \parencite{Kaszs2024, KaszasHaller2025}, fluid sloshing \parencite{Cenedese2022Data-drivenSubmanifolds, Axs2024}, nonsmooth dynamics \parencite{Bettini2024, Morsy2025Data-DrivenProblem, AbbascianoEndreszStepanHaller2026JSV}, control of soft robots \parencite{Alora2022, Alora2023, Alora2025, Kaundinya2025, BettiniEtAl2026SoftRoboticsSSM}, alongside global and local bifurcations \parencite{AbbascianoEndreszStepanHaller2026JSV, KingEtAl2026ParametricSSM}.

Equation-driven approaches for nonlinear delay systems have been extensively employed. Most pertinent to this work, classic center-manifold reduction \parencite{StepanHaller1995, Diekmann1995} has recently been formally extended to SSMs directly in the function space \parencite{Szaksz2025SpectralSystems}, without the need for discretization of the delay equation. A rigorous existence and regularity result for such SSMs in delay systems has subsequently been established in \parencite{BuzaHaller2025}. Despite their interpretability, equation-driven SSM-reduction is computationally demanding and therefore typically limited to low-dimensional systems, as the cost increases with both the SSM dimension and the polynomial order desired to parametrize the manifold and reduced dynamics. Moreover, equation-driven SSM reduction requires full knowledge of the governing equations, parameters, and delays, which is often unavailable in real-world applications.

These limitations of equation-driven model reduction motivate the development of data-driven methods for modeling delay systems. Approaches such as neural delay differential equations (NDDEs) and SINDy-based \parencite{BruntonEtAl2016PNAS} methods have been successfully applied to systems with multiple discrete delays \parencite{JiOrosz2022, JiOrosz2024SingleTraj, BredaEtAl2025DDMethods, PecileEtAl2025, SandozEtAl2023}. These methods are formulated under different assumptions on the available prior knowledge about the delays. For instance, when the number and magnitude of the delays are not known a priori, E-SINDy estimates the delay magnitudes from data after specifying an upper bound on their number \parencite{BredaEtAl2025DDMethods}. In the absence of any prior knowledge of the number of delays, P-SINDy instead estimates the maximum delay directly from data. This leads to a trajectory-based formulation that focuses on reproducing the observed trajectory and extending the simulation beyond the measurement window, rather than explicitly identifying the right-hand side of the system \parencite{BredaEtAl2025DDMethods}.

In addition, these approaches rely on the explicit form of the DDE to select basis functions. The choice of these functions may in turn strongly impact the performance of the algorithm \parencite{BredaEtAl2025DDMethods}. This restriction is particularly relevant when the full dynamics include non-polynomial terms, such as those in the Mackey--Glass equation \parencite{BredaEtAl2025DDMethods}.
Finally, NDDEs provide a neural-network-based framework and likewise require delay values to be identified from data when they are not known a priori \parencite{JiOrosz2022, JiOrosz2024SingleTraj, BredaEtAl2025DDMethods}. 

Here, we show how SSM-based reduction can be extended to data-driven settings for DDEs. This approach enables the modeling of nonlinear phenomena, including coexisting isolated steady states and bifurcations, directly from observed data. 
In particular, the specific form of the DDE plays no role and number and magnitude of the delays need not be a priori known. As long as the delayed system has class $C^1$ governing equations, SSMs always exist with at least $C^1$ regularity and are tangent to finite-dimensional slow spectral subspaces in the infinite-dimensional phase space of generally unknown DDE. In order to handle control applications with ZOH effects, we also prove an extension of the SSM results of \parencite{BuzaHaller2025} to systems with periodic delays.

This paper is organized as follows. Section \ref{section2} recalls the settings for nonlinear autonomous DDEs and the main theoretical results on their SSMs, and extends the theory to periodically delayed DDEs. Section \ref{sec:methodologies} presents the data-driven procedures used for SSM-based model reduction, with particular emphasis on the estimation of the quantities required directly from data. Section \ref{section4} demonstrates the proposed methodology through a sequence of examples. It begins by comparing equation-driven and data-driven SSM reductions for the Hutchinson equation, thereby motivating the need for data-driven approaches. The section then applies data-driven SSM reduction to numerically simulated systems of increasing complexity, including single-delay systems with limit cycles, multiple-delay systems exhibiting chaotic dynamics, and a distributed-delay system with multiple saddle-node bifurcations of limit cycles, analyzed through parametric SSMs. Finally, the theoretical extension to periodically delayed DDEs is illustrated on experimental data involving delay and quantization effects typical of digital controllers.

%% file: Sections/2_Section2.tex
\section{Existence of SSMs for nonlinear DDEs }\label{section2}
In this section, we provide a high-level overview of delay differential equations on infinite-dimensional spaces, and introduce the relevant theory, following \parencite{Pazy1983, Diekmann1995}, before recalling the main results of \parencite{buza2025existence} on the existence and regularity of SSMs in delay systems. Finally, we show how these results can be applied to justify and carry out data-driven SSM-reduction in such systems.

\subsection{Analogies between ODEs and differential equations on infinite dimensional spaces }
We recall the notion of strongly continuous semigroups  \parencite{Pazy1983}, fundamental to the study of linear evolutionary systems on Banach spaces (in particular, delay equations). In finite dimensions, they coincide with matrix exponentials arising from first-order linear ODEs. This extends naturally to a Banach space \(X\). Specifically, if \(A \in \mathcal{L}(X)\) is bounded, the Cauchy problem
\[
\dot{x} = Ax, \qquad x(0) = x_0,
\]
admits the unique solution
\[
x(t) = e^{tA}x_0 = \sum_{k=0}^{\infty} \frac{t^k}{k!}A^k x_0, \qquad t \in \mathbb{R}.
\]

For unbounded operators, typical in PDEs and DDEs, this representation may fail and solutions may exist only for \(t \geq 0\). It is therefore preferable to describe the dynamics via solution operators, leading to the notion of strongly continuous semigroups, namely a family of linear operators $T:[0,\infty)\rightarrow\mathcal{L}(X)$ satisfying

\[
T(0) = I, \qquad T(t+s) = T(t)T(s), \quad t,s \geq 0,
\]
and 
\begin{equation}\label{eq:strongContinuity}
\lim_{t \downarrow 0} \|T(t)x - x\| = 0, \quad \text{for all } x\in X.
\end{equation}
The classical notion of continuous dependence on time and initial conditions are encoded in \eqref{eq:strongContinuity} and in the boundedness of $T(t)$, respectively.

The infinitesimal generator $A:\mathrm{dom}(A)\rightarrow X$ of the semigroup $T(t)$ is the linear operator defined on
\[
\mathrm{dom}(A) \doteq \left\{ x \in X \;\middle|\; \lim_{\tau \downarrow 0} \frac{T(\tau)x - x}{\tau} \text{ exists} \right\},
\]
as
\[
Ax \doteq \lim_{\tau \downarrow 0} \frac{T(\tau)x - x}{\tau}, \qquad x \in \mathrm{dom}(A).
\]

The fundamental qualitative behavior of solutions to nonlinear ordinary differential equations induced by vector fields on open subsets of \(\mathbb{R}^n\) is classically described through the notion of the flow map. In the setting of Banach spaces, we have to distinguish the forward time direction which leads to the notion of a semiflow. Indeed, the backward initial value problem is usually not well posed: backward solutions do not exist, or may fail to be unique \parencite{Diekmann1995}. 

To introduce the concept of a semiflow following \parencite{BuzaHaller2025}, we let $X$ be a Banach space. A family of maps
$\{\varphi_t\}_{t\geq 0}$, $\varphi_t:X\to X$, is called a \emph{semiflow} on $X$ if
\[
\varphi_0=\mathrm{id}_X, 
\]
\[
\varphi_t\circ\varphi_s=\varphi_{t+s}
\qquad \text{for all } t,s>0,
\]
and $\{\varphi_t\}_{t\geq 0}$ is jointly continuous in space and time. It is known that DDEs, under mild assumptions, generate semiflows \parencite{Diekmann1995}.

\subsection{Existence of SSMs for nonlinear autonomous DDEs}\label{seciont2_Existence}
We briefly recall the setting for nonlinear autonomous delay differential equations and the main results from \parencite{BuzaHaller2025} that will be used in the examples below. We consider the phase space
\[
X := C([-\tau,0];\mathbb{R}^n),
\]
namely the Banach space of continuous $\mathbb{R}^n$-valued functions on $[-\tau,0]$, with the supremum norm
\[
\|\cdot\|_X := \|\cdot\|_\infty,
\]
where $\tau>0$ represents the maximal delay and $n\in\mathbb{N}$. Let $O\subset X$ be an open set, and let
$f:O\to\mathbb{R}^n$ be locally Lipschitz continuous. On $X$, we study autonomous delay differential equations of the form
\begin{subequations}\label{eq:IVP}
\begin{align}
\dot{x}(t) &= f(x_t), \label{eq:IVP-a}\\
x_0 &= u. \label{eq:IVP-b}
\end{align}
\end{subequations}
Here, for each $t$, the history segment $x_t\in X$ is defined by $x_t(\vartheta)=x(t+\vartheta)$, $\vartheta\in[-\tau,0]$, while $u\in O$ is the prescribed initial datum. A solution of \eqref{eq:IVP} is a map
\[
x:[-\tau,t^*)\to\mathbb{R}^n,\qquad t^*\in (0,\infty)\cup\{\infty\},
\]
such that \eqref{eq:IVP-b} is satisfied, $x$ is continuously differentiable on $(0,t^*)$, and \eqref{eq:IVP-a} holds throughout that interval. In addition, the right derivative of $x(t)$ at $t=0$ must exist and coincide with $f(x_0)$. Classic well-posedness results \parencite{Diekmann1995} show that, when $f$ is locally Lipschitz, every initial condition $u\in O$ gives rise to a unique solution of \eqref{eq:IVP}. We write $\{\varphi_t\}_{t\geq 0}$ for the semiflow induced by \eqref{eq:IVP}.

Assume now that the semiflow $\varphi$ has an equilibrium at the origin, i.e., $0\in O$ and $f(0)=0$. Let $A$ be the infinitesimal generator of the semigroup $t\mapsto D\varphi_t(0)$, and let $\Sigma\subset \sigma(A)$ be a nonempty bounded set of the spectrum with associated spectral projection $P_\Sigma$. For simplicity, we also assume that $f$ is of class $C^k$ for some $k\geq 1$. We will use the notation $\sigma(A)$ for the spectrum of a linear operator $A$, we will denote by $P_\Sigma$ the spectral projection associated with a bounded spectral subset $\Sigma \subset \sigma(A)$ and its image by $\operatorname{im}(P_{\Sigma})$.
\bigskip

\begin{theorem} \label{thm:main1}
\parencite{BuzaHaller2025}.
\textit{Suppose that a spectral subset $\Sigma$ is of the form
$\Sigma=\{\lambda\in\sigma(A)\mid \operatorname{Re}\lambda\geq \gamma\}$ for some $\gamma\in\mathbb{R}$. Then, there exists a neighbourhood $U \subset O$ of the stationary state
$0\in O$ and a locally invariant (under the semiflow $\varphi$) $C^{\ell}$ manifold $W^{\Sigma}\subset U$ (an
SSM) tangent to the spectral subspace $\operatorname{im}(P_{\Sigma})$ at $0$. Here the integer $1\leq \ell\leq k$ is
determined by the spectral gap condition}
\begin{equation}\label{eq:spectralGap}
\sup \operatorname{Re}\,(\sigma(A)\setminus \Sigma)< \ell \inf \operatorname{Re}\Sigma.
\end{equation}
\textit{Moreover, $W^{\Sigma}$ attracts trajectories that remain in $U$ at an exponential rate of $\gamma$,
synchronized along the leaves of a $C^0$ foliation.}
\end{theorem}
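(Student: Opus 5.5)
The plan is to reduce the statement to an abstract pseudo-unstable manifold theorem for $C^k$ maps on Banach spaces, applied to a time-$T$ map of the semiflow, and then to recover invariance under the continuous-time semiflow.

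\emph{Step 1: time-$T$ map and spectral splitting.} Fix $T>0$ and set $\Phi:=\varphi_T$. Since $f$ is $C^k$, standard DDE theory \parencite{Diekmann1995} gives that $\Phi$ is $C^k$ near $0$ with $D\Phi(0)=T(T):=e^{TA}$. For retarded equations the linearized semigroup is eventually compact. Hence $\sigma(A)\cap\{\operatorname{Re}\lambda\ge \gamma\}$ is a finite set of eigenvalues of finite multiplicity, and $\operatorname{im}(P_\Sigma)$ is finite dimensional. The spectral mapping theorem for eventually compact semigroups then gives $\sigma(e^{TA})\setminus\{0\}=e^{T\sigma(A)}$. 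We split $X=X_\Sigma\oplus X_{\rm s}$ with $X_{\rm s}=\ker P_\Sigma$. On these pieces we get the growth bounds: on $X_\Sigma$, backward-in-time decay of the form $\|e^{-tA}P_\Sigma\|\le Ce^{-(\inf\operatorname{Re}\Sigma-\epsilon)t}$; on $X_{\rm s}$, $\|e^{tA}(I-P_\Sigma)\|\le Ce^{(\beta+\epsilon)t}$ with $\beta=\sup\operatorname{Re}(\sigma(A)\setminus\Sigma)$. Here $\beta<\gamma$ because of the finiteness of the spectrum in the strip.

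\emph{Step 2: cut-off and Lyapunov--Perron construction.} Let $\chi$ be a smooth cut-off function; on $C([-\tau,0];\mathbb{R}^n)$ one has to use a $C^k$ bump on a finite-dimensional quotient or a smooth norm-like function, since the sup norm is not smooth. Using it, we modify $\Phi$ outside a small ball $U$ so that $\Phi=D\Phi(0)+N$ with $\mathrm{Lip}(N)$ small. We then look for the manifold as the graph of $h:X_\Sigma\to X_{\rm s}$, characterized as the set of initial points whose backward orbits (through the modified map) grow at most like $e^{-\eta n T}$ for some $\eta$ between $\beta$ and $\gamma$. The Lyapunov--Perron operator on the weighted sequence space is a contraction, which gives existence of a Lipschitz invariant graph tangent to $X_\Sigma$.

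\emph{Step 3: smoothness and the gap condition.} Regularity $C^\ell$ follows from fiber contraction, i.e.\ the $C^r$ section theorem, applied inductively to the formal derivatives. The $j$-th derivative equation involves the weighted rates $e^{j\eta}$ against $e^{\beta}$. It is solvable precisely when $\beta<j\,\inf\operatorname{Re}\Sigma$ for all $j\le\ell$, which is \eqref{eq:spectralGap}. This is the main obstacle. First, I would need $\Phi$ to be $C^\ell$ with uniformly continuous derivatives after the cut-off, which forces a careful choice of the smooth bump. Second, when $\inf\operatorname{Re}\Sigma\le 0$, the weights and the signs of the rates must be handled separately. I would first try the standard Irwin/Chen--Hale--Tan approach: work with the map version and verify the $C^r$ section theorem hypotheses, with the fiber contraction constant $e^{T\beta}/e^{Tj\eta}<1$.

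\emph{Step 4: flow invariance and foliation.} By uniqueness of the Lyapunov--Perron graph for a fixed gap, the graph for $\varphi_T$ coincides with the graph for $\varphi_{T/m}$ for every $m$. Combined with continuity in $t$, this yields local invariance under $\varphi_t$ for all $t$. Finally, the $C^0$ foliation is obtained as in invariant foliation theory. The leaf through $u$ is the set of points $v$ with $\|\varphi_t(u)-\varphi_t(v)\|=O(e^{\eta t})$ while both orbits stay in $U$, constructed by a second Lyapunov--Perron contraction. Each leaf meets the graph in exactly one point, which gives the exponential attraction at rate $\gamma$ (up to $\epsilon$) for trajectories remaining in $U$.
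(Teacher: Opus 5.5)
\textbf{Where your plan breaks: the cut-off.} Your overall architecture is the right one: a time-$T$ map, the manifold characterized by the growth of backward orbits (Chen--Hale--Tan), Irwin-type smoothness, and an invariant foliation from a second contraction. The paper does not reprove Theorem~\ref{thm:main1}; it quotes it from Buza and Haller. But Appendix~\ref{appendixC} proves the periodic analogue, Theorem~\ref{thm:main}, by exactly this template.

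The genuine gap is where you place the cut-off. You modify the map $\Phi=\varphi_T$ itself outside a ball. Invariant manifolds tangent to $\operatorname{im}(P_\Sigma)$ are in general not locally unique, so the global graph produced by the contraction depends on the modification. If $\varphi_T$ and $\varphi_{T/m}$ are cut off separately, then $\tilde\varphi_T\neq(\tilde\varphi_{T/m})^m$. Uniqueness for each modified map then says nothing about their graphs coinciding, so Step~4 does not yield invariance under $\varphi_t$.

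\textbf{The fix.} As in Appendix~\ref{appendixC}, cut off the nonlinearity of the equation, not the map. Use the sun-star formulation $x_t=T_0(t)u+\imath^{-1}\int_0^t T_0^{\odot*}(t-s)F(x_s)\,ds$. This framework is needed because $F$ takes values in $\mathbb{R}^n\times\{0\}\subset X^{\odot*}$. Replace the nonlinear part $R$ by $\chi_\rho(|P_\Sigma u|)\,\chi_\rho(|u-P_\Sigma u|)\,R(u)$. This gives a globally defined Lipschitz semiflow $\tilde\varphi_t$ with $\tilde\varphi_T=e^{TA}+N$, and $\lip(N)\to0$ as $\rho\to0$ by Gronwall.

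Invariance is then immediate. Since $\tilde\varphi_t$ commutes with $\tilde\varphi_T$ and is Lipschitz, it maps a backward $\tilde\varphi_T$-orbit obeying the growth bound to another such orbit, so $\tilde\varphi_t(G)\subset G$. The same argument maps the leaf through $u$ into the leaf through $\tilde\varphi_t(u)$. Your $T/m$ argument also works once all the maps come from one modified semiflow, but it is unnecessary.

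\textbf{Step 3: smoothness.} The fix above also settles your Step~3 obstacle, but your proposed resolution is unavailable. $C([-\tau,0];\mathbb{R}^n)$ has nonseparable dual, so it is not an Asplund space and admits no $C^1$ bump function, let alone a smooth norm-like function. A bump on a finite-dimensional quotient alone cannot make $\lip(N)$ small in the $\ker P_\Sigma$ direction. So the modified map cannot be globally $C^\ell$, and it does not need to be.

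The cut-off in the $\ker P_\Sigma$ direction is only Lipschitz; for the other cut-off, use a Euclidean norm on $\operatorname{im}P_\Sigma$. The modified nonlinearity is still $C^k$ on the open set where $|u-P_\Sigma u|<\rho$ or $|P_\Sigma u|>2\rho$. The graph of $h$ lies in this set because $\lip(h)<1/2$ and $h(0)=0$. Hence $\tilde\varphi_T$ is $C^k$ on a neighbourhood of the graph. That is all the fiber contraction uses, since it only evaluates derivatives of $\tilde\varphi_T$ at points of the graph.

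\textbf{The contraction rate.} In that fiber contraction, the backward rate on the manifold is $\inf\operatorname{Re}\Sigma-\epsilon$, not your threshold $\eta<\gamma$. With $\eta$, the constant $e^{T\beta}e^{-Tj\eta}$ can exceed $1$ even when \eqref{eq:spectralGap} holds. So take the threshold close to $\inf\operatorname{Re}\Sigma$; for small $\rho$ the graph does not depend on this choice within the gap.
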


Trajectories approaching an attracting SSM synchronize exponentially fast with trajectories on the SSM. For each on-SSM trajectory, the off-SSM initial conditions that converge to it most rapidly form its stable fiber. These fibers define a foliation over the SSM that is invariant under the semiflow, although individual leaves need not be invariant \parencite{Szalai2020, Haller2025ModelingControls, Bettini2025, BettiniEtAl2026ObliqueProjections}.

We emphasize that the sole assumption for the existence of a $C^1$ SSM $W^{\Sigma}$ is $f$ being $C^1$. A more detailed and precise treatment can be found in \parencite{BuzaHaller2025}.

\subsection{Existence of SSMs for nonlinear nonautonomous DDEs with periodic delay}\label{seciont2_ExistencePerDel}
We now recall the setting for nonautonomous DDEs with periodic delay and state the corresponding existence and regularity result. Its proof, together with further details, is provided in Appendix \ref{appendixC}. This result will be applied to a control example in which the periodic delay arises from ZOH.

Let $X : = C([-\tau,0];\mathbb{R}^n)$; here $\tau>0$ denotes the maximal delay and $n \in \mathbb{N}$.
Let $O \subset X$ denote an open set containing the origin. 
We consider systems of the form
\begin{subequations} \label{eq:DDE_classicala} 
\begin{align}
&\dot{x}(t) = f(t,x_t), \qquad t \geq s \in \mathbb{R}  \label{eq:DDE_classical1a} \\ 
& x_s = u,  \label{eq:DDE_classical2a}
\end{align}
\end{subequations}
for $f : \mathbb{R} \times O \to \mathbb{R}^n$ continuous, time-periodic, i.e., $f(t+p,u) = f(t,u)$ for some $p > 0$ and all $(t,u) \in \mathbb{R} \times O$, and $f(t,0) = 0$ for all $t \in \mathbb{R}$. This last requirement is usually achieved by mapping a nontrivial periodic orbit to the origin via a time-periodic change of coordinates. We suppose moreover that $f(t,\cdot)$ is $f(t,\cdot)$ is $C^k$ with $k \geq 1$ for all $t \in \mathbb{R}$.
We shall also suppose $(t,u) \mapsto D_2^jf(t,u)$ is continuous for all $j \leq k$. % this is needed for IMPFT, smoothness of Rhat

Equation \eqref{eq:DDE_classicala} determines a time-dependent semiflow $S:\mathcal{D}(S)\rightarrow X$ (see \eqref{eq:AIE}-\eqref{eq:81}). We denote by $U$ its linear part and by $\sigma(U)$ the associated Floquet spectrum.

To make the statement more concise, we set, for a subset $V\subset O$,
\begin{displaymath}
    \mathcal{U}(V) = \big\{ (t,s,u) \in \mathcal{D}(S) \; \big| \; S([t,s] \times \{s\} \times \{u \}) \subset V \big\},
\end{displaymath}
and $\mathcal{U}_{s,u}(V) = \{t \in [s,\infty) \; | \; (t,s,u) \in \mathcal{U}(V)\}$.
Moreover, let us denote
\begin{displaymath}
    X = X_\Sigma(t) \oplus X_{\Sigma'}(t) := \mathrm{im}(P_\Sigma(t))\oplus \ker(P_\Sigma(t)), \qquad t \in \mathbb{R},
\end{displaymath}
with projection $P_\Sigma$ as in Proposition~\ref{prop:spect}.

\begin{theorem} \label{thm:main}
Let $\Sigma = \{ \lambda \in \sigma(U) \; | \; |\lambda| > \gamma \}$ for some $\gamma > 0$, $\gamma \notin |\sigma(U)|$. The following hold.
        \begin{enumerate}[label=\upshape{(\roman*)}]
        \item \label{st1} \textup{(Invariant manifold)} There exists an open neighbourhood $V \subset O$ of $0$ and a continuous family of $C^1$ submanifolds $W^\Sigma_t \subset V$ tangent to $X_\Sigma(t)$ at $0$, which is locally invariant under $S$: If $u \in W^\Sigma_s$, then $S(t,s;u) \in W^\Sigma_t$ for all $(t,s,u) \in \mathcal{U}(V)$.
        \item \label{st2} \textup{(Smoothness)}  If  $k > 1$, the smoothness of $W^\Sigma_t$ is $C^\ell$ for all $t \in \mathbb{R}$, where $\ell$ is the maximal $\ell \leq k$ for which
        \begin{displaymath}
            (\sup|\sigma(U)\setminus \Sigma|)< (\inf|\Sigma|)^\ell
        \end{displaymath}
        holds. 
        \item \label{st3} \textup{(Attractivity)} There exist a continuous maps $\pi_t : V \to W^\Sigma_t$, $t \in \mathbb{R}$, such that 
        \begin{equation}
            \pi_t \circ S(t,s;u) = S(t,s; \pi_s (u)), \qquad \text{for all } (t,s,u) \in \mathcal{U}(V).
            \label{eq:foliation_invariance}
        \end{equation}
        Moreover, for any $u \in U$ and $s \in \mathbb{R}$, there exists $C > 0$ such that 
        \begin{equation}
            | S(t,s;u) - S(t,s;\pi_s(u))| < C e^{  (t-s) \ln(\gamma)/p}, \qquad \text{for all } t \in \mathcal{U}_{s,u}(V).
            \label{eq:foliation_decay}
        \end{equation}
        The preimages $\{\pi_t^{-1}(w)\}_{w \in W^\Sigma_t}$ are Lipschitz submanifolds foliating $V$ for each $t \in \mathbb{R}$.
    \end{enumerate}
\end{theorem}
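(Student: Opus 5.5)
The plan is to reduce the nonautonomous, $p$-periodic problem to an autonomous discrete-time problem for the period map, apply an invariant-manifold theorem for $C^k$ maps on Banach spaces, and then transport the result to all times using the time-dependent semiflow. First, I would establish that \eqref{eq:DDE_classicala} generates a time-dependent semiflow $S(t,s;u)$ through the abstract integral equation \eqref{eq:AIE}. By time-periodicity of $f$ we have $S(t+p,s+p;u)=S(t,s;u)$. The continuity of $(t,u)\mapsto D_2^jf(t,u)$, combined with the implicit function theorem applied to the fixed-point formulation, gives that $u\mapsto S(t,s;u)$ is $C^k$ near $0$, with derivative at $0$ equal to the linear evolution family $U(t,s)$. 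Writing $P_s:=S(s+p,s;\cdot)$ for the period (Poincar\'e) map, we get $DP_s(0)=U(s+p,s)$. This operator is compact once $p\geq\tau$, or a suitable iterate of it is, so its spectrum $\sigma(U)$ away from $0$ consists of isolated eigenvalues of finite multiplicity. It is also independent of $s$, since $U(s+p,s)$ and $U(s'+p,s')$ are conjugate through $U(s',s)$ on the nonzero spectrum. Proposition~\ref{prop:spect} then supplies the $p$-periodic spectral projections $P_\Sigma(t)$, which satisfy $U(t,s)P_\Sigma(s)=P_\Sigma(t)U(t,s)$. Because $\gamma\notin|\sigma(U)|$, we obtain a dichotomy: $U(s+p,s)$ restricted to $X_{\Sigma'}(s)$ has spectral radius $<\gamma$, while its restriction to $X_\Sigma(s)$ is invertible (finite-dimensional) with $\inf|\Sigma|>\gamma$.

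Next, I would fix $s=0$, localize the nonlinearity $P_0-DP_0(0)$ with a smooth cutoff, in the sup norm using an equivalent adapted norm, so that it becomes globally Lipschitz with small constant. I would then apply the standard pseudo-unstable (slow/strong-unstable type) manifold theorem for maps on Banach spaces, constructed via Lyapunov--Perron sequences with weight $\gamma^{-n}$, or equivalently by a graph transform over $X_\Sigma(0)$. This yields a $C^1$ invariant graph $W^\Sigma_0$ tangent to $X_\Sigma(0)$. Higher smoothness $C^\ell$ follows from the fiber contraction theorem under the gap condition $\sup|\sigma(U)\setminus\Sigma|<(\inf|\Sigma|)^\ell$, which is exactly the discrete analogue of \eqref{eq:spectralGap}. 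I would then define $W^\Sigma_t:=S(t,0;W^\Sigma_0)$ for $t\in[0,p]$ and extend periodically. Invariance of $W^\Sigma_0$ under $P_0$ together with the cocycle property makes this family consistent. Tangency to $X_\Sigma(t)$ comes from the intertwining relation. Continuity in $t$ follows from the joint continuity of $S$. For the dependence on the base time, the cleaner route is to redo the construction for each $s$ and use uniqueness of the cut-off manifold, so that smoothness holds for every $t$.

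For the attractivity statement (iii), I would construct the stable foliation of the cut-off period map: for each $w\in W^\Sigma_0$, the fiber $\pi_0^{-1}(w)$ is the set of $u$ with $\sup_n\gamma^{-n}|P_0^n u-P_0^n w|<\infty$. This is obtained as a fixed point of a contraction on weighted sequence spaces and is Lipschitz, giving a continuous map $\pi_0$. I would then set $\pi_t$ by transporting through $S(t,0;\cdot)$ and deduce \eqref{eq:foliation_invariance} from uniqueness of fibers. The bound \eqref{eq:foliation_decay} follows by interpolating the discrete rate $\gamma^n$ over intermediate times, using uniform Lipschitz bounds of $S(t,s;\cdot)$ for $t-s\le p$. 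This gives $Ce^{(t-s)\ln(\gamma)/p}$.

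I expect the main obstacle to be the time-transport step. $S(t,s;\cdot)$ is not invertible, so defining $W^\Sigma_t$ and $\pi_t$ for $t$ not a multiple of $p$, and proving that they are genuine $C^\ell$ submanifolds and genuine foliations, requires care. I would first try performing the Lyapunov--Perron construction directly for the continuous-time cocycle, with cutoffs applied at each $t$ compatible with periodicity, instead of pushing forward from $s=0$.
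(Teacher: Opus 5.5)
\textbf{Gap: the time-transport step.} Your architecture is the same as the paper's: period maps, a discrete invariant-manifold and foliation theorem, Irwin/fibre-contraction smoothness under the gap condition, and interpolation of $\gamma^n$ to get \eqref{eq:foliation_decay}. But the time-transport step you flag as the main obstacle is a genuine gap, and the devices you propose for it do not work.

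\emph{Defining $\pi_t$ by transport.} Given a $C^\ell$ manifold $W^\Sigma_0$, pushing it forward by $S(t,0;\cdot)$ can give (i)--(ii) near $0$, since $W^\Sigma_0$ is finite-dimensional and $U(t,0)$ maps $X_\Sigma(0)$ isomorphically onto $X_\Sigma(t)$. It cannot produce $\pi_t$ for $t\notin p\mathbb{Z}$. For $t>0$, every element of $S(t,0;V)$ is $C^1$ on $[-\min\{t,h\},0]$ ($h$ the maximal delay), so this image has empty interior. Transporting $\pi_0$ therefore defines $\pi_t$ only on a nowhere dense set, whereas (iii) needs $\pi_t$ on all of $V$.

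\emph{Redoing the construction at each base time.} Appealing to uniqueness does not help, because uniqueness holds only for a fixed modified map. The cut-off manifold and its fibres depend on the cutoff through orbits that leave the region where the cutoff is inactive: backward orbits on the manifold as soon as $\inf|\Sigma|<1$, and forward orbits for the fibres. Separately cut-off period maps $\widetilde P_s,\widetilde P_t$ are not intertwined by $S(t,s;\cdot)$. Their manifolds and fibres are therefore unrelated, and \eqref{eq:foliation_invariance} fails.

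\emph{The ``smooth cutoff in the sup norm''.} $C([-h,0];\mathbb{R}^n)$ is separable with nonseparable dual. It therefore admits no $C^1$ bump function and no equivalent norm smooth away from $0$, so such a cutoff does not exist. With a merely Lipschitz cutoff, fibre contraction gives no $C^1$ (let alone $C^\ell$) control at points whose backward orbits pass through the nonsmooth region.

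\textbf{The missing idea.} You need a cutoff that is coherent in time and smooth where it matters. The paper cuts off the nonlinearity $R$ in the integral equation at every time $\tau$ by $\chi_\rho(|P_\Sigma(\tau)u|)\,\chi_\rho(|u-P_\Sigma(\tau)u|)$. This is Lipschitz with constant small uniformly in $\tau$ (Proposition~\ref{prop:lentjes}), and it is smooth on a neighbourhood of graphs over $X_\Sigma(\tau)$ with small Lipschitz constant. The result is a single globally defined, Lipschitz, $p$-periodic cut-off semiflow $S_\rho$ (Lemma~\ref{lemma:Sper}). The discrete theorem is then applied to every $\Phi_t=S_\rho(t+p,t;\cdot)$ with one $\rho$.

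The manifolds $G_t(\gamma)$ and leaves $M_{t,u}(\gamma)$ are characterized by growth rates of backward semiorbits and forward orbits. Since $\Phi_t\circ S_\rho(t,s;\cdot)=S_\rho(t,s;\cdot)\circ\Phi_s$ and $S_\rho(t,s;\cdot)$ is Lipschitz, one gets $S_\rho(t,s;G_s(\gamma))\subset G_t(\gamma)$ and $S_\rho(t,s;M_{s,u}(\gamma))\subset M_{t,S_\rho(t,s;u)}(\gamma)$.

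The reverse inclusion needs no inversion of $S$. Take $v\in G_t(\gamma)$ with backward semiorbit $(v_n)$, and choose $j$ with $t-jp\le s<t-(j-1)p$. Then $u=S_\rho(s,t-jp;v_{-j})$ lies in $G_s(\gamma)$ and satisfies $S_\rho(t,s;u)=\Phi_t^j(v_{-j})=v$.

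Consequently $\pi_t$ is defined intrinsically at every $t$, as the intersection of the leaf with $G_t(\gamma)$. Smoothness follows because $\Phi_t$ is $C^k$ near $G_t(\gamma)$, and the local statements follow since $S_\rho=S$ near $0$. Your closing suggestion (cutoffs applied at each $t$ compatibly with periodicity) points exactly here, but the proposal does not carry it out.
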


\subsection{Data-driven approximation of SSMs of nonlinear autonomous DDEs with bounded delay }
We leverage the existence of finite-dimensional SSMs in the phase space of a DDE to construct data-driven approximations of such manifolds and their topologically conjugate reduced dynamics. Based on Takens' theorem \parencite{Takens1981}, a diffeomorphic copy of a $d-$dimensional smooth manifold is embedded with probability one in a finite-dimensional delay embedding space \(\mathbb{R}^k\) via delayed nondegenerate observables of dimension $k > 2d$.

Specifically, time series of observables are used through the SSMLearn algorithm \parencite{Cenedese2022} to reconstruct a manifold at \(\vartheta=0\), that is, at the present time, together with its \(\vartheta\)-independent reduced dynamics. The result is a low-dimensional system of ODEs that captures the dominant behavior after transient decay to the SSM, which takes place exponentially fast along the stable foliation over the attracting invariant manifold \parencite{BuzaHaller2025}.

As an advantage, reduction to an SSM enables backward-time advection of initial conditions, now represented as finite-dimensional vectors rather than history functions. This is not possible in the full phase space of a DDE, where the solution operator defines only a forward semiflow, since backward-time advection would require specifying future histories as initial conditions. An overview of data-driven SSM modeling for autonomous systems via delay embedding is provided in Appendix~\ref{appendixA}.

%% file: Sections/3_Section3.tex
\section{Methodology}\label{sec:methodologies}

We describe the data-driven procedures used in this paper to construct SSM-reduced models of delayed dynamical systems. Since the phase space of a delay differential equation (DDE) is infinite dimensional and is therefore not accessible from measurements, the reduction is performed in a finite-dimensional delay-coordinate embedding space constructed from observed time series. Within this space, we identify an SSM, learn the reduced dynamics, and validate the resulting model using trajectory prediction errors or, in the presence of chaotic dynamics, invariant quantities.

\subsection{Correlation dimension estimation from data}

For systems exhibiting chaotic dynamics, the dimension of the reduced model must be sufficiently high to embed the chaotic invariant set of interest. We estimate the fractal dimension of the attractor from data using the correlation dimension $m$ introduced by \parencite{GrassbergerProcaccia1983}. Since the full phase space of a DDE is infinite dimensional, we first construct a delay-coordinate embedding from a scalar observable \(x\), following \parencite{Takens1981}, and then estimate the correlation dimension in the resulting finite-dimensional delay-embedding space.

Choosing the embedding dimension $k$ to be at least twice the SSM dimension $d$, ensures that the SSM $W \subset X$ can be embedded into the delay space \parencite{Takens1981}.
Furthermore, assuming the chaotic attractor is contained in one of the SSMs in $X$, once in the delay embedding space we invoke \parencite{SauerYorkeCasdagli1991} to ensure that an orthogonal projection embeds the attractor into a $d$-dimensional spectral subspace, provided that $d > 2m$. This condition determines the sufficient, not necessary, dimension of the SSM.

Given \(N\) embedded data points \(x_i\), the correlation integral is defined as
\[
C(l)=\frac{1}{N^2}
\#\left\{(i,j): 1\le j<i\le N,\ \|x_i-x_j\|_2<l\right\}.
\]
For sufficiently small \(l\), the correlation integral scales as
\[
C(l) \sim l^m,
\qquad l\to 0.
\]
Hence, \(m\) is estimated via linear regression as the slope of the approximately linear scaling region in the log-log plane \((\log l,\log C(l))\).

It suffices to carry out this computation in the delay-embedding space, since \parencite{SauerYorkeCasdagli1991} guarantees that the delay coordinate map $F$ preserves the correlation dimension $m$ of the attractor $A$ in $X$, i.e., $\mathrm{dim}(F(A))=\mathrm{dim}(A)$.

We compare the correlation dimension estimated from trajectories in the delay-embedding space with that estimated after orthogonal projection onto the reduced coordinates to verify that no topological information is lost during the projection \parencite{KaszasHaller2025}.

\subsection{Lyapunov exponent estimation from data}
In the presence of chaotic dynamics, we estimate the leading Lyapunov exponent from data in order to validate the SSM-reduced model. The procedure follows \parencite{Nastac2017}, as also used in \parencite{KaszasHaller2025}. The leading Lyapunov exponent quantifies the average exponential separation rate of nearby trajectories, and its reciprocal provides an estimate of the predictability horizon of a chaotic solution.

Consider an autonomous system of the form \eqref{eq:IVP-a}, with initial
conditions \(x_0\) and \(x_0^*\) of the form \eqref{eq:IVP-b}, both close
to the attractor. Let
\[
\varepsilon(0)=x_0^*-x_0,
\qquad
\delta(0)=\|\varepsilon(0)\|_X
\]
be the initial perturbation and its norm in the phase space \(X\). The evolution of the perturbation can be locally
approximated by
\[
\varepsilon(t)=D\varphi_t(x_0)\varepsilon(0).
\]
Thus \[
\delta(t)
=
\|x^*(t)-x(t)\|_X
\approx
\|D\varphi_t(x_0)(x_0^*-x_0)\|_X.
\]
For sufficiently small perturbations, the dominant exponential growth rate
is characterized by
\[
\delta(t)\approx \delta(0)e^{\lambda t},
\]
where \(\lambda\) denotes the leading Lyapunov growth rate. If
\(\lambda>0\), nearby trajectories separate exponentially due to a
stretching mechanism, while the folding mechanism of the chaotic attractor
keeps trajectories bounded in a compact set.

For DDEs, distances in the full phase space are generally unavailable from data. We therefore compute separations in the delay-coordinate embedding space $\mathbb{R}^k$, using the Euclidean norm. \parencite{SauerYorkeCasdagli1991} justifies estimating the Lyapunov exponent from the image $F(A)$ of the attractor $A$. To estimate the leading Lyapunov exponent, we first advect an initial condition until the trajectory reaches the chaotic attractor. Denoting this state by \(x(t_{Ch})\), we then reset the time origin to \(t_{Ch}=t_0\). We initialize \(N\) nearby trajectories, typically \(N\approx 50\), in a small neighborhood of \(x(t_{Ch})\). For the full DDE, we prescribe these initial conditions as constant history functions with values chosen in a small ball around \(x(t_{Ch})\). All trajectories are then advanced forward in time.
For each pair of trajectories \(1\leq j<i\leq N\), we compute the separation
\[
\delta_{ij}(t)=\|x_i(t)-x_j(t)\|_2
\]
in the delay-embedding space. The mean normalized separation is then defined as
\[
\frac{\delta(t)}{\delta(0)}
=
\frac{1}{N_t}
\sum_{(i,j)}
\frac{\delta_{ij}(t)}{\delta_{ij}(0)},
\]
where \(N_t\) is the number of trajectory pairs. The leading Lyapunov exponent is obtained by linear regression of $\log\left(\frac{\delta(t)}{\delta(0)}\right)$ against \(t\), restricted to the initial time interval over which the separation grows approximately exponentially. For validation, we apply the same procedure both to trajectories generated by the full system, when the equations are available, and to trajectories predicted by the SSM-reduced model. When only experimental data are available, this estimation becomes more challenging because one usually has limited control over the initialization of nearby trajectories, especially for DDEs, whose initial conditions are history functions.

\subsection{Invariant measure estimation via probability density functions}
To further validate SSM-reduced models in the presence of chaotic dynamics, we compare statistical properties of the chaotic attractor. In particular, we estimate probability density functions (PDFs) of training trajectories in the reduced space and use them to estimate a physically relevant invariant measure of the chaotic attractor, following \parencite{Liu2024, Xu2024}. Agreement between the PDFs obtained from the full system and from the SSM-reduced model indicates that the reduced dynamics reproduces the long-term statistical behavior of the original system, even when test trajectory prediction is no longer meaningful due to high sensitivity to initial conditions.

\subsection{Systematic data-driven SSM reduction for autonomous DDEs}

We now summarize the data-driven procedure used to construct SSM-reduced models for autonomous DDEs with bounded delay.

\begin{itemize}
\item[\textbf{Step 1)}] \textbf{Verification of the hypothesis of Theorem~\ref{thm:main1}.}
For autonomous DDEs of the form introduced in Section~\ref{seciont2_Existence}, we first verify the assumptions of Theorem~\ref{thm:main1}. This theorem guarantees the existence of a finite-dimensional invariant manifold emanating from an equilibrium in the infinite-dimensional phase space \(X\), tangent to the selected spectral subspace. As long as the delayed system has
class $C^1$ governing equations, low-dimensional SSMs always exist with at least $C^1$ regularity. This follows from the fact that we can always choose the spectral subset $\Sigma$ in Theorem~\ref{thm:main1} containing the rightmost eigenvalues associated to eigenvectors spanning the spectral subspace $\operatorname{im}(P_{\Sigma})$ at $0$ with eq. \eqref{eq:spectralGap} satisfied with $\ell=1$.

\item[\textbf{Step 2)}] \textbf{Selection of the SSM dimension.}
The slow SSM dimension \(d\) is determined by the selected slow spectral subspace associated with \(\Sigma\). This is therefore determined by the configuration of the rightmost eigenvalues in the complex plane. In equation-free settings, this information may be inferred from data using, for example, spectrograms or the observed behavior of trajectories. For chaotic behaviour, assuming the chaotic attractor is contained in one of the SSMs in $X$, once in the delay embedding space we invoke \parencite{SauerYorkeCasdagli1991} to ensure that an orthogonal projection embeds the attractor into a $d$-dimensional spectral subspace, provided that \(d>2m\).

\item[\textbf{Step 3)}] \textbf{Assessment of SSM smoothness.}
After selecting \(d\), the smoothness of the SSM can be estimated from data by comparing the reduced dynamics on a \(d\)-dimensional SSM with that on a larger \(d^*\)-dimensional SSM, where \(d^*>d\). The enlarged spectral set \(\Sigma^*\) includes the slowest real eigenvalue or complex-conjugate pair outside \(\Sigma\). The spectral gap can then be estimated from the real parts of the eigenvalues inferred from the reduced models. The smoothness of the SSM is $\ell$ respecting the spectral gap condition \eqref{eq:spectralGap}. If unstable eigenvalues are present, the selected spectral subspace must include all unstable directions for an SSM to be attracting. If \(\Sigma\) contains only unstable eigenvalues, then the corresponding SSM coincides with the (strong) unstable manifold, it is thus unique, and it is as smooth as the full system by Theorem \ref{thm:main1}.
We emphasize that the smoothness of the SSM is not required explicitly for the data-driven SSM reduction to be carried out. Indeed, both the SSM parametrization and the reduced dynamics are approximated directly from data using multivariate polynomial regression in SSMLearn \parencite{Cenedese2022}. Consequently, no Taylor expansion needs to be computed.

\item[\textbf{Step 4)}] \textbf{Delay-coordinate embedding.}
Since the full phase space \(X\) is infinite dimensional and therefore inaccessible from data, we construct a finite-dimensional delay-coordinate embedding from the available observables, as in \eqref{delayEmbCoordMap}. This yields data in a delay-embedding space \(\mathbb{R}^k\), with \(k>2d\). In this space, we seek an embedded SSM \(\widetilde{\mathcal W(E)}\) that is diffeomorphic to an SSM \(\mathcal W(E)\subset X\), whose existence has been established in Step~1, by \parencite{Takens1981} as detailed in Appendix~\ref{delayEmb}.

\item[\textbf{Step 5)}] \textbf{Selection of the SSM polynomial approximation order.}
The \(\theta=0\) slice of an SSM, corresponding to the current-time component of history functions in $X$, is identified in the delay- embedding space by multivariate polynomial regression, as detailed in Appendix~\ref{ManFit}. When the delay-embedding lag \(\Delta t\) is small, trajectories in $\mathbb{R}^k$ tend to flatten \parencite{Haller2025ModelingVideos}, which often makes linear manifold approximations accurate. Indeed, for a delay-embedded scalar time series
\[
\mathbf y(t)
=
\begin{pmatrix}
s(t)\\
s(t+\Delta t)\\
\vdots\\
s(t+(k-1)\Delta t)
\end{pmatrix}
\in \mathbb{R}^k,
\]
a Taylor expansion around $\Delta t = 0$ gives
\[
\mathbf y(t)
=
\sum_{\ell=0}^{m}
\frac{\Delta t^\ell}{\ell!}
s^{(\ell)}(t)\mathbf v_\ell
+
\mathcal O\!\left((k-1)^{m+1}\Delta t^{m+1}\right),
\qquad
\mathbf v_\ell
=
\begin{pmatrix}
0^\ell\\
1^\ell\\
\vdots\\
(k-1)^\ell
\end{pmatrix}.
\]
Therefore, up to an error of order $\mathcal O\!\left((k-1)^{m+1}\Delta t^{m+1}\right)$, the trajectory lies in the subspace
$\mathcal P_m
=
\operatorname{span}\{\mathbf v_0,\mathbf v_1,\ldots,\mathbf v_m\}
\subseteq \mathbb{R}^k$.
This yields a hierarchy of linear approximating subspaces, $\mathcal P_0
\subset
\mathcal P_1
\subset
\cdots
\subset
\mathbb{R}^k$. In particular, the first-order approximation gives
\[
\mathbf y(t)
=
s(t)\mathbf v_0
+
\Delta t \dot{s}(t)\mathbf v_1
+
\mathcal O\!\left((k-1)^2\Delta t^2\right),
\]
so for sufficiently small \(\Delta t\), the embedded trajectory lies close to the two-dimensional plane
\[
\mathcal P_1
=
\operatorname{span}
\left\{
\begin{pmatrix}
1\\
1\\
\vdots\\
1
\end{pmatrix},
\begin{pmatrix}
0\\
1\\
\vdots\\
k-1
\end{pmatrix}
\right\}.
\]
For larger embedding lags, higher-dimensional linear subspaces may be required to obtain more accurate approximations.

\item[\textbf{Step 6)}] \textbf{Learning the reduced dynamics.}
The reduced dynamics is approximated from data in the reduced space using multivariate polynomial regression, or radial basis function (RBF) interpolation in the presence of chaotic dynamics when this yields better performance, as detailed in Appendix~\ref{RedDynFit}. The graph-style parametrization of the SSM provides a one-to-one lifting from the reduced coordinates to the delay-coordinate embedding space. This avoids introducing artificial self-intersections in the reconstructed manifold and thereby preserves the uniqueness of solutions for smooth autonomous DDEs \parencite{Diekmann1995}.

When polynomial reduced dynamics are used, the polynomial order \(n_{\mathrm{red}}\) is selected together with the SSM approximation order \(n_{\mathrm{SSM}}\) by minimizing the normalized mean trajectory error (NMTE) on test data. For a test trajectory \(\mathbf y_{\mathrm{test}}\) and its SSM-based prediction \(\mathbf y_{\mathrm{model}}\), the NMTE is defined \parencite{Cenedese2022} as
\[
\mathrm{NMTE}(n_{\mathrm{SSM}\star},n_{\mathrm{red}\star})
=
\frac{1}{N_p \|\mathbf{y}\|_{\max}}
\sum_{j=1}^{N_p}
\left\|
\mathbf{y}_{\mathrm{test}}^j(t_j)
-
\mathbf{y}_{\mathrm{model}}^j(t_j;n_{\mathrm{SSM}\star},n_{\mathrm{red}\star})
\right\|.
\] 
The selected orders are then
\[
(n_{\mathrm{SSM}},n_{\mathrm{red}})
=
\arg\min_{n_{\mathrm{SSM}\star},n_{\mathrm{red}\star}}
\mathrm{NMTE}(n_{\mathrm{SSM}\star},n_{\mathrm{red}\star}),
\]
where $N_p$ is the number of points of the time series. Too high polynomials lead to overfitting, while too low polynomials lead to underfitting. For chaotic systems, short-time trajectory prediction is not a reliable validation criterion due to high sensitivity to initial conditions. In that case, \(n_{\mathrm{red}}\) is selected based on the ability of the reduced model to reproduce the statistics of the chaotic attractor, as presented in the next Step 7).

\item[\textbf{Step 7)}] \textbf{Validation of the SSM-reduced model.}
For nonchaotic dynamics, model quality is assessed using the NMTE on test trajectories. Each test trajectory is orthogonally projected onto the reduced coordinates to approximate the base point of the leaf of the foliation, the reduced initial condition is advected under the learned reduced dynamics, and the resulting prediction is lifted back to the delay-coordinate embedding space using the learned SSM parametrization. For better approximation of leaves' base points see the works \parencite{Bettini2025} on linear oblique projections, and \parencite{BettiniEtAl2026ObliqueProjections} on nonlinear oblique projections.

For chaotic dynamics, pointwise trajectory prediction is meaningful only over short times because of sensitive dependence on initial conditions. Therefore, we validate the reduced model by comparing PDFs of training trajectories in the reduced coordinates, comparing leading Lyapunov exponents, and qualitatively assessing whether test trajectories of the reduced model reconstruct the chaotic attractor in the delay-coordinate embedding space.
\end{itemize}

\subsection{Systematic data-driven SSM reduction for nonautonomous DDEs with periodic delay}

For nonautonomous DDEs with periodic delay, the data-driven reduction procedure is performed in a Poincar\'e section. In this section, the dynamics is described by an autonomous discrete-time map, i.e. the Poincar\'e map. The reduction procedure is therefore analogous to Steps 1)--7) above for autonomous DDEs, with the following modifications.

\begin{itemize}

\item[\textbf{Step 1)}] 
The assumptions of Theorem~\ref{thm:main} are verified instead of those of Theorem~\ref{thm:main1}. For \(C^1\) periodic-delay DDEs, this theorem guarantees the existence of a time-periodic family of \(C^1\) SSMs tangent to a time-periodic family of slow spectral subspaces.

\item[\textbf{Step 2)}]
The smoothness of the SSM is estimated using the eigenvalues of the discrete map approximating the Poincar\'e map, rather than the eigenvalues of a continuous-time SSM-reduced model.

\item[\textbf{Step 6)}]
The reduced dynamics in the Poincar\'e section is learned as a discrete-time reduced map. As in the autonomous case, this map may be approximated using either polynomial regression or radial basis function interpolation.

\end{itemize}

%% file: Sections/4_Section4.tex
\section{Data-driven SSM reduction for delay differential equations}\label{section4}
In this section, we apply Theorem~\ref{thm:main1} to examples of increasing complexity drawn from \parencite{BredaEtAl2025DDMethods, SzakszOroszStepan2024Traffic, BuzaHaller2025}. We begin with the Hutchinson equation \parencite{Hutchinson1948, BredaEtAl2025DDMethods}, a scalar DDE with a single discrete delay that exhibits a stable limit cycle for a suitable choice of parameters. For this system, we first derive an SSM-reduced model through the formal equation-driven calculations of \parencite{Szaksz2025SpectralSystems}, based on \parencite{Diekmann1995}, as detailed in Appendix~\ref{SectionB1}. We then show in Appendix~\ref{B2} that this result is equivalent to the equation-based formulation of \parencite{BuzaHaller2025}, which relies on the rigorous results summarized in Theorem~\ref{thm:main1}.

The equation-driven SSM reduction is used here only as a benchmark to identify the limitations that the data-driven approach aims to overcome. We therefore compare the equation-driven results with a data-driven SSM reduction obtained by applying SSMLearn \parencite{Cenedese2022} directly to trajectories generated by the governing equations, following the procedure outlined in Section~\ref{sec:methodologies}. This comparison highlights the limitations of equation-based SSM reduction and motivates the use of data-driven SSM methods.

We then analyze several numerically simulated systems with multiple discrete and distributed bounded delays, including examples exhibiting chaotic dynamics. We find that the SSM-based approach captures both local and global bifurcations, consistently with the results reported in \parencite{AbbascianoEndreszStepanHaller2026JSV, KingEtAl2026ParametricSSM}. These examples also illustrate some limitations of existing data-driven reduction approaches for DDEs discussed in \parencite{BredaEtAl2025DDMethods}. In particular, we obtain SSM-reduced ODE-models with predictive power, without prior knowledge of the specific form of the DDE, including the number and magnitude of discrete and distributed delays.

Finally, we apply Theorem~\ref{thm:main}, the new theoretical result established in this paper for nonautonomous DDEs with periodic delay, to experimental control data affected by ZOH and quantization effects.

\subsection{Hutchinson DDE - Equation-driven vs. data-driven reduced modeling}
A powerful equation-driven reduction method in the context of DDEs have been developed in \parencite{Diekmann1995} and extended to SSMs in \parencite{Szaksz2025}, whose existence for DDEs has recently been rigorously established by \parencite{BuzaHaller2025}. Two equivalent approaches have been proposed by \parencite{Szaksz2025SpectralSystems} and \parencite{BuzaHaller2025} to compute finite-dimensional SSMs emanating from equilibria of delay equations in an infinite dimensional phase space.
The core idea is to approximate the solution of the invariance PDE via arbitrarily high-order truncated Taylor expansions. This leads to homological equations, that can be solved as linear algebraic systems whose solutions yield the coefficients of both the SSM parametrization and the reduced dynamics up to the desired polynomial order. These constructions require sufficient smoothness of the DDE right-hand side and generic nonresonance conditions on the spectrum of the linearization, as detailed in \parencite{Szaksz2025SpectralSystems, BuzaHaller2025}. However, the amount of computations required by these equation-driven SSM reduction approaches is high and increases rapidly with both the dimension of the SSM and the truncation order, as the number of coefficients grows combinatorially. We apply both methods to the Hutchinson delay system, as detailed in Appendix~\ref{appendixB}, reducing it to a two-dimensional (2D) SSM to motivate the need for data-driven methods in real-world applications, where the underlying equations are unknown and parameter estimation is uncertain.

The Hutchinson DDE is of the form
\begin{equation}\label{eq:hutch1}
x'(t)=r\,x(t)\left(1-\frac{x(t-\tau)}{K}\right).
\end{equation}
as discussed in \parencite{Hutchinson1948}. After shifting the equilibrium \(x_*=K\) to the origin, the state at time \(t\) is represented by the history segment
\[
y_t(\vartheta)=y(t+\vartheta), \qquad \vartheta\in[-\tau,0],
\]
defined on the phase space \(X:=C([-\tau,0],\mathbb{R})\), as detailed in Appendix \ref{appendixB}. Throughout, we use the parameter values \(r=1.8\), \(K=10\), and \(\tau=1\).

The dominant modes arise from an unstable complex conjugate pair of eigenvectors of the linearized system about the origin equilibrium. Applying Theorem~\ref{thm:main1}, and choosing $\gamma>0$, this ensures the existence of an SSM as smooth as the full system, thus $C^\infty$.
The spectrum in Figure \ref{fig:SpectralGap_H} is computed using the toolbox developed in \parencite{AppeltansSilmMichiels2022, AppeltansMichiels2023}, a root-finding algorithm for quasi-polynomials arising from the characteristic equations of DDEs.

\begin{figure}[H]
  \centering
  \includegraphics[width=0.6\textwidth]{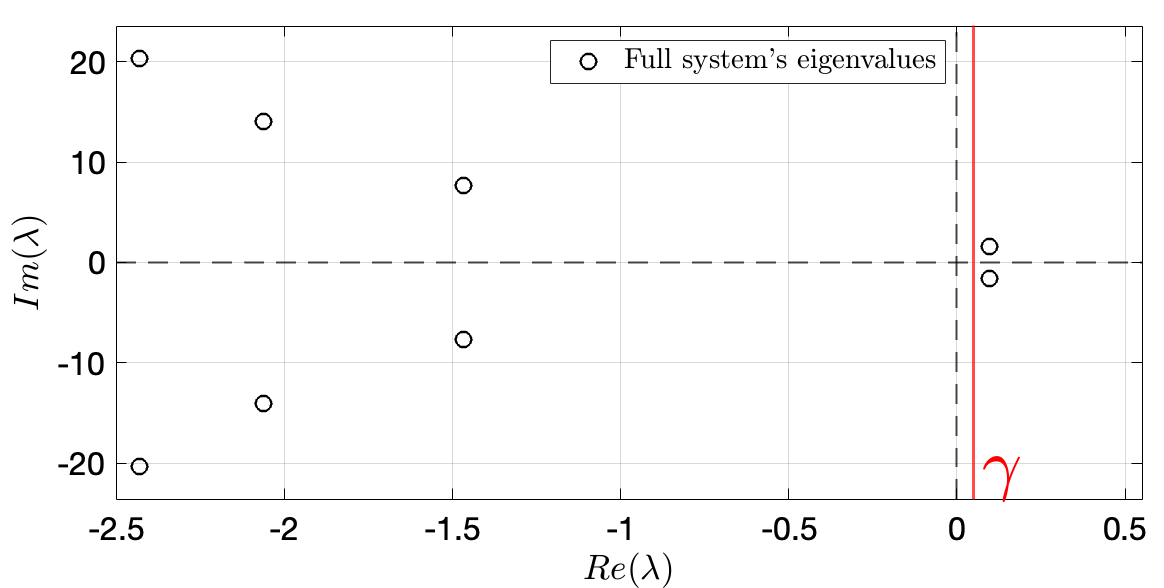}
  \caption{Eigenvalues of the linearization of the full system \eqref{eq:hutch1} at the fixed point at the origin (black markers). The red vertical line indicates the value $\gamma$ chosen to satisfy the assumptions of Theorem~\ref{thm:main1}.}
  \label{fig:SpectralGap_H}
\end{figure}

The SSM parametrization and the reduced dynamics up to third order, computed via the procedure outlined in \parencite{Szaksz2025}, are given by

\begin{equation}\label{eq:eqManifold}
\begin{aligned}
W(\vartheta = 0, z, \bar z)
&= z + \bar z \quad + \frac{1}{2}\big((0.0046 - 0.021 i)\, z^2+
(0.0024)\, z\bar z
+(0.0046 + 0.021 i)\, \bar z^2\big) \\
&\quad + \frac{1}{6}\big((-0.0019 + 0.0025 i)\, z^3+(-0.0017 + 0.0026 i)\, z^2\bar z \\
&\quad + (-0.0017 - 0.0026 i)\, z\bar z^2
+ (-0.0019 - 0.0025 i)\big)\, \bar z^3   + \mathcal{O}(|z|^4),
\end{aligned}
\end{equation}

\begin{equation}\label{eq:eqRD}
\begin{aligned}
\dot z &=
(0.097 - 1.6i)\,z \\
&\quad + (0.072 - 0.042i)\,z^{2}
+ (0.0055 + 0.0082i)\,z\bar z
- (0.066 - 0.050i)\,\bar z^{2} \\
&\quad - (0.0039 + 0.00073i)\,z^{3}
- (0.0034 - 0.00054i)\,z^{2}\bar z
+ (0.0018 - 0.0030i)\,z\bar z^{2} \\
&\quad + (0.00080 - 0.0039i)\,\bar z^{3} + O(|z|^4),
\end{aligned}
\end{equation}

\begin{equation}\label{eq:eqRDconj}
\begin{aligned}
\dot{\bar z} &=
(0.097 + 1.6i)\,\bar z \\
&\quad - (0.066 + 0.050i)\,z^{2}
+ (0.0055 - 0.0082i)\,z\bar z
+ (0.072 + 0.042i)\,\bar z^{2} \\
&\quad + (0.00080 + 0.0039i)\,z^{3}
+ (0.0018 + 0.0030i)\,z^{2}\bar z
- (0.0034 + 0.00054i)\,z\bar z^{2} \\
&\quad - (0.0039 - 0.00073i)\,\bar z^{3} + O(|z|^4).
\end{aligned}
\end{equation}

In Fig.~\ref{fig:SSMsVsTheta}, we plot the objects depicted in a three-dimensional observable space. The left panel shows the spectral subspace, an SSM obtained from the second-order Taylor truncation of the manifold parametrization, and a trajectory of the full system approaching the limit cycle shown in red. This limit cycle is obtained by lifting the prediction of the SSM-reduced model. The right panel shows SSM slices for three values of $\vartheta$ in the phase space $X$. Each red line represents a state, i.e., a history function in $X$, while markers indicate intersections of the states with the SSM slices. Advecting the state generates trajectories evolving from initial conditions near the unstable fixed point at the origin toward a stable limit cycle. A marker on the SSM at $\vartheta=0$ corresponds to the marker evaluated one second later on the SSM at $\vartheta=-1$, since for a DDE the state satisfies
\[
y_{t+1}(\vartheta)
= y(t+1+\vartheta)
= y\bigl(t+(\vartheta+1)\bigr)
= y_t(\vartheta+1).
\]
This relation holds up to an $O(|z|^4)$ correction on the manifold, as we use a third-order Taylor expansion to approximate the spectral submanifold near the origin. Time-shift invariance is incorporated into the manifold through the interior equations in Appendix~\ref{SectionB1}.

The SSM that we are going to be approximate from data using SSMLearn~\parencite{Cenedese2022} corresponds to the slice at $\vartheta=0$ (yellow) in Fig.\ref{fig:SSMsVsTheta_b}, representing the invariant manifold of states, now the red markers  $\vartheta=0$ at the current time $y_t(0)=y(t)$. Slices with $\vartheta<0$ correspond to time-shifted states and thus provide backward-shifted predictions. 

\begin{figure}[H]
  \centering

  \begin{subfigure}{0.49\textwidth}
    \centering
    \includegraphics[width=\linewidth]{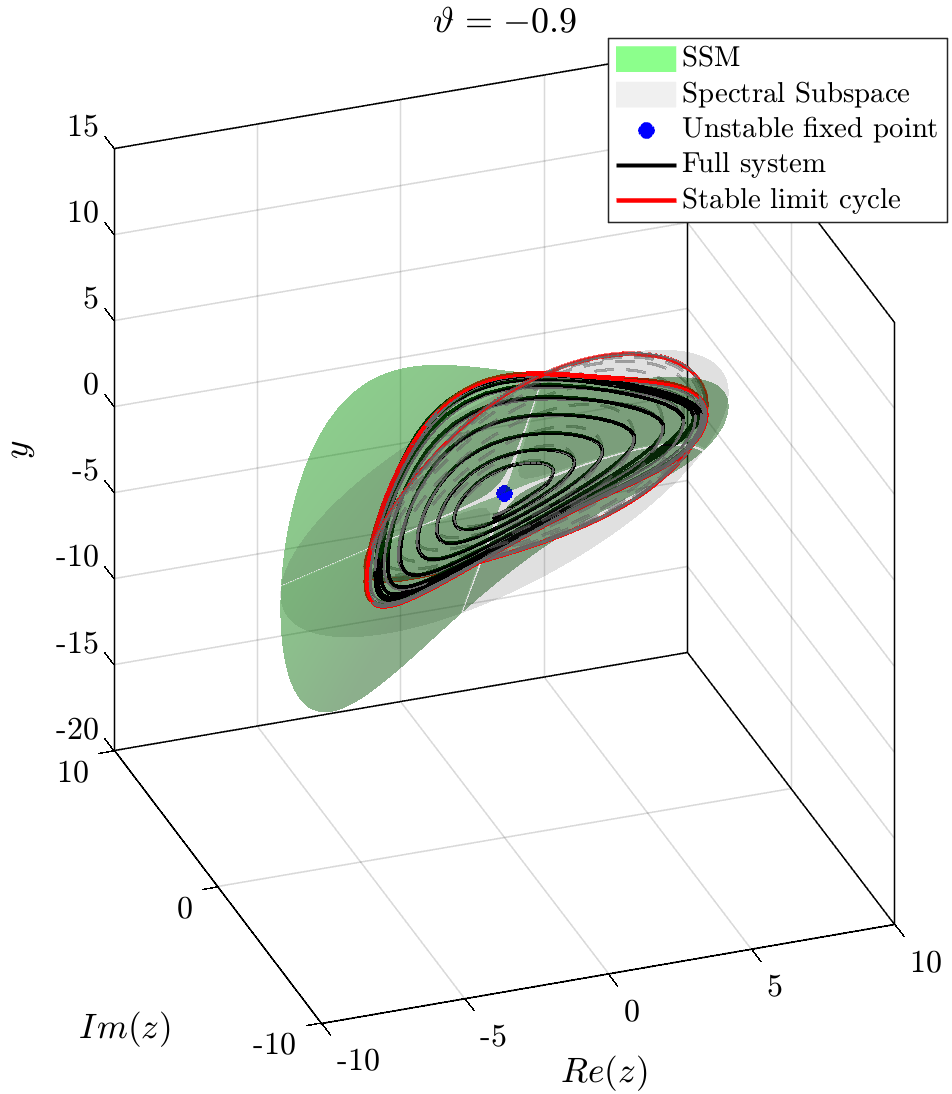}
    \caption{}
    \label{fig:SSMsVsTheta_a}
  \end{subfigure}
  \hfill
  \begin{subfigure}{0.49\textwidth}
    \centering
    \includegraphics[width=\linewidth]{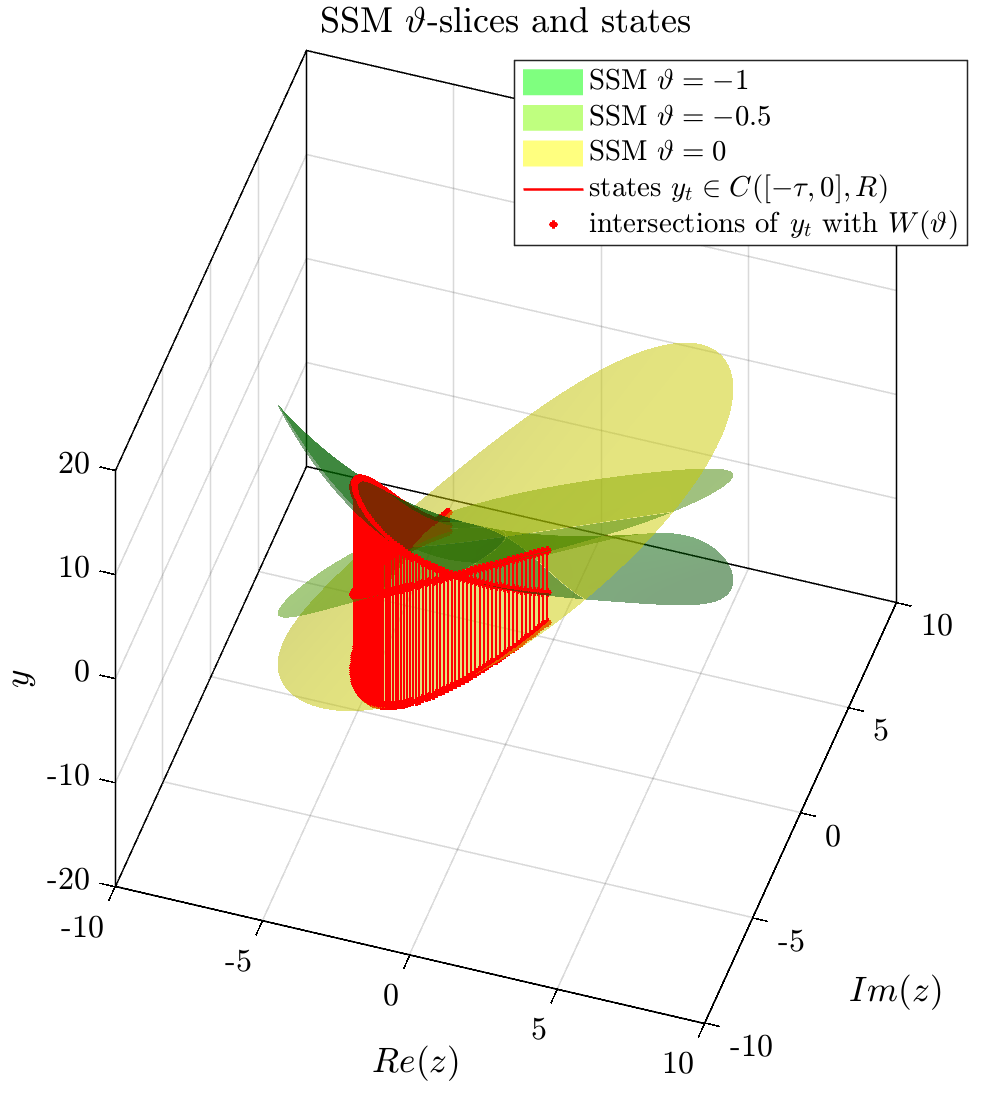}
    \caption{}
    \label{fig:SSMsVsTheta_b}
  \end{subfigure}
    \caption{(a): Comparison between a full-system trajectory (black) of system \eqref{eq:hutch1} and the SSM-reduced model \eqref{eq:eqManifold}-\eqref{eq:eqRDconj} prediction (grey) obtained projecting the full trajectory's initial condition onto the spectral subspace via the pairing, lifted with the cubic equation-driven SSM parametrization at $\vartheta=-0.9$. (b): Equation-driven SSM slices for different values of $\vartheta$ (yellow: $\vartheta=0$, medium green: $\vartheta=-0.5$, dark green: $\vartheta=-1$). Red curves denote the states in $X$, i.e. hystory functions, and red markers indicate their intersections with each slice.}
  \label{fig:SSMsVsTheta}
\end{figure}

Projecting the initial condition of a numerically advected full system trajectory onto the spectral subspace using the pairing \eqref{eq:pairing} enables a linear approximation of the base point of the nonlinear leaf of the stable foliation over the finite-dimensional dominant spectral submanifold. The reduced initial condition is then evolved using the SSM-reduced model and lifted via the manifold parametrization at $\vartheta=0$ in Figure \ref{fig:Heq_TrajPred} to compare full-system simulations with equation-driven SSM predictions.

\newcommand{\imgAAwidth}{0.49\textwidth}
\newcommand{\imgBBwidth}{0.49\textwidth}

\begin{figure}[H]
  \centering

  \begin{subfigure}[c]{\imgAAwidth}
    \centering
    \includegraphics[width=\linewidth]{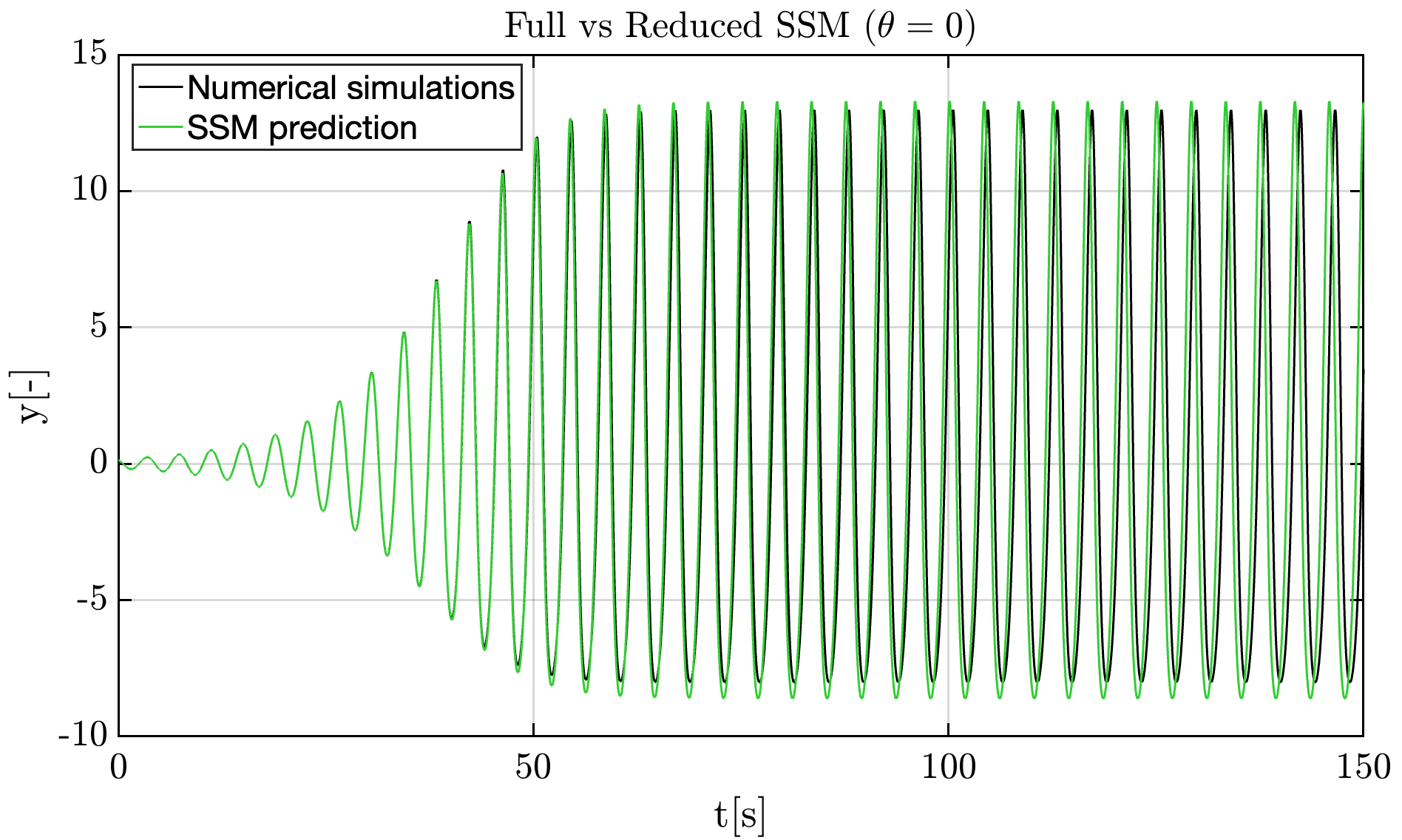}
    \caption{}
    \label{fig:H_Traj}
  \end{subfigure}
  \hfill
  \begin{subfigure}[c]{\imgBBwidth}
    \centering
    \includegraphics[width=\linewidth]{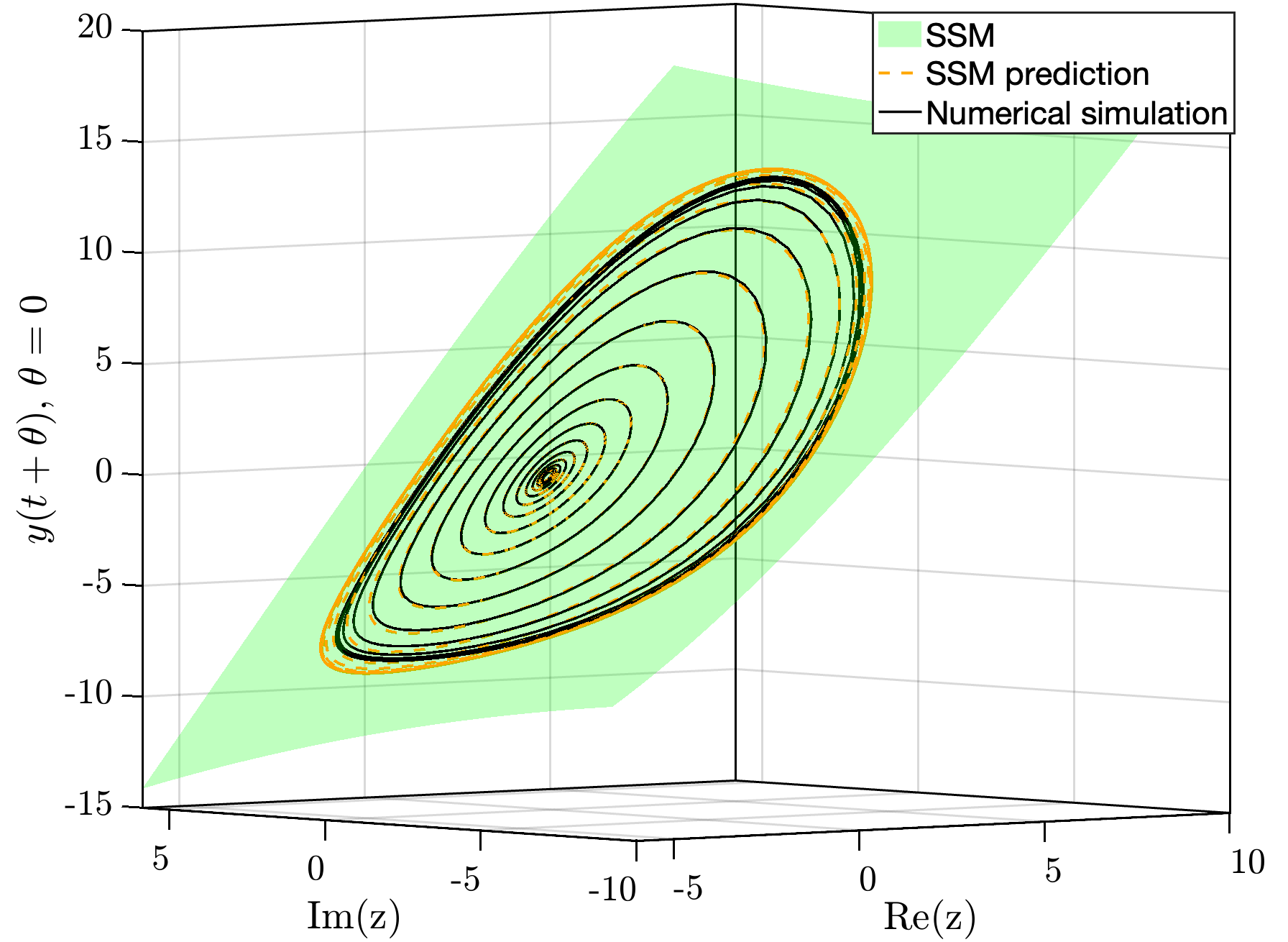}
    \caption{}
    \label{fig:SSM}
  \end{subfigure}
  \caption{(a): Comparison between a trajectory of system \eqref{eq:hutch1} and the corresponding equation-driven SSM-reduced model prediction. (b): Reconstructed SSM used to lift reduced dynamics into the full space.}
  \label{fig:Heq_TrajPred}
\end{figure}   

Prediction errors in the limit-cycle amplitude are observed, while a frequency error induces a phase discrepancy that increases toward the end of the time series (Fig.~\ref{fig:H_Traj}). These errors can be reduced by increasing the truncation order of the reduced dynamics, at the cost of a higher-order manifold expansion.

A data-driven approach can accelerate this process while eliminating the need for explicit knowledge of the governing delay equations. More generally, when higher-dimensional SSMs are required, the computational cost of equation-driven reductions increases substantially. In an equation-driven setting, deriving higher-order reduced dynamics typically requires computing the SSM parametrization to sufficiently high order, at least one order below that of the reduced dynamics, in order to solve the corresponding homological equations. This requirement is avoided in the proposed data-driven approach, where the approximation orders of the SSM and of the reduced dynamics can be selected independently. Furthermore, the data-driven approach does not require the explicit verification of nonresonance conditions, since the SSM is fitted directly using polynomial regression. Most importantly, a data-driven approach is particularly advantageous for real-world systems, where the governing equations and parameter values are rarely known with high accuracy. Identifying the number and magnitude of delays from data is even more challenging, especially in the presence of distributed delays. 

To address these limitations identified with equation-driven SSM reduction, we propose in this paper a data-driven SSM-reduction approach for DDEs, and start by applying it to the Hutchinson delay equation \eqref{eq:hutch1} following the procedure described in Section \ref{sec:methodologies}. Numerical data are generated from the DDE \eqref{eq:hutch1} with the same parameter values used for the equation driven reduction.

Finite-dimensional SSMs exist under generic conditions, i.e., $f$ in \eqref{eq:IVP} being $C^1$ \parencite{BuzaHaller2025}. In this example, inspection of a spectrogram of the initial transient indicates that the dominant eigenvalues form an unstable complex conjugate pair, ensuring the existence of a 2D $C^\infty$ SSM $W$ in $X$. Since the phase space $X$ is infinite-dimensional, it cannot be fully observed. We therefore leverage Takens’ theorem \parencite{Takens1981}, which, under generic nondegeneracy conditions on the dynamics and observables, guarantees the existence of a copy \(\tilde W\) diffeomorphic to $W$, i.e., an embedding of the selected SSM \(W\), in a delay-coordinate space of dimension $k$ strictly greater than twice that of the SSM $d$.

To embed the 2D SSM capturing the dominant dynamics, we construct a delay-embedding space of dimension at least $5$, using $y$ as the observable. We approximate $\tilde W$, the embedding of the smooth SSM $W$, directly from data via polynomials. In this particular case, being the slow manifold built on all the unstable linear modes, the SSM $W$ coincides with the unstable manifold emanating from the equilibrium, and hence is unique in any smoothness class. The eigenvalues of the reduced dynamics correctly capture the two dominant eigenvalues of the full system, computed using the root-finding toolbox \parencite{AppeltansSilmMichiels2022, AppeltansMichiels2023} (see Fig.~\ref{fig:Eigenvalues_H}).

\begin{figure}[H]
  \centering
  \includegraphics[width=0.6\textwidth]{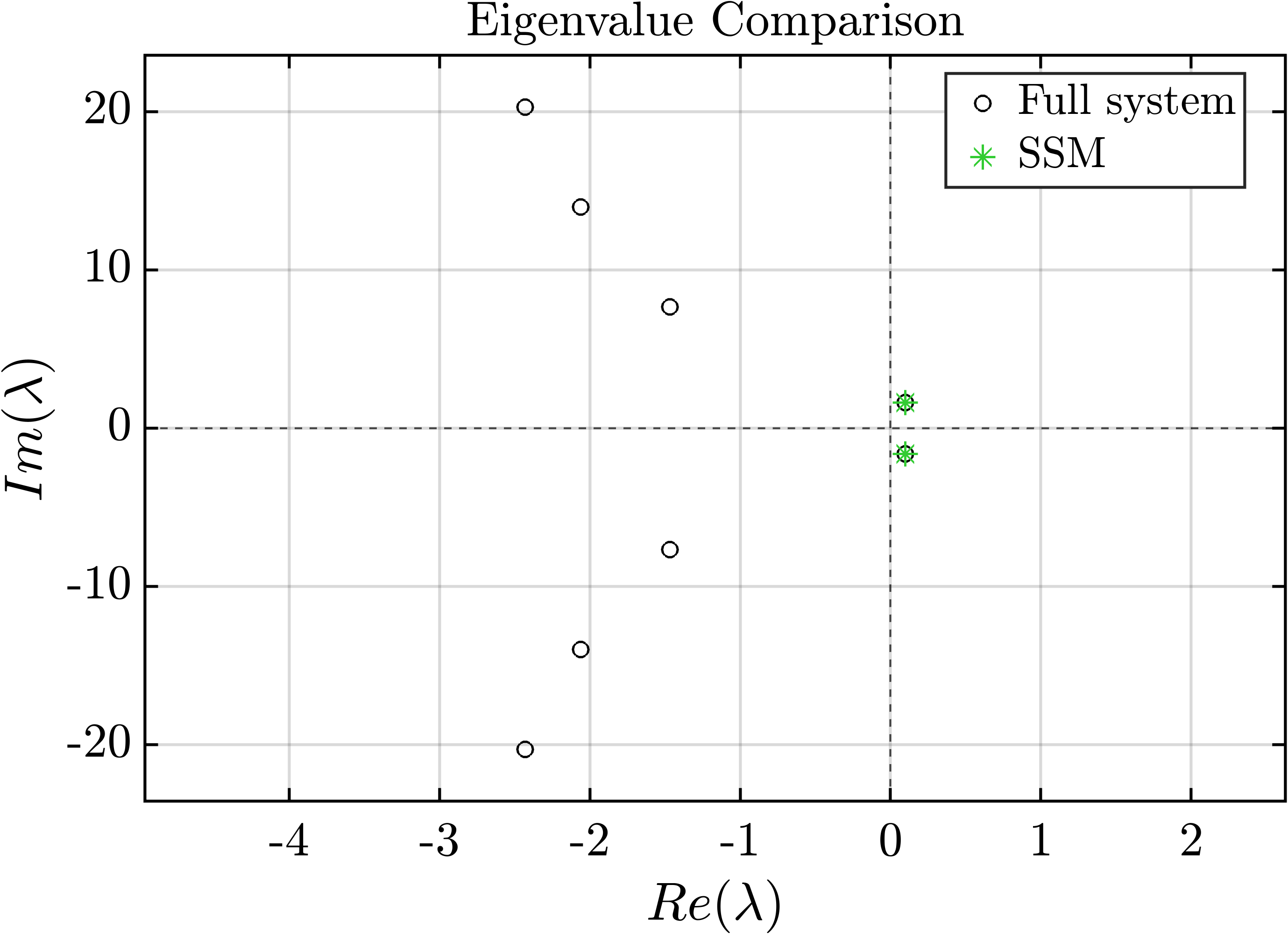}
  \caption{Comparison between true dominant eigenvalues of the data-driven 2D SSM-reduced dynamics of system \eqref{eq:hutch1} (green) and those from the full system obtained via root-finding \parencite{AppeltansSilmMichiels2022, AppeltansMichiels2023} (black).}
  \label{fig:Eigenvalues_H}
\end{figure}

\newcommand{\imgAwidth}{0.65\textwidth}
\newcommand{\imgBwidth}{0.35\textwidth}

Six training trajectories are sufficient to construct a predictive fifth-order SSM-reduced ODE model that captures the dominant dynamics of the underlying delay equation on an SSM approximated with cubic polynomials. The cubic SSM approximation significantly improves prediction accuracy as the delay-embedding lag \(\Delta t=5\) was selected to further reduce the NMTE. The initial conditions of the training trajectories, generated via numerical simulation of the DDE, are placed near the unstable anchor fixed point to accurately reconstruct unstable directions. These trajectories are eventually attracted to the stable limit cycle as they lie within its domain of attraction.

We evaluate the predictive power of the SSM model on previously unseen trajectories in Figure \ref{fig:TestTraj_H} and obtain a normalized mean trajectory error (NMTE) \parencite{Cenedese2022} of $1.7\%$.

\begin{figure}[H]
  \centering
  \includegraphics[width=0.8\textwidth]{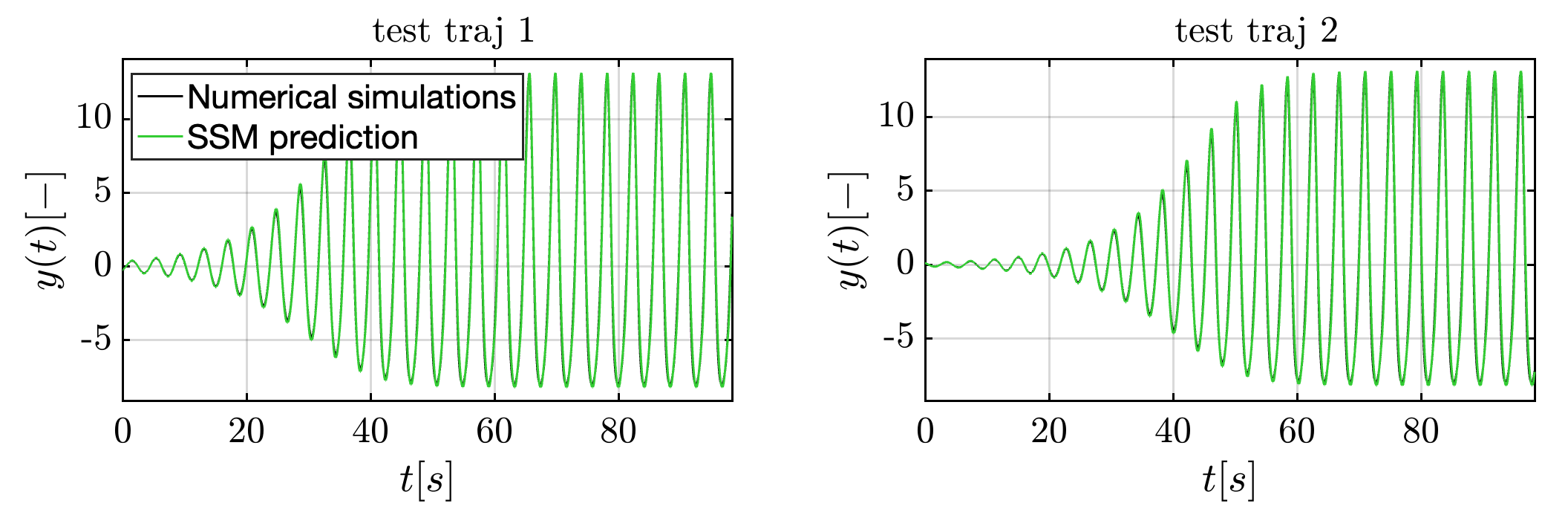}
  \caption{Comparison between two test trajectories of the full system \eqref{eq:hutch1} (black) and their data-driven SSM-based predictions (green).}
  \label{fig:TestTraj_H}
\end{figure}

Data-driven SSM reduction seeks an optimal polynomial approximation of a diffeomorphic copy of $W(z,\bar z,\vartheta=0)$ directly from data at $\vartheta=0$, rather than recovering the Taylor expansion derived from an equation-driven formulation. Figure~\ref{fig:reducedEquations_H} shows a three-dimensional projection of the regressed SSM, a test trajectory with its SSM-based prediction, and the corresponding polynomial reduced dynamics.

\begin{figure}[H] 
  \centering
  \begin{minipage}[c]{\imgAwidth}
    \centering
    \includegraphics[width=1\linewidth]{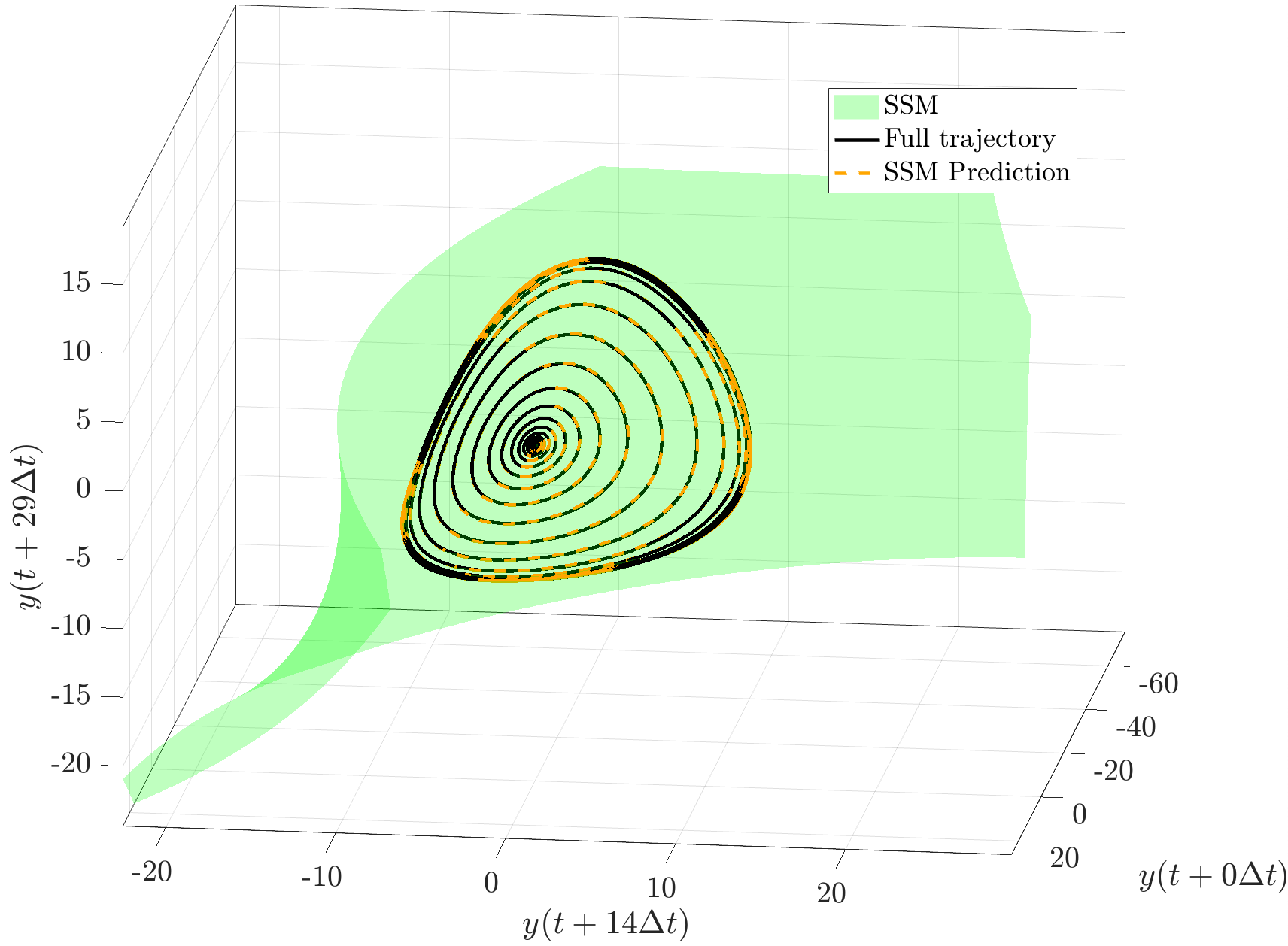}
  \end{minipage}\hfill
  \begin{minipage}[c]{\imgBwidth}
    \centering
    \includegraphics[width=\linewidth]{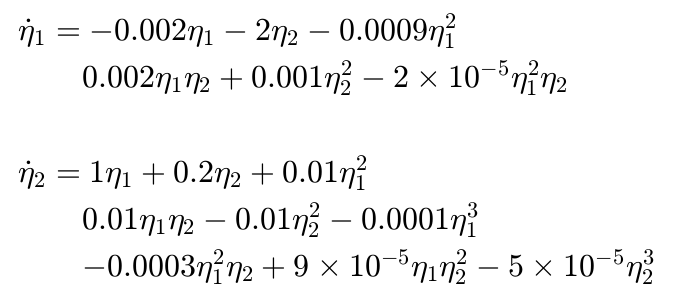}
  \end{minipage}
  \caption{Left: Comparison between a full system \eqref{eq:hutch1} test trajectory (black) together with its prediction (orange) on the data-driven SSM (green). Right: Polynomial of the SSM-reduced dynamics, with monomials whose coefficients are smaller than $10^{-5}$ omitted for plot clarity.}
  \label{fig:reducedEquations_H}
\end{figure}

The estimation of the spectral gap from data serves only to identify the smoothness class of $W$ and its associated reduced dynamics ensured by Theorem~\ref{thm:main1}. This requirement does not restrict the use of higher-order polynomial approximations for both the manifold and the reduced dynamics, since SSMLearn does not reconstruct the Taylor expansions produced by equation-driven methods, instead, it fits multivariate polynomials to data through regression.

\subsection{Chaotic attractor in a single discrete time delay system}
The Mackey--Glass system  \parencite{MackeyGlass1977}, a model for blood cell production, is described by the delay differential equation

\begin{equation}
\dot{x}(t) = \beta \, \frac{x(t-\tau)}{1 + x(t-\tau)^{\alpha}} - \gamma x(t),
\label{eq:mackey-glass}
\end{equation}
where $x(t) \in \mathbb{R}$, $\tau=1$ is a constant discrete delay, and the parameters values are $\beta=4.0$, $\gamma=2.0$, and $\alpha=9.6$ as in \parencite{BredaEtAl2025DDMethods}. The system admits an equilibrium at the origin. Despite being only one-dimensional, the system is known to exhibit chaotic behaviour for suitable parameter choices \parencite{MackeyGlass1977}, as the delay renders the phase space infinite-dimensional. This contrasts with finite-dimensional ODEs, where chaos requires a phase space of dimension at least three.

Following the steps outlined in Section \ref{sec:methodologies}, we first find the correlation dimension of the chaotic attractor to be $m \approx 2.2$ (Figure~\ref{fig:CorrDim_Eigenvalues_MG}).

Assuming that the chaotic attractor is contained in one of the SSMs whose existence in \(X\) is guaranteed by Theorem~\ref{thm:main1}, we seek $d=6$ dimensional SSM, following the procedure outlined in Section~\ref{sec:methodologies}. This choice is motivated by the oscillatory character of the dynamics and by the correlation dimension estimate \(m=2.2\). Indeed, \(d=6\) is the smallest even integer satisfying the sufficient embedding condition \(d>2m=4.4\). This choice is further supported by a preliminary identification of the rightmost eigenvalues in the complex plane, revealing three pairs of weakly damped oscillatory modes.

Accordingly, the minimum delay embedding dimension is $k=13$, which we further increase by $6$ to improve predictive accuracy on test trajectories. Finally, we verify that the estimated correlation dimension of the attractor A is preserved under projection from the delay embedding space to the reduced space (Figure~\ref{fig:CorrDim_Eigenvalues_MG}), confirming that no topological information about the attractor is lost.
\newcommand{\imgWWwidth}{0.49\textwidth} 
\newcommand{\imgWWWwidth}{0.49\textwidth} 
\begin{figure}[H] 
  \centering
  \begin{minipage}[c]{\imgWWwidth}
    \centering
    \includegraphics[width=\linewidth]{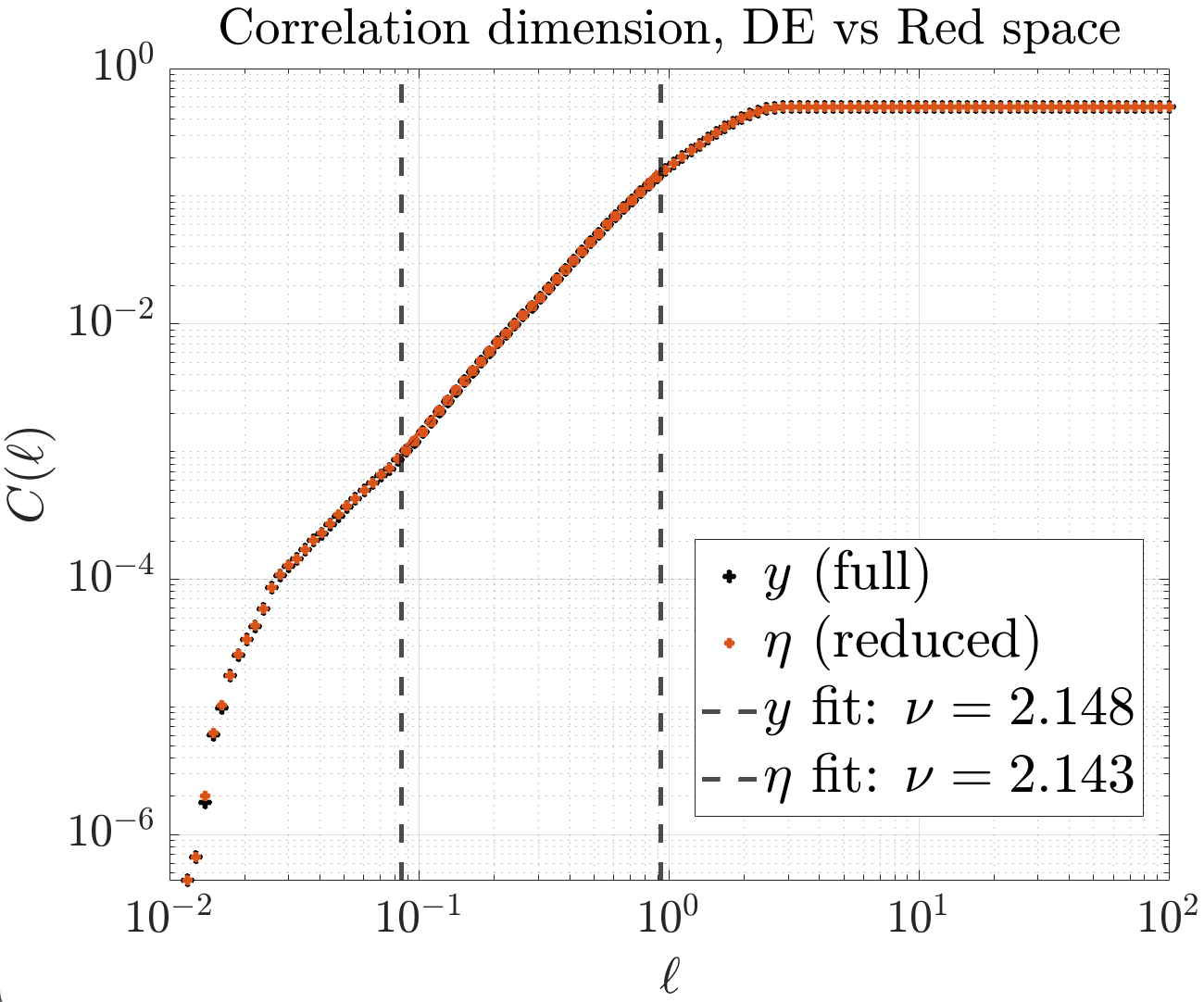}
    \label{fig:mp1a}
  \end{minipage}\hfill
  \begin{minipage}[c]{\imgWWWwidth}
    \centering
    \includegraphics[width=\linewidth]{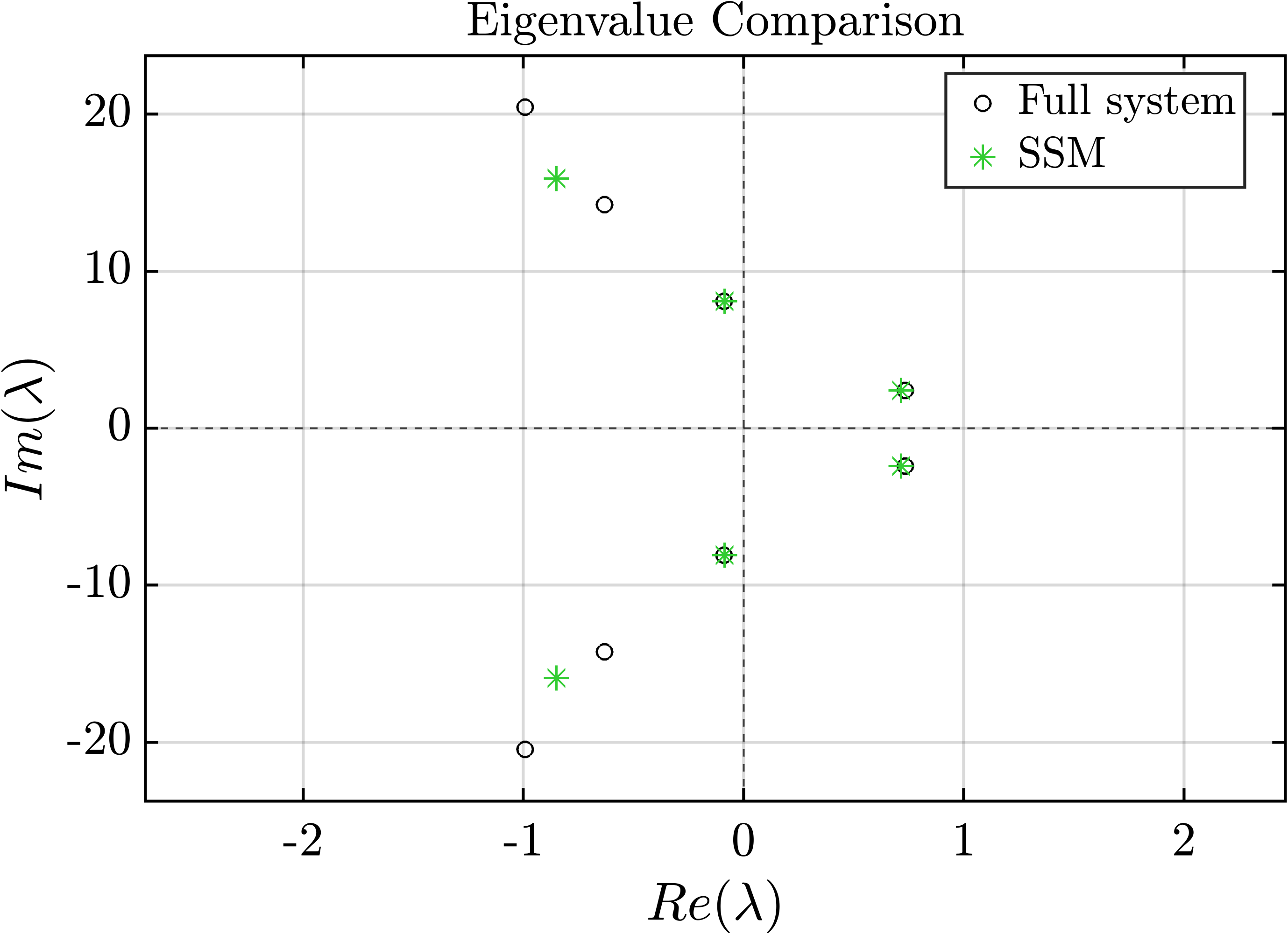}
    \label{fig:mp2a}
  \end{minipage}
  \caption{Correlation dimension estimation and spectrum validation for the Mackey–Glass system \eqref{eq:mackey-glass}. Left: log–log plot of the correlation sum versus radius \parencite{GrassbergerProcaccia1983}, showing a scaling region with slope $m \approx 2.2$. Right: comparison between eigenvalues of the reduced dynamics (green) and those from the full system obtained via root-finding  (black).
}
  \label{fig:CorrDim_Eigenvalues_MG}
\end{figure} 

We identify the reduced SSM dynamics by applying polynomial regression to delay-embedded trajectories converging to the chaotic attractor \(A\), after projecting them onto the tangent space. The graph-style parametrization of the SSM provides a one-to-one lift from the reduced coordinates to the delay-embedding space, thereby avoiding spurious self-intersections. This preserves the uniqueness of the predicted solutions, consistently with the well-posedness of smooth autonomous DDEs \parencite{Diekmann1995}.

The eigenvalues of the reduced dynamics, obtained from data, match those obtained via root-finding applied to the equation (Figure~\ref{fig:CorrDim_Eigenvalues_MG}). Additionally, by estimating eigenvalues from an 8D SSM, one can approximate the dominant spectral gap and conclude that 6D SSMs are of class $C^1$.

A data-driven 6D linear SSM with third-order polynomial reduced dynamics accurately reproduces key features of the attractor, including a physically relevant invariant measure and leading Lyapunov exponent. Although Theorem~\ref{thm:main1} guarantees only $C^1$ reduced dynamics, we employ a third-order polynomial model for two reasons: first, we seek a smooth ODE that best fits the data, without aiming to recover a Taylor expansion; second, linear ODE systems cannot exhibit chaotic dynamics.

Model performance is assessed via short-time trajectory prediction, as long-term accuracy is inherently limited by sensitivity to initial conditions. The positive leading Lyapunov exponent implies that even small projection errors grow exponentially under the flow. Exploiting the topological transitivity of $A$, two sufficiently long trajectories are used for training, while one is reserved for testing. Figure~\ref{fig:TrajTestTrain_MG} shows trajectory divergence over the first $30$ seconds, occurring on a timescale of order $1/\lambda_{\text{Lyap}}$.
A positive Lyapunov exponent can be approximated in the delay embedding space, as detailed in Section \ref{sec:methodologies}.

\begin{figure}[H] 
  \centering
  \begin{minipage}[c]{\imgWWwidth}
    \centering
    \includegraphics[width=\linewidth]{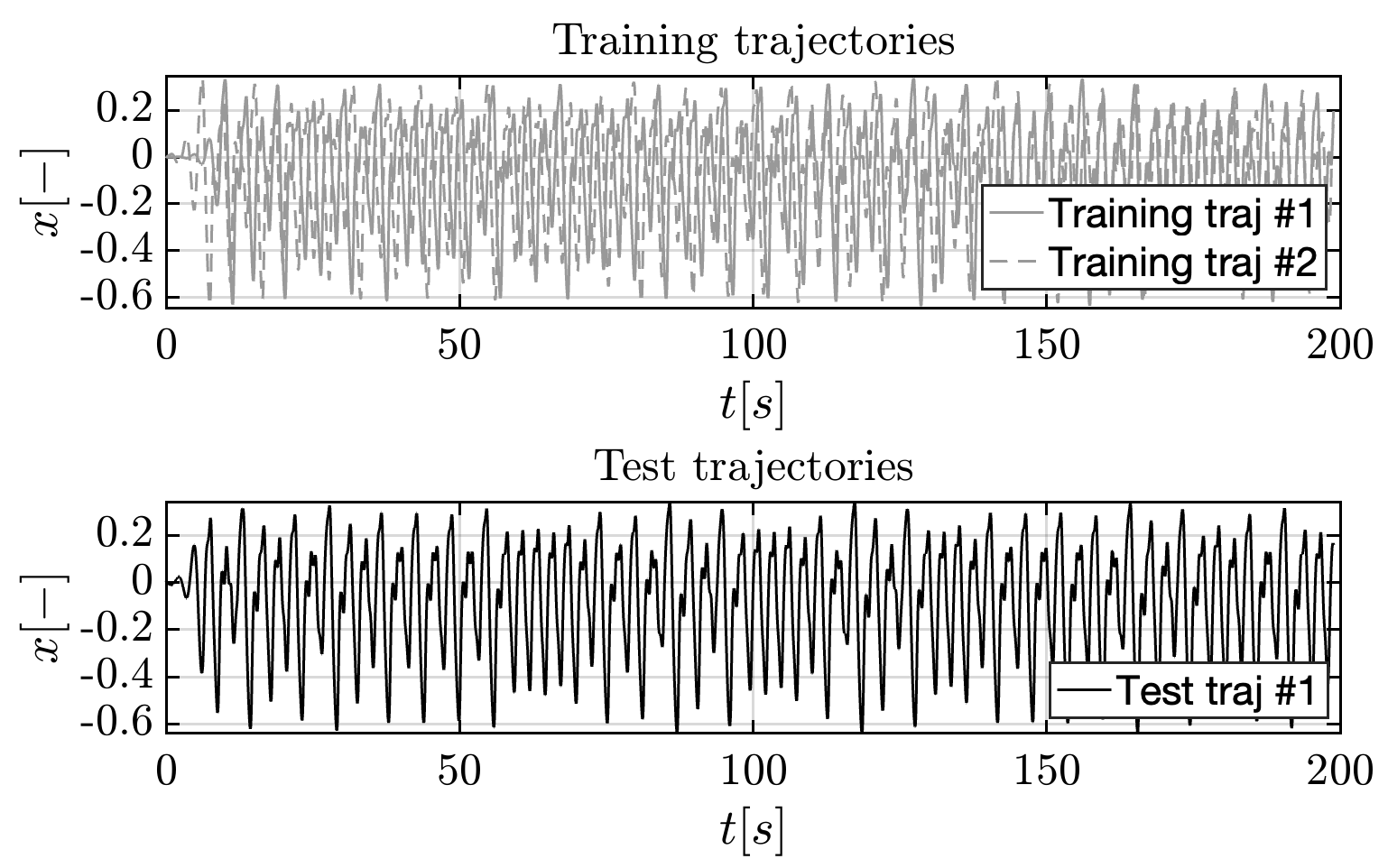}
    \label{fig:mp1b}
  \end{minipage}\hfill
  \begin{minipage}[c]{\imgWWWwidth}
    \centering
    \includegraphics[width=\linewidth]{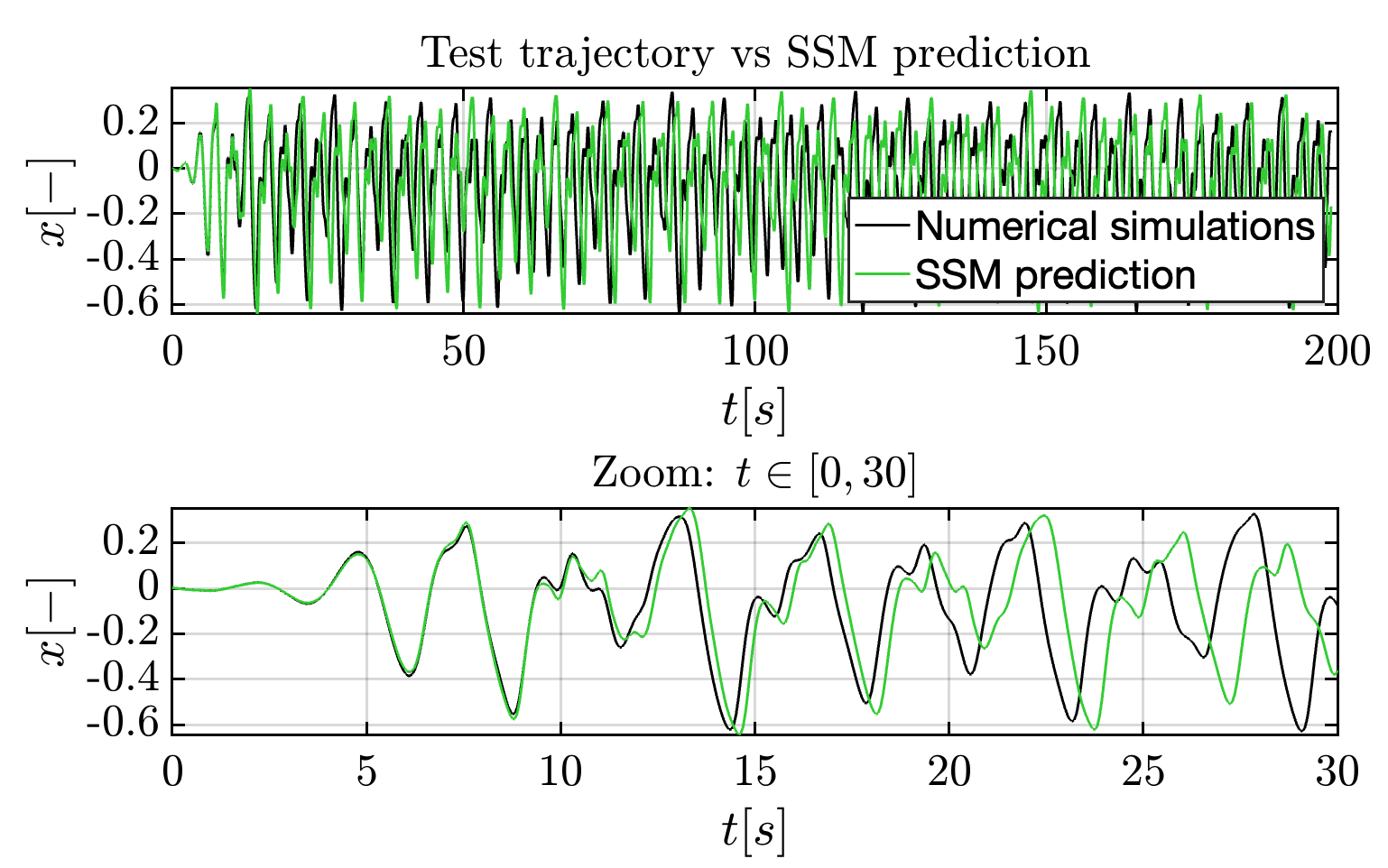}
    \label{fig:mp2b}
  \end{minipage}
  \caption{Short-time prediction performance of the SSM- reduced model of system \eqref{eq:mackey-glass}. Left: training and test trajectories in the observable $x(t)$ used for delay embedding. Right: comparison between full (black) and predicted (green) test trajectories, showing divergence only over a timescale consistent with the inverse leading Lyapunov exponent.
}
  \label{fig:TrajTestTrain_MG}
\end{figure}

A physically relevant invariant measure of the chaotic attractor can be estimated via probability density functions of the reduced coordinates and is accurately reconstructed from training trajectories (Figure~\ref{fig:PDF_LE_MG}). The leading Lyapunov exponent is estimated from data by fitting an exponential growth to the mean deviation of many trajectories, here we used $50$, initialized within an $\epsilon = 10^{-3}$ neighborhood of a point on the attractor. The initial condition for the full model is taken as a constant history function in $X$. This requires access to the full DDE, but only for validation purposes. In contrast, Lyapunov exponents can be estimated directly from the SSM-based reduced dynamics, as the data-driven model allows controlled initialization near the attractor, even when only data are available.

\newcommand{\imgWWWWwidth}{0.49\textwidth} 
\newcommand{\imgWWWWWwidth}{0.49\textwidth} 

\begin{figure}[H] 
  \centering
  \begin{minipage}[c]{\imgWWWWwidth}
    \centering
    \includegraphics[width=\linewidth]{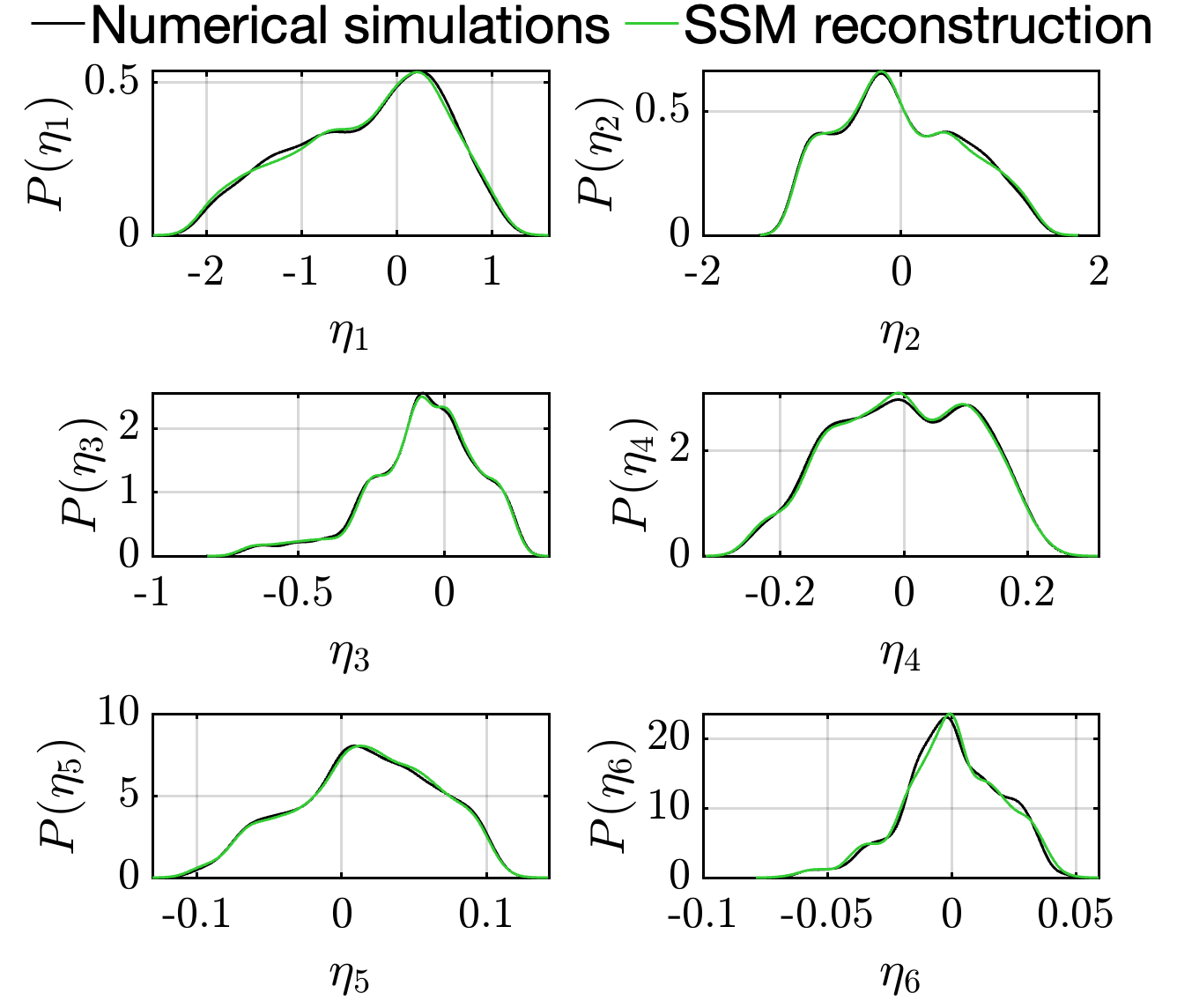}
    \label{fig:mp1c}
  \end{minipage}\hfill
  \begin{minipage}[c]{\imgWWWWWwidth}
    \centering
    \includegraphics[width=\linewidth]{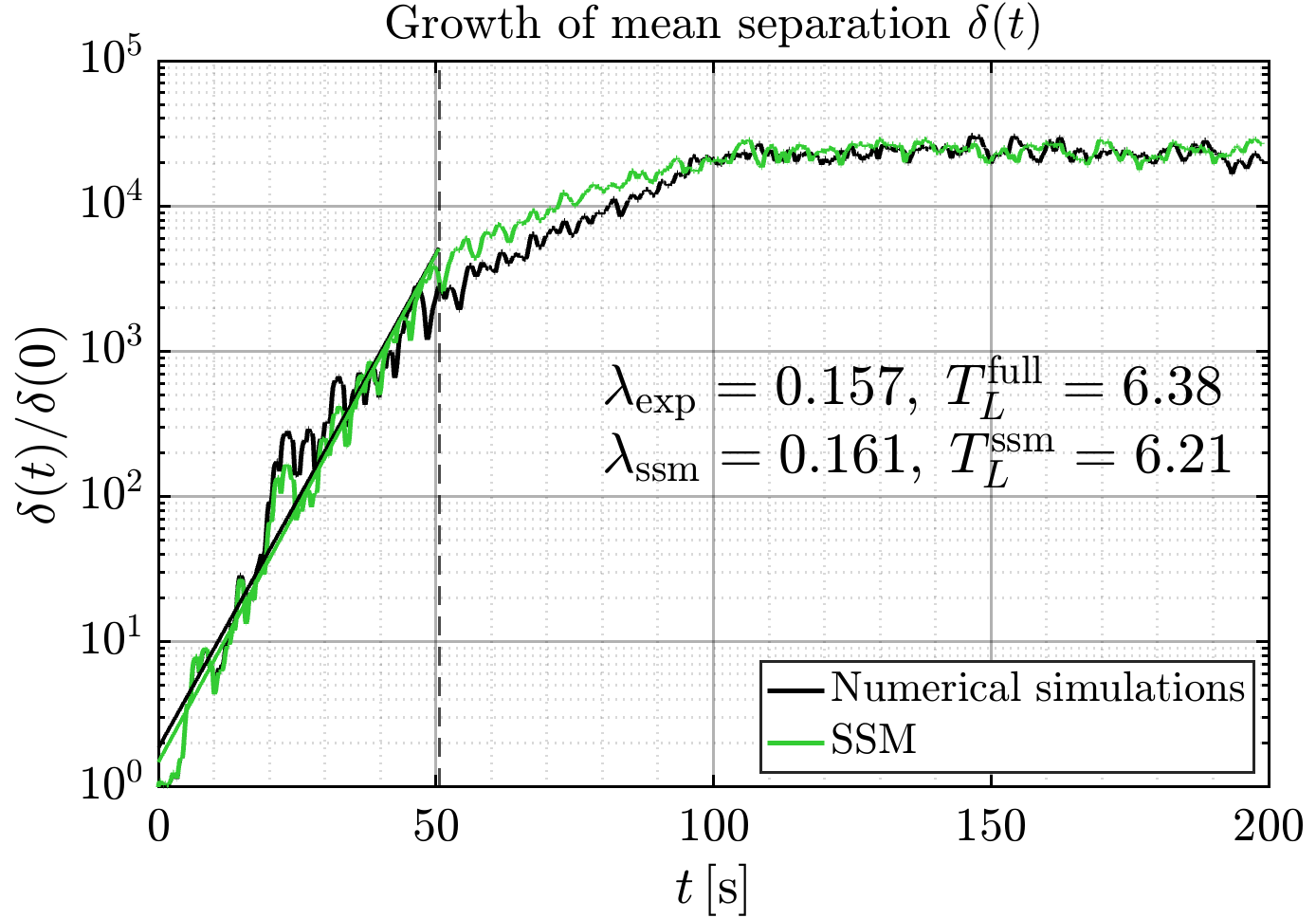}
    \label{fig:mp2c}
  \end{minipage}
  \caption{Analysis of the chaotic attractor of system \eqref{eq:mackey-glass} and its SSM-reduced model. Left: probability density functions of all the reduced coordinates based on training trajectories (black) and their reconstruction (green). 
Right: estimation of the leading Lyapunov exponent from trajectory mean separation over time $\delta(t)$ for the full system (black) and for the SSM-predictions (green).}
  \label{fig:PDF_LE_MG}
\end{figure} 

A 3D projection in the delay embedding space shows the test trajectory together with its prediction converging to the chaotic attractor from the equilibrium (Figure~\ref{fig:DE_MG}). The polynomial reduced dynamics for system~\eqref{eq:mackey-glass} is plotted in Figure~\ref{fig:DE_MG_redDyn} in Appendix~\ref{appendixD}.

\begin{figure}[H]
  \centering
  \includegraphics[width=0.5\textwidth]{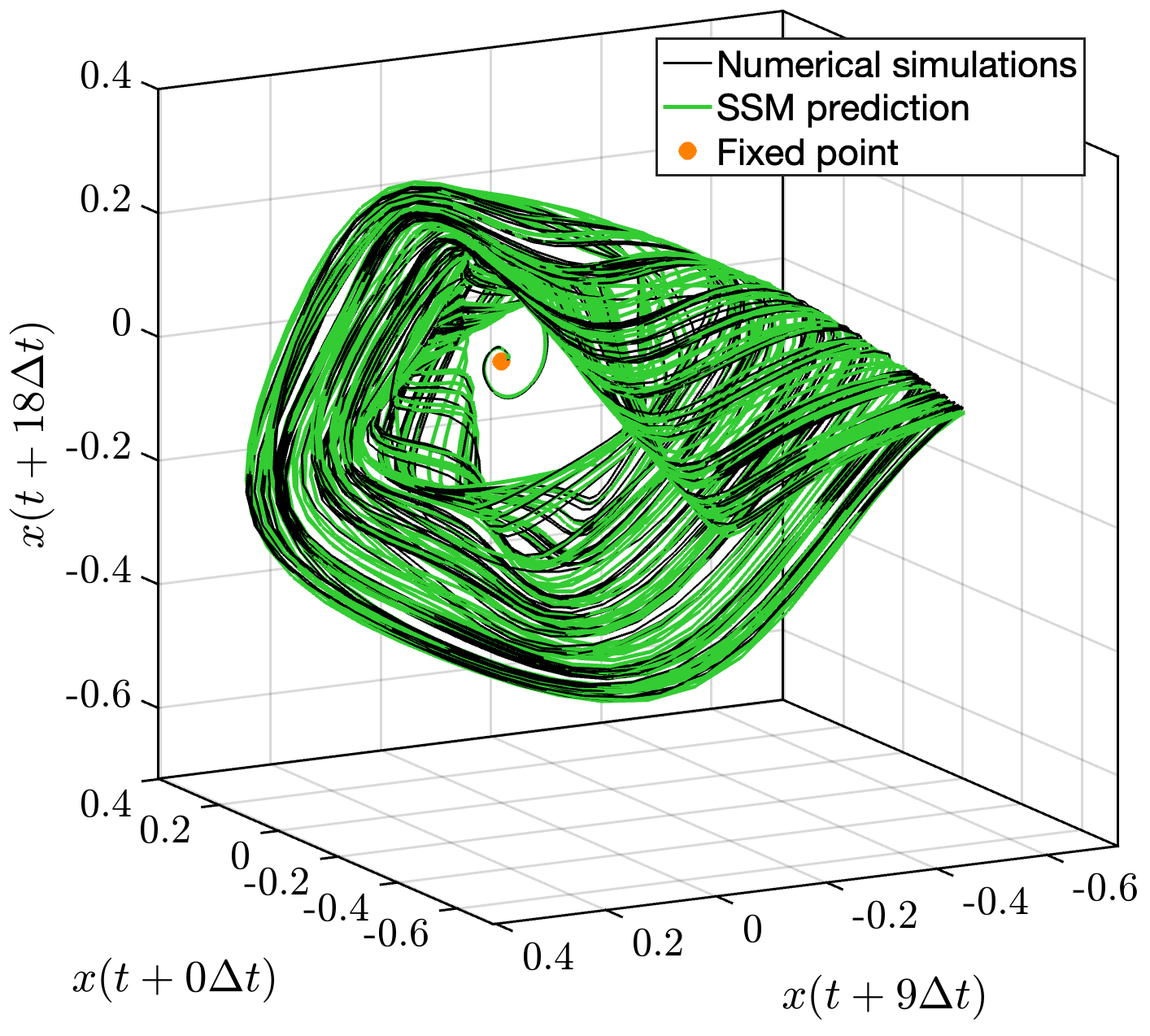}
  \caption{3D projection of the delay embedding space showing convergence of the test trajectory to the chaotic attractor for the full system \eqref{eq:mackey-glass} (black) and the SSM prediction (green).}
  \label{fig:DE_MG}
\end{figure}

We emphasize that, even in an equation-driven setting, Taylor expansions are not guaranteed to give good approximation of global invariant structures such as chaotic attractors, as the attractor can lie outside the domain of convergence of the series. Moreover, we highlight that polynomial parametrizations suffice to obtain predictive reduced models, without the need for rational terms and subsequent knowledge of the underlying equations, as required for the Mackey--Glass equation in \parencite{BredaEtAl2025DDMethods}.

\subsection{Limit cycle in a multiple discrete time delays system}
The two-neuron model with multiple discrete time delays \parencite{BredaEtAl2025DDMethods} is described by
\begin{equation}
\begin{aligned}
\dot{x}_1(t) &= -\kappa x_1(t) + \beta \tanh\big(x_1(t-\tau_s)\big) + a_{12}\tanh\big(x_2(t-\tau_2)\big), \\
\dot{x}_2(t) &= -\kappa x_2(t) + \beta \tanh\big(x_2(t-\tau_s)\big) + a_{21}\tanh\big(x_1(t-\tau_1)\big),
\end{aligned}
\label{eq:two-neuron}
\end{equation}
where $x_1(t), x_2(t) \in \mathbb{R}$, with delays $\tau_s = 1.5$, $\tau_1 = 2.0$, $\tau_2 = 2.0$ and parameters
$
\kappa = 0.5, \
\beta = -1, \
a_{12} = 1, \
a_{21} = 2.
$
The system admits the trivial equilibrium $(x_1^\ast, x_2^\ast) = (0,0)$. From the behavior of the training trajectories, we observe a 2D unstable manifold emanating from the equilibrium containing a stable limit cycle. Accordingly, we seek a 2D SSM associated with the dominant linear modes.

We follow the data-driven SSM-reduction outlined in Section \ref{sec:methodologies}. A 2D $C^\infty$ SSM exists for \eqref{eq:two-neuron} based on Theorem~\ref{thm:main1}.
We learn a cubic 2D manifold with a fifth-order reduced dynamics from the scalar observable $x_1$ evolving on six training trajectories. The dominant eigenvalues are accurately captured by the reduced-order model, as shown in Figure~\ref{fig:Eigenvalues_2N}.

\newcommand{\imgTwoNLeft}{0.34\textwidth} 
\newcommand{\imgTwoNRight}{0.64\textwidth} 

\begin{figure}[H] 
  \centering
  \begin{minipage}[c]{\imgTwoNLeft}
    \centering
    \includegraphics[width=\linewidth]{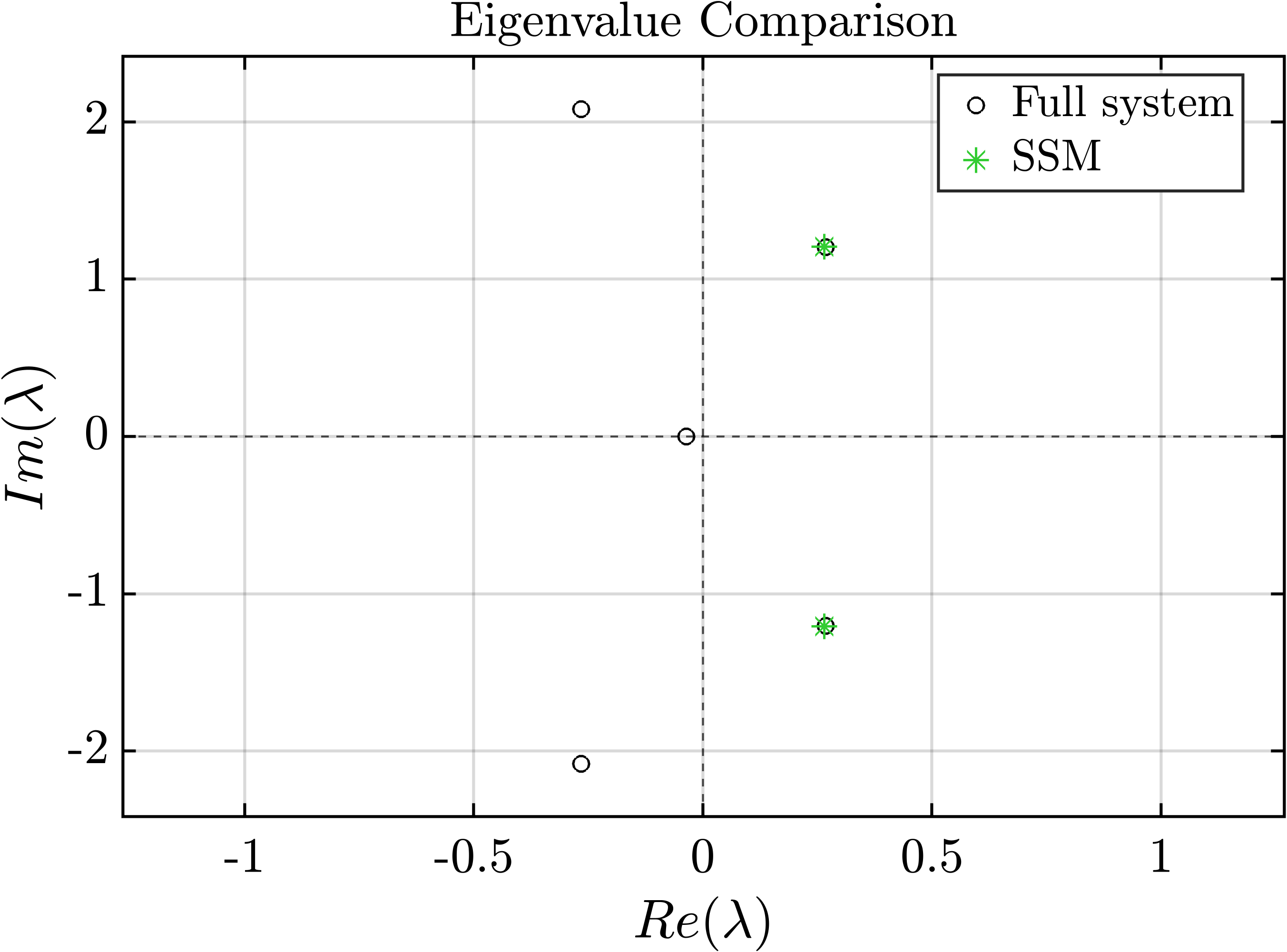}
  \end{minipage}\hfill
  \begin{minipage}[c]{\imgTwoNRight}
    \centering
    \includegraphics[width=\linewidth]{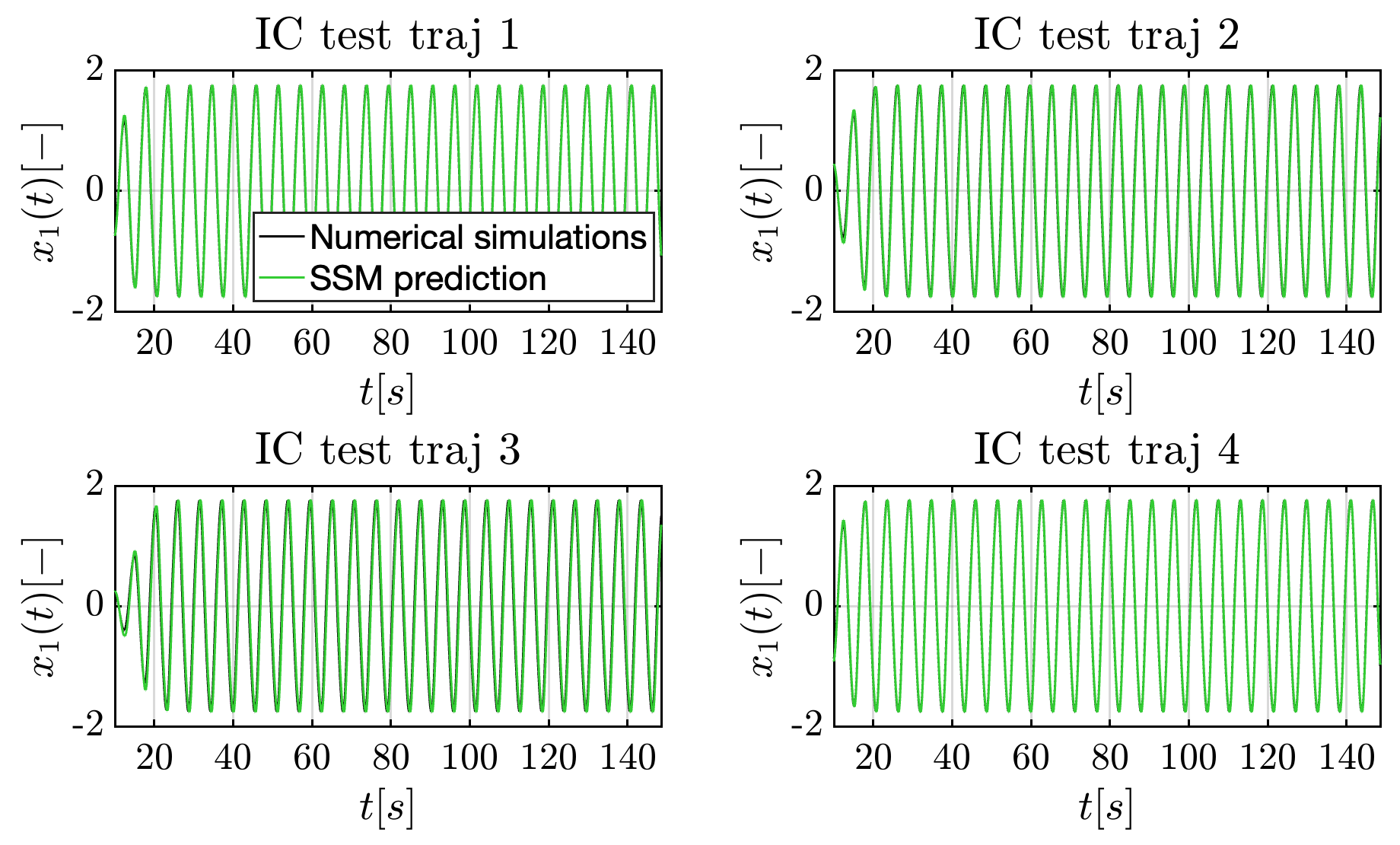}
  \end{minipage}
  \caption{
Left: Comparison between eigenvalues of the data-driven reduced dynamics (green) and those obtained from the full system \eqref{eq:two-neuron} (black) via root-finding.
Right: Four test trajectories (black) and their SSM-based predictions (green).}
  \label{fig:Eigenvalues_2N}
\end{figure}

Four test trajectories of system \eqref{eq:two-neuron}, together with their predictions from the SSM-reduced model, are shown in Figure~\ref{fig:Eigenvalues_2N}, yielding a NMTE of $4.8\%$. A 3D projection of the delay embedding space is also reported in Figure ~\ref{fig:2N_TrajPred}.

\newcommand{\imgTwoNBisLeft}{0.49\textwidth} 
\newcommand{\imgTwoNBisRight}{0.49\textwidth} 

\begin{figure}[H] 
  \centering
  \begin{minipage}[c]{\imgTwoNBisLeft}
    \centering
    \includegraphics[width=1.1\linewidth]{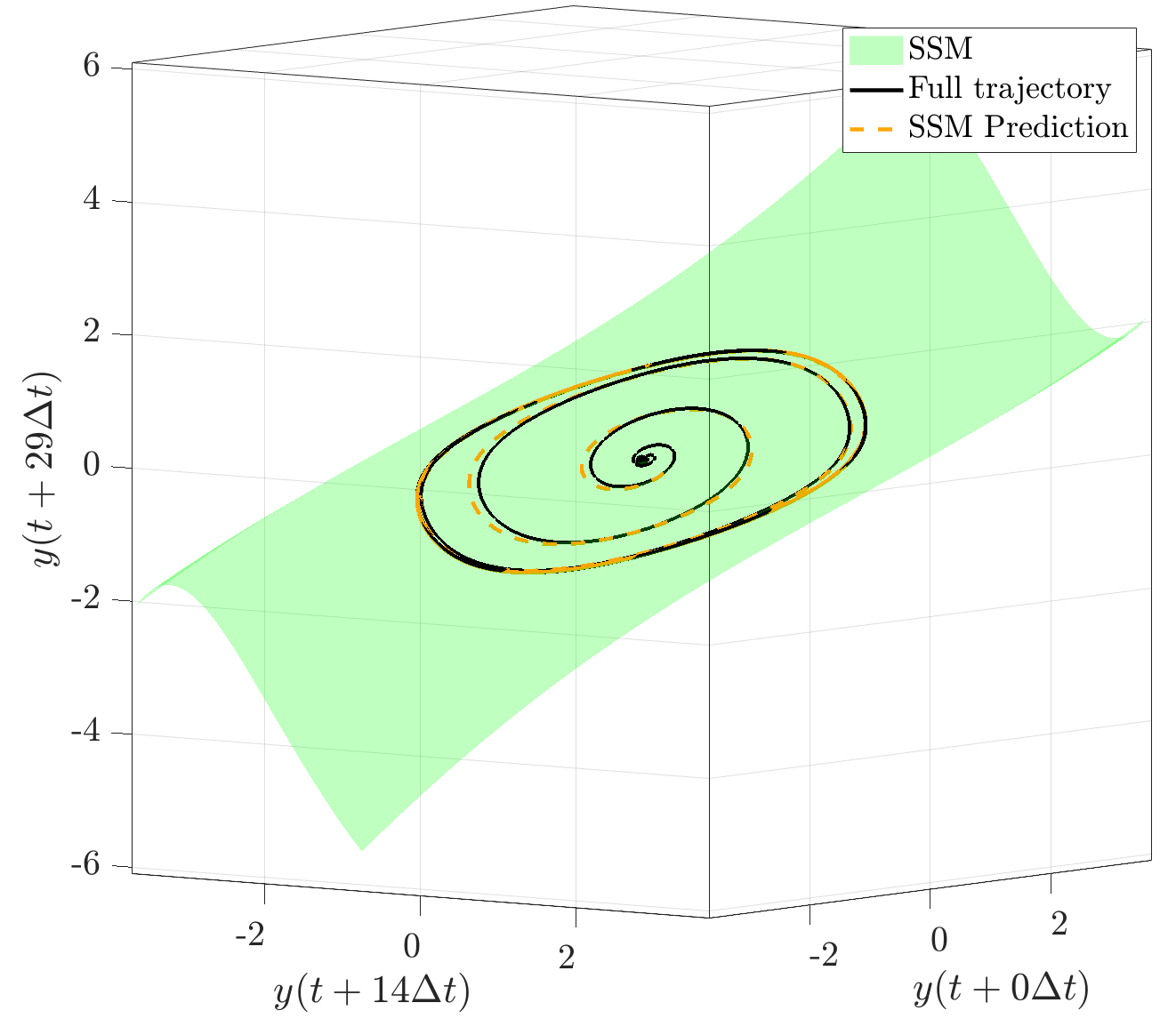}
  \end{minipage}\hfill
  \begin{minipage}[c]{\imgTwoNBisRight}
    \centering
    \includegraphics[width=0.8\linewidth]{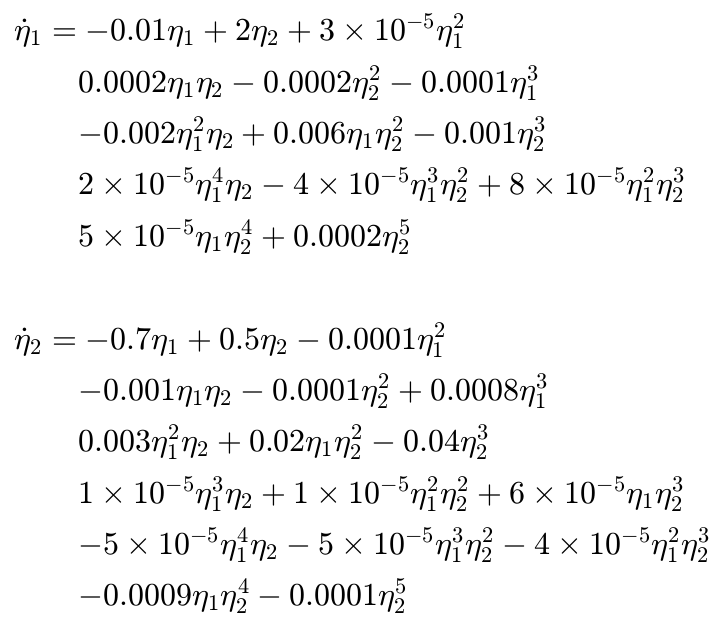}
  \end{minipage}
  \caption{Data-driven SSM reduction for system \eqref{eq:two-neuron}. Left: 3D projection of the delay embedding space showing an SSM (green), with a test trajectory (black) and its prediction (orange).
Right: Learned 5th order polynomial SSM-reduced dynamics. Monomials whose coefficients are smaller than $10^{-5}$ are omitted for plot clarity.}
  \label{fig:2N_TrajPred}
\end{figure}

We observe that data-driven SSM reduction retains predictive power despite the presence of multiple discrete delays, both in number and magnitude, which are assumed unknown throughout the reduction procedure. We next demonstrate that this performance extends to systems exhibiting chaotic attractors.

\subsection{Chaotic attractor in a multiple discrete delays system}

We now consider the system \parencite{BredaEtAl2025DDMethods} 
\begin{equation}
\begin{aligned}
\dot{x}_1(t) &= -x_2(t)-x_3(t)+\alpha_1 x_1(t-\tau_1)+\alpha_2 x_1(t-\tau_2), \\
\dot{x}_2(t) &= x_1(t)+\beta_1 x_2(t), \\
\dot{x}_3(t) &= \beta_2 + x_3(t)x_1(t)-\gamma x_3(t),
\end{aligned}
\label{eq:rossler-double-delay}
\end{equation}
where \(x_1(t), x_2(t), x_3(t) \in \mathbb{R}\). The delays,
\(\tau_1 = 1.0\) and \(\tau_2 = 2.0\), and the parameter values
$
\alpha_1=0.2,\
\alpha_2=1.0,\
\beta_1=0.2,\
\beta_2=0.2,\
\gamma=1.2.$ 
are taken from \parencite{BredaEtAl2025DDMethods}.
The system admits a nontrivial equilibrium, which we shift to the origin. This system exhibits a chaotic attractor resembling that of the R\"ossler system. As in the Mackey--Glass example, we estimate the correlation dimension of the attractor as $m \approx 2.17$ (Figure~\ref{fig:CorrDim_Eigenvalues_RO}). 

Assuming again that the chaotic attractor is contained in one of the SSMs that exist in $X$ by Theorem \ref{thm:main1}, and given the oscillatory nature of the dynamics, we seek a $d=6$ dimensional SSM following Section \ref{sec:methodologies}. Since $d > 2m$, this choice guarantees that the attractor can be embedded in the reduced space within the delay-coordinate space \parencite{SauerYorkeCasdagli1991}. We construct a delay embedding space of dimension $k=23$ using the delay-coordinate map, exceeding the minimum requirement $k=13$ to improve model accuracy.

The dominant SSM is approximated using first-order polynomials for the manifold, while the reduced dynamics are learned via radial basis functions (RBFs), as polynomial approximations are insufficiently accurate, as already noted in \parencite{KaszasHaller2025, AbbascianoEndreszStepanHaller2026JSV, Xu2024}. For the purpose of estimating the Lyapunov exponent from data, we regard the time series predicted by the RBF-based discrete map as a trajectory of an underlying continuous-time dynamical system, sampled at time instants separated by the discrete-map time step.

\newcommand{\imgROLeft}{0.49\textwidth} 
\newcommand{\imgRORight}{0.49\textwidth} 

\begin{figure}[H] 
  \centering
  \begin{minipage}[c]{\imgROLeft}
    \centering
    \includegraphics[width=\linewidth]{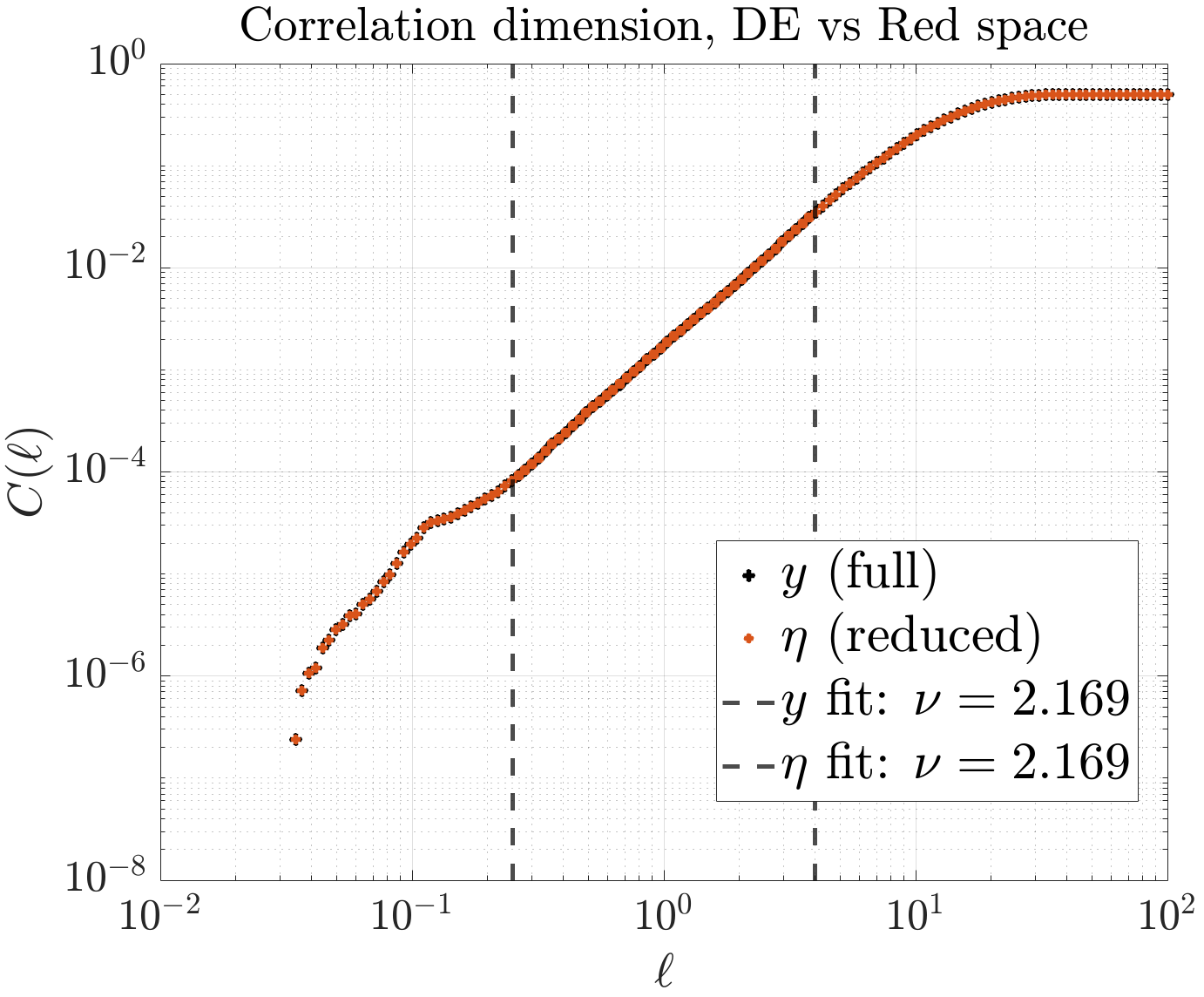}
  \end{minipage}\hfill
  \begin{minipage}[c]{\imgRORight}
    \centering
    \includegraphics[width=\linewidth]{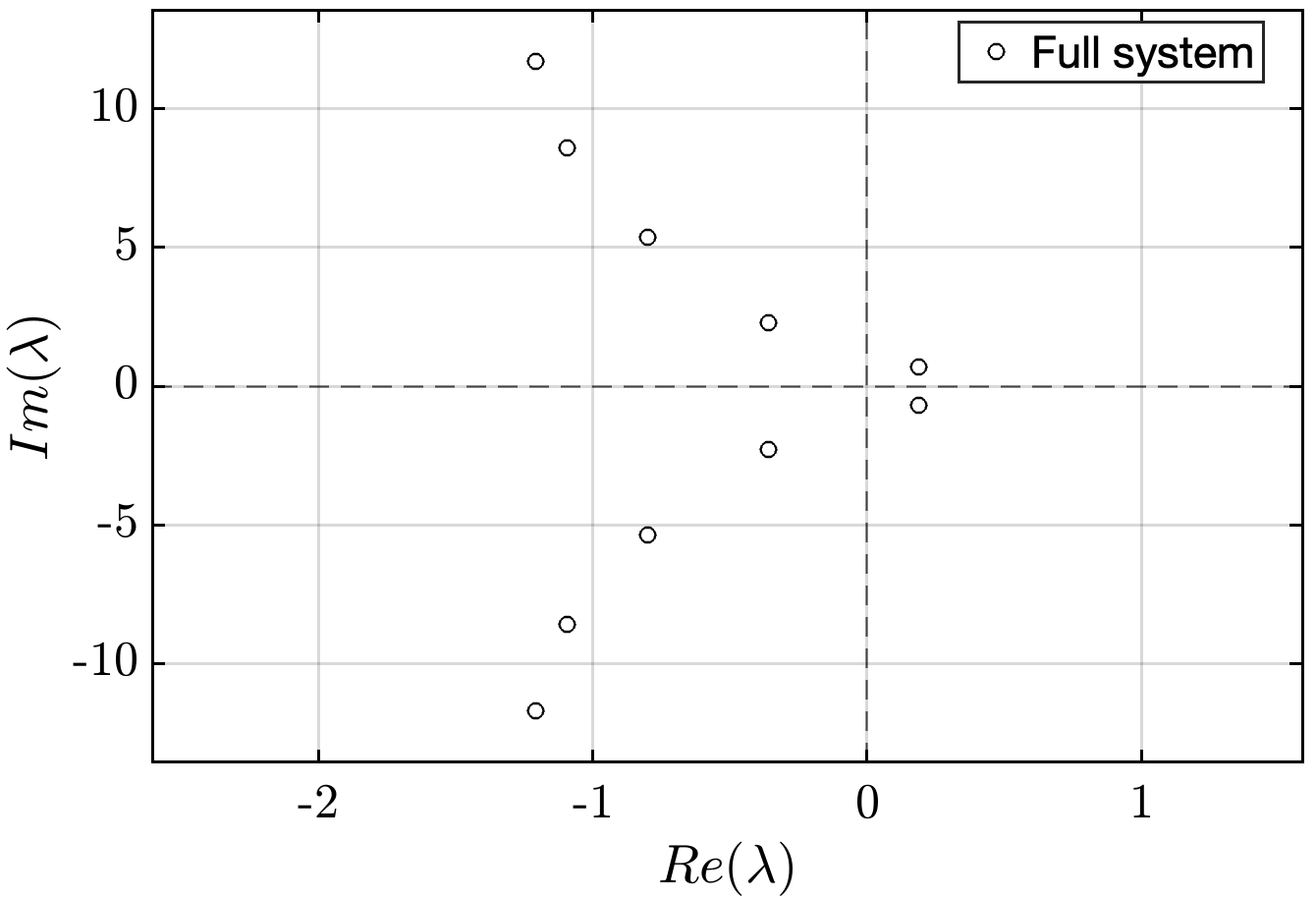}
  \end{minipage}
  \caption{
Left: Correlation dimension estimation for the attractor of system \eqref{eq:rossler-double-delay} from data in the delay embedding space (black) and in the reduced space (orange), showing agreement in correlation dimension estimation around $m \approx 2.17$. 
Right: Eigenvalues of the full system \eqref{eq:rossler-double-delay} obtained via root-finding \parencite{AppeltansSilmMichiels2022, AppeltansMichiels2023}.}
  \label{fig:CorrDim_Eigenvalues_RO}
\end{figure}

Two of the four training trajectories, together with predictions for four test trajectories, are shown in Figure~\ref{fig:RO_Traj}. As expected for chaotic dynamics, short-term predictions are accurate, while long-term prediction is limited by sensitivity to initial conditions.

\newcommand{\imgROTrajLeft}{0.49\textwidth} 
\newcommand{\imgROTrajRight}{0.49\textwidth} 

\begin{figure}[H] 
  \centering
  \begin{minipage}[c]{\imgROTrajLeft}
    \centering
    \includegraphics[width=\linewidth]{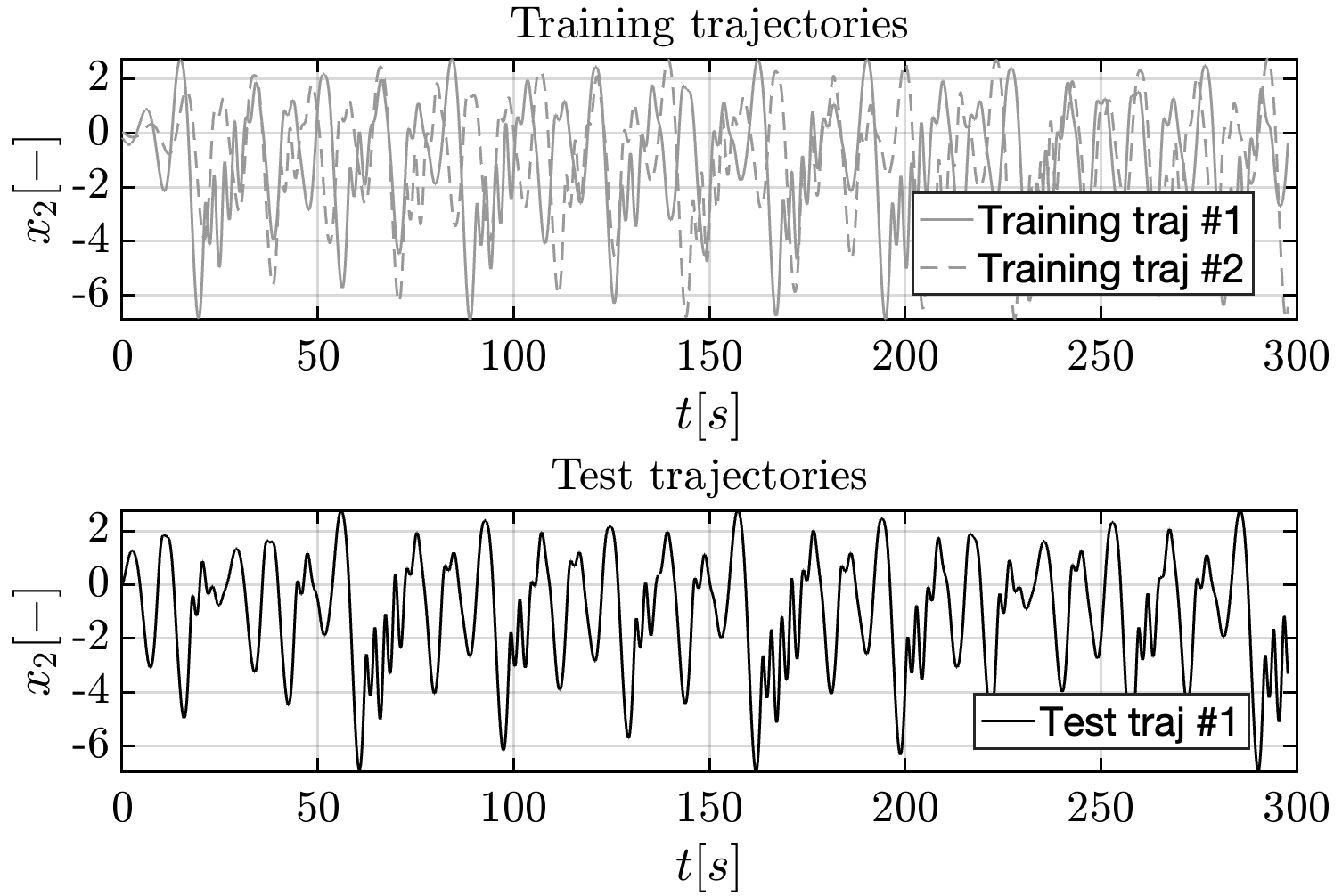}
  \end{minipage}\hfill
  \begin{minipage}[c]{\imgROTrajRight}
    \centering
    \includegraphics[width=\linewidth]{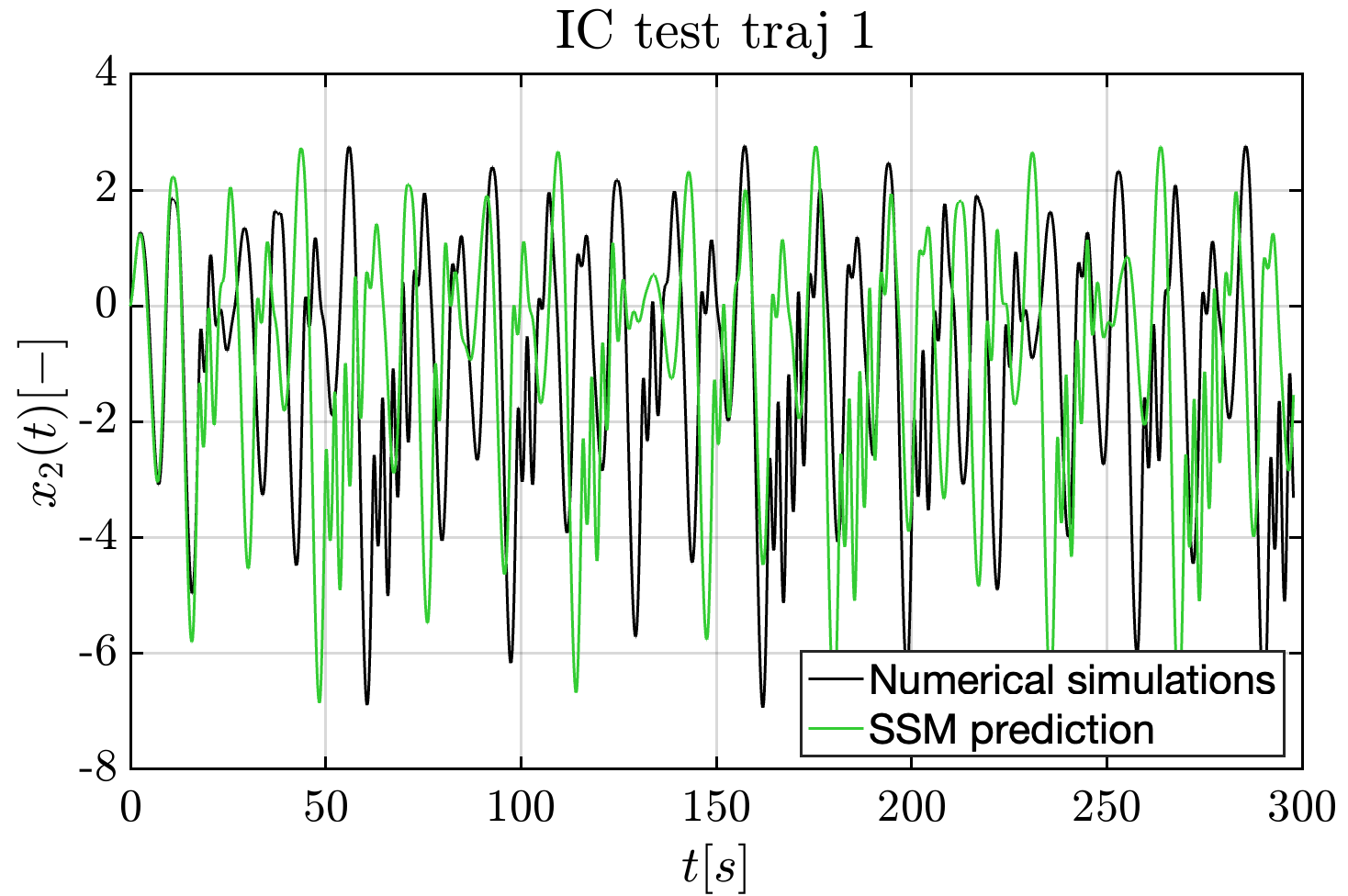}
  \end{minipage}
  \caption{
Left: Training and test trajectories for the SSM-reduction of system \eqref{eq:rossler-double-delay} in the observable $x_2$. 
Right: Predictions (green) of a full system's test trajectory (black), with divergence at longer times due to sensitivity to initial conditions.}
  \label{fig:RO_Traj}
\end{figure}

A physically relevant invariant measure and the leading Lyapunov exponent are estimated from data and reported in Figure~\ref{fig:PDF_LE_RO}.

\newcommand{\imgROPDFLeft}{0.49\textwidth} 
\newcommand{\imgROPDFRight}{0.49\textwidth} 

\begin{figure}[H] 
  \centering
  \begin{minipage}[c]{\imgROPDFLeft}
    \centering
    \includegraphics[width=\linewidth]{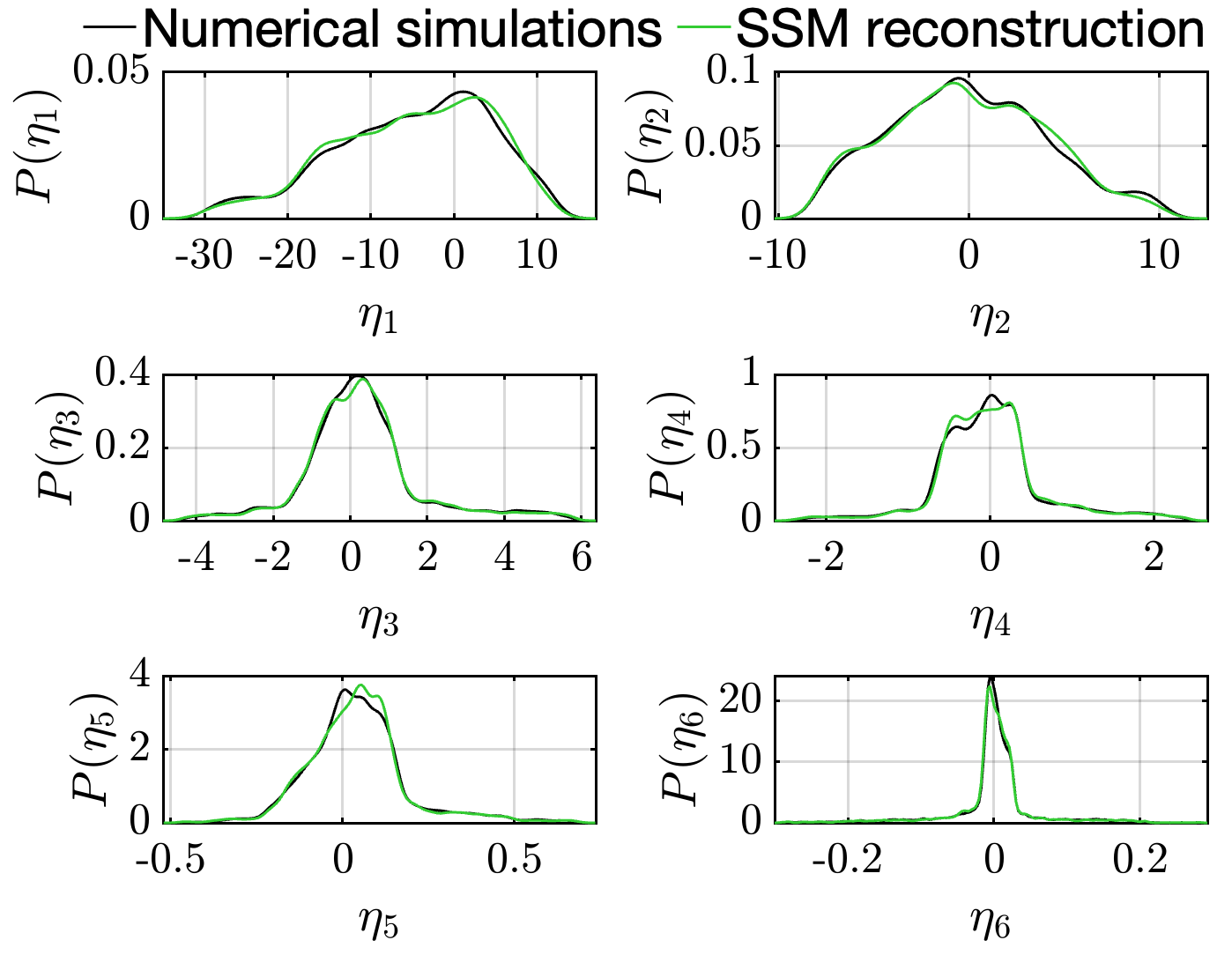}
  \end{minipage}\hfill
  \begin{minipage}[c]{\imgROPDFRight}
    \centering
    \includegraphics[width=\linewidth]{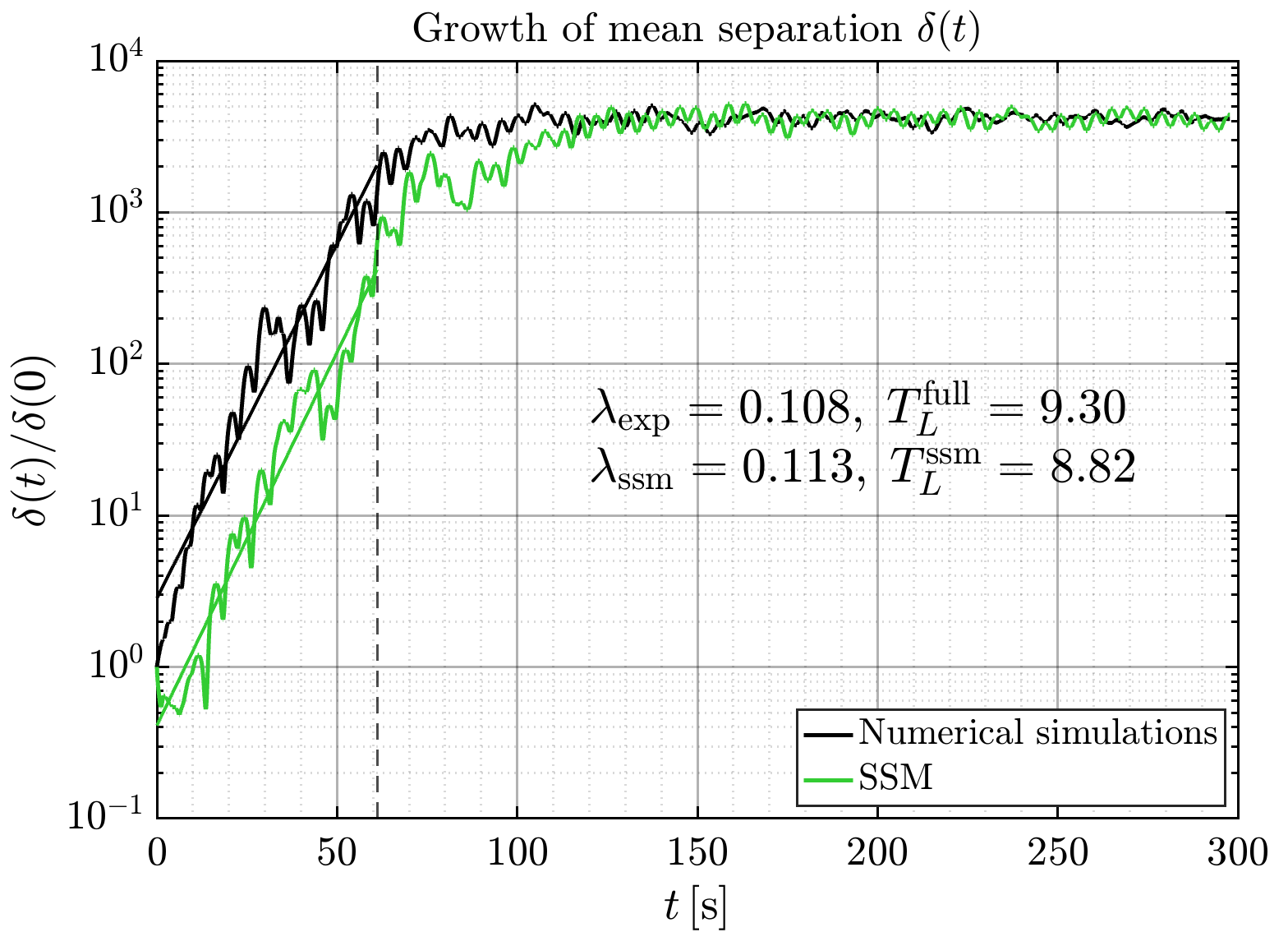}
  \end{minipage}
  \caption{Analysis of the attractor of system \eqref{eq:rossler-double-delay}. Left: Probability density functions of the reduced coordinates, showing agreement between full system's training trajectories (black) and their SSM-based reconstruction (green).
Right: Estimation of the leading Lyapunov exponent based on trajectory mean separation over time $\delta(t)$, comparing the full system (black) and the SSM-based prediction (green).}
  \label{fig:PDF_LE_RO}
\end{figure}

A 3D projection of the delay embedding space (Figure~\ref{fig:RO_3D}) illustrates the attractor and the convergence from the unstable equilibrium of a test trajectory and its prediction.

\newcommand{\imgRO}{0.6\textwidth} 

\begin{figure}[H]
  \centering
  \includegraphics[width=\imgRO]{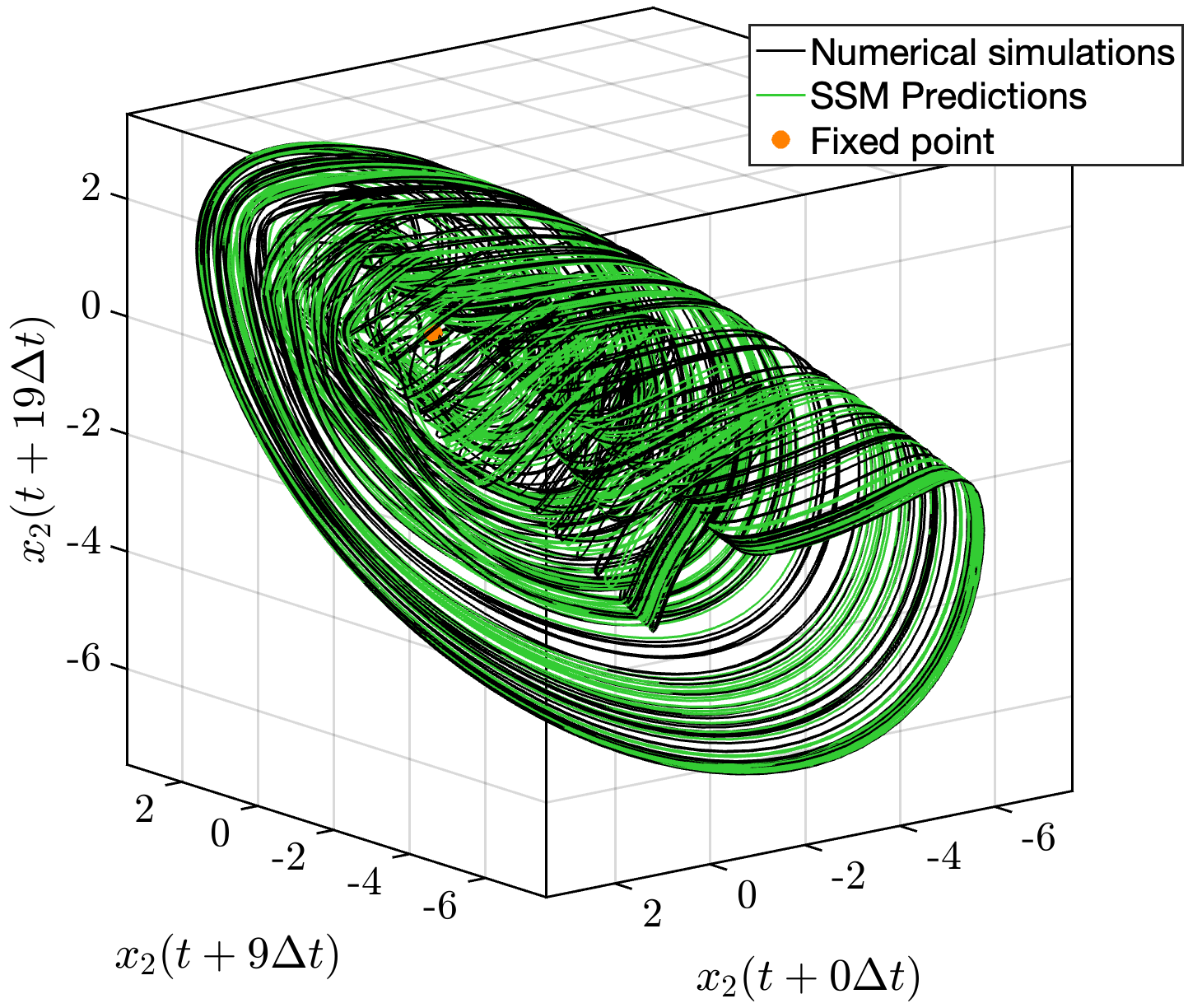}
  \caption{3D projection of the delay embedding space showing the chaotic attractor of system \eqref{eq:rossler-double-delay} and convergence of four test trajectories toward it, for both the full system (black) and the SSM-reduced model (green).}
  \label{fig:RO_3D}
\end{figure}

\subsection{Parametric reduced model for delayed car-following system}
We consider a time-delay traffic model describing the interaction between a human-driven vehicle (HV) and an autonomous vehicle (AV) \parencite{SzakszOroszStepan2024Traffic}, whose experimental relevance for autonomous driving control has been demonstrated in \parencite{OroszMolnar2025ConnectedVehicles}. The system is governed by
\begin{equation}
\begin{aligned}
\dot{\tilde h}(t) &= \tilde v(t)-\tilde v_{-1}(t),\\
\dot{\tilde v}_{-1}(t) &= \alpha\Big(V\big(\tilde h(t-\tau)+h^\ast\big)-\big(\tilde v_{-1}(t-\tau)+v_{\mathrm{ref}}\big)\Big)
+\beta\Big(\tilde v(t-\tau)-\tilde v_{-1}(t-\tau)\Big),\\
\dot{\tilde v}(t) &= \hat{\beta}\Big(v_{\mathrm{ref}}-\big(\tilde v(t-\tau)+v_{\mathrm{ref}}\big)\Big)
+\beta_{-1}\Big(\tilde v_{-1}(t-\tau)-\tilde v(t-\tau)\Big),
\end{aligned}
\label{eq:traffic-dde}
\end{equation}
where $\tilde h$ is the shifted headway between the two vehicles, and $\tilde v_{-1}$ and $\tilde v$ are the shifted velocities of the HV and AV relative to the reference speed $v_{\mathrm{ref}}$. The delay $\tau$ represents the reaction time delay in the control loop of both the HV and the AV. The gains $\alpha$ and $\beta$ describe the HV response, while $\hat{\beta}$ and $\beta_{-1}$ play the analogous role in the AV controller, accounting for speed tracking and velocity-difference feedback. We use the same parameters as in \parencite{SzakszOroszStepan2024Traffic},
\[
\alpha=0.3,\quad
\beta=0.4,\quad
\hat{\beta}=0.6,\quad
\beta_{-1}=-0.4,\quad
v_{\mathrm{ref}}=26.55,
\]
with
\[
h_{\mathrm{stop}}=5,\quad
h_{\mathrm{go}}=55,\quad
v_{\max}=30,
\]
and $h^\ast$ defined implicitly by $V(h^\ast)=v_{\mathrm{ref}}$.

We perform parameter-dependent SSM reduction for the traffic delay system with the equilibrium undergoing a Hopf bifurcation. Such reductions have been shown to accurately capture even global phenomena, including heteroclinic bifurcations of limit cycles \parencite{AbbascianoEndreszStepanHaller2026JSV}, as well as in fluid systems undergoing Hopf bifurcation \parencite{KingEtAl2026ParametricSSM}. We remark that in the DDE setting, we do not focus on uniqueness, since a smoothest nonlinear continuation of the SSM, called the primary SSM in \parencite{Haller2016}, is generally not guaranteed to exist.

Before and after the supercritical Hopf bifurcation, we identify from data, at each fixed value of the delay parameter, one SSM among the infinitely many SSMs. We then interpolate the tangent space, the polynomial parametrization of the manifold, and the polynomial parametrization of the reduced dynamics across the sampled parameter values. This yields a parametric SSM model in the delay-embedding space augmented with the bifurcation parameter, following \parencite{AbbascianoEndreszStepanHaller2026JSV}. Each autonomous SSM reduction at a fixed value of the delay parameter is carried out according to the procedure detailed in Section~\ref{sec:methodologies}. The manifold and the dynamics are not necessarily smooth with respect to the delay parameter; nevertheless, their dependence on the delay can be approximated via smooth spline interpolation, rather than simple linear interpolation, when it provides better accuracy.

Resonances are less disruptive in the data-driven setting, as we do not compute Taylor expansions of the solutions to the invariance PDE, but approximate them directly from data. Nevertheless, they still provide useful guidance: if a good approximation of the manifold cannot be learned, we will restrict the parameter range or reduce the polynomial order, as lower-order approximations cut out resonance-induced differences, making different SSMs less distinguishable, and being that the resonances at lower order occur further away from the critical value of the parameter, as noted in \parencite{KingEtAl2026ParametricSSM}. 

The parametric SSM model is trained at four parameter values, $\tau = \{1.030\,\text{s},\; 1.050\,\text{s},\; 1.070\,\text{s},\; 1.090\,\text{s}\}$, by constructing individual autonomous SSM-reduced models. Each model is trained on $8$ trajectories using only the segments lying approximately on the manifold and evaluated on $2$ unseen trajectories. The results (see Fig.~\ref{fig:Traffic_Parametric}) demonstrate accurate reconstruction across unseen parameter values, with an average NMTE below $1\%$, even as a supercritical Hopf bifurcation occurs at the origin.

We employ a 2D SSM with a third-order polynomial parametrization of the manifold and fifth-order polynomial reduced dynamics. The delay-embedding space has dimension $7$, using all three state variables as observables. We remark that, because interpolation is used to construct the parametric SSM model, all individual SSM reductions must share the same SSM dimension, delay-embedding dimension, SSM approximation order, and reduced-dynamics approximation order.

\newcommand{\imgXYwidth}{0.48\textwidth}
\begin{figure}[H]
\centering

\begin{minipage}{\imgXYwidth}
    \centering
    \includegraphics[width=\linewidth]{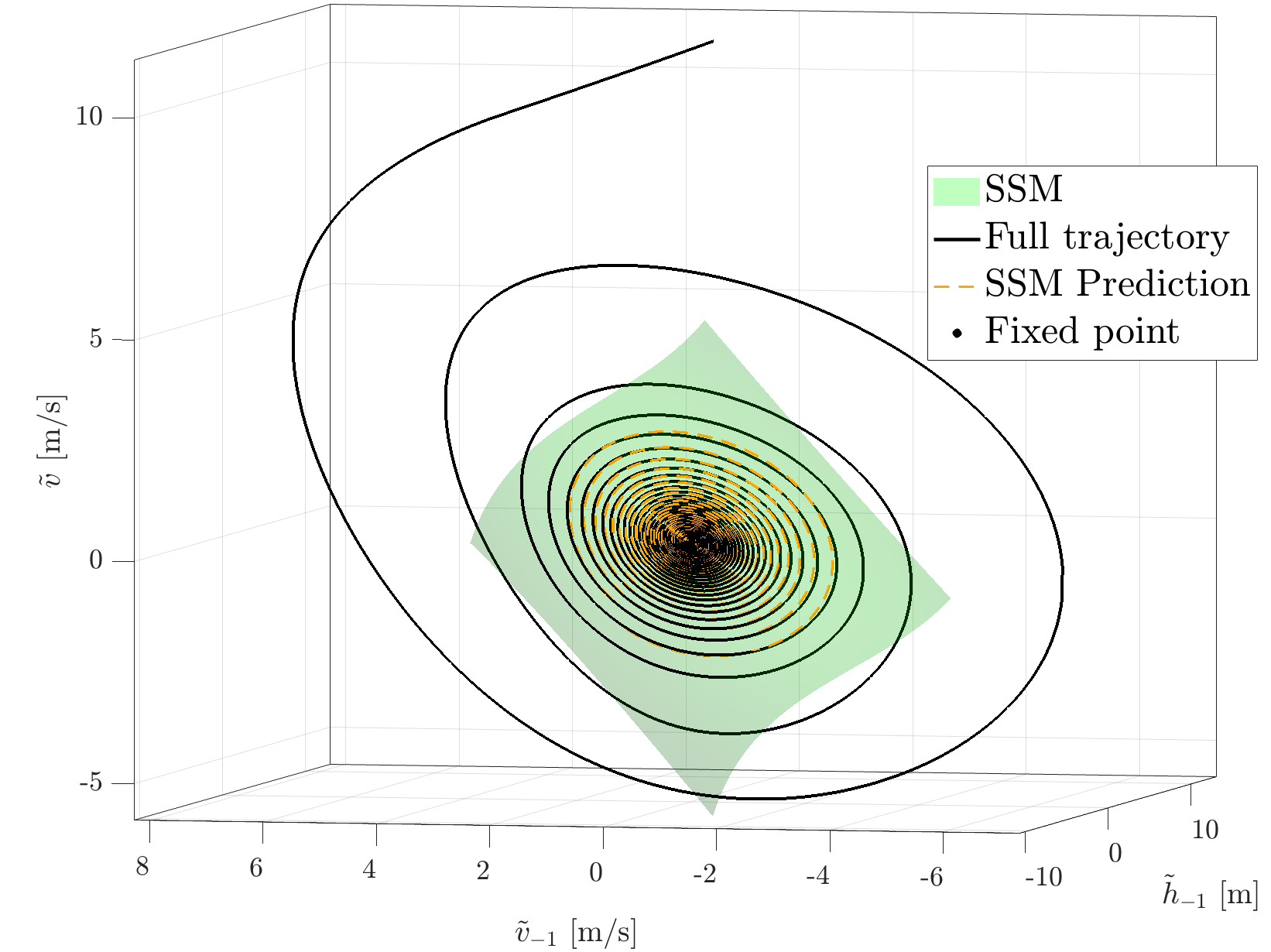}
    \caption*{(a) $\tau = 1.040\,\mathrm{s}$}
\end{minipage}\hfill
\begin{minipage}{\imgXYwidth}
    \centering
    \includegraphics[width=\linewidth]{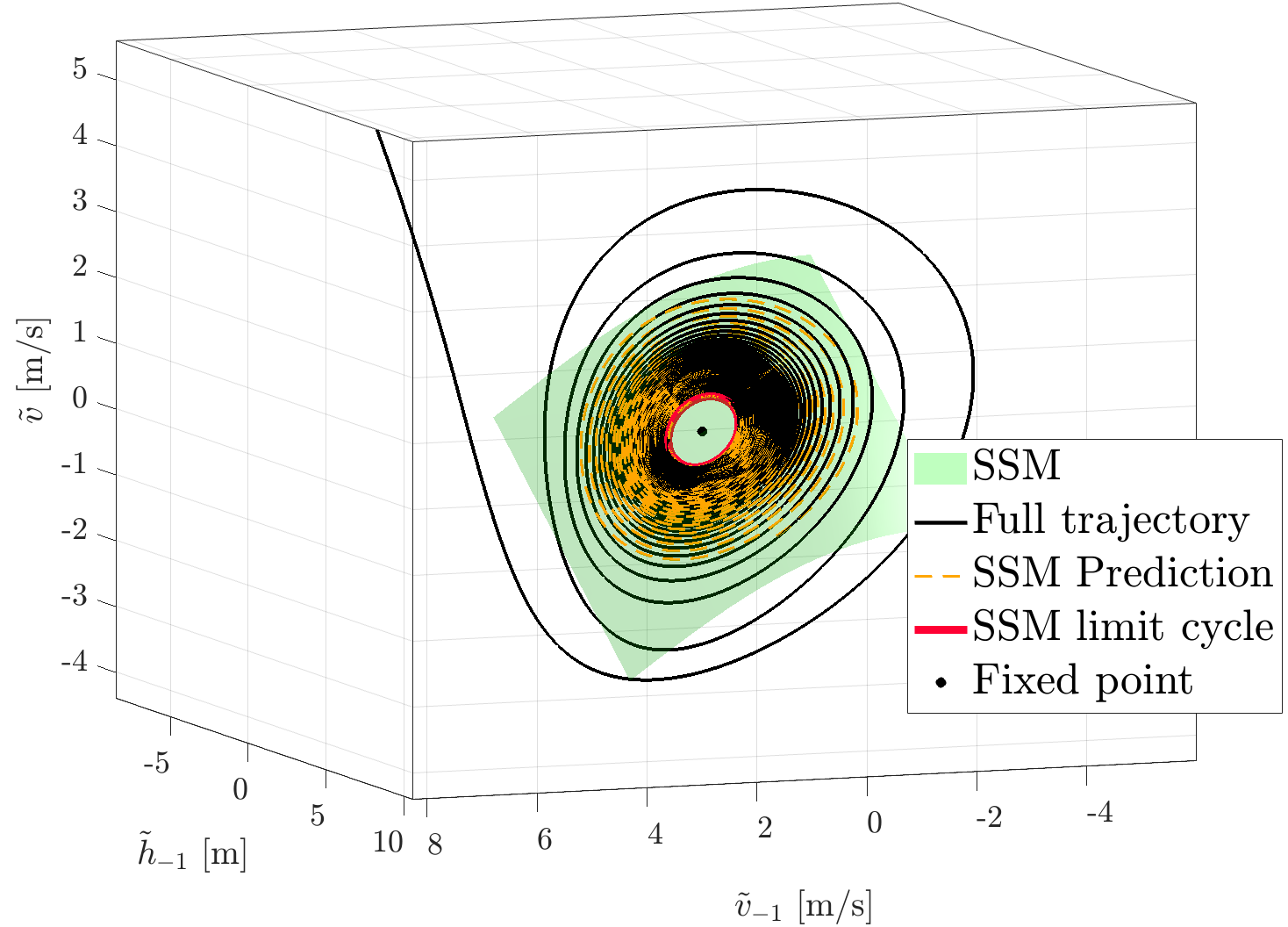}
    \caption*{(b) $\tau = 1.065\,\mathrm{s}$}
\end{minipage}
\vspace{0.5cm}

\begin{minipage}{\imgXYwidth}
    \centering
    \includegraphics[width=\linewidth]{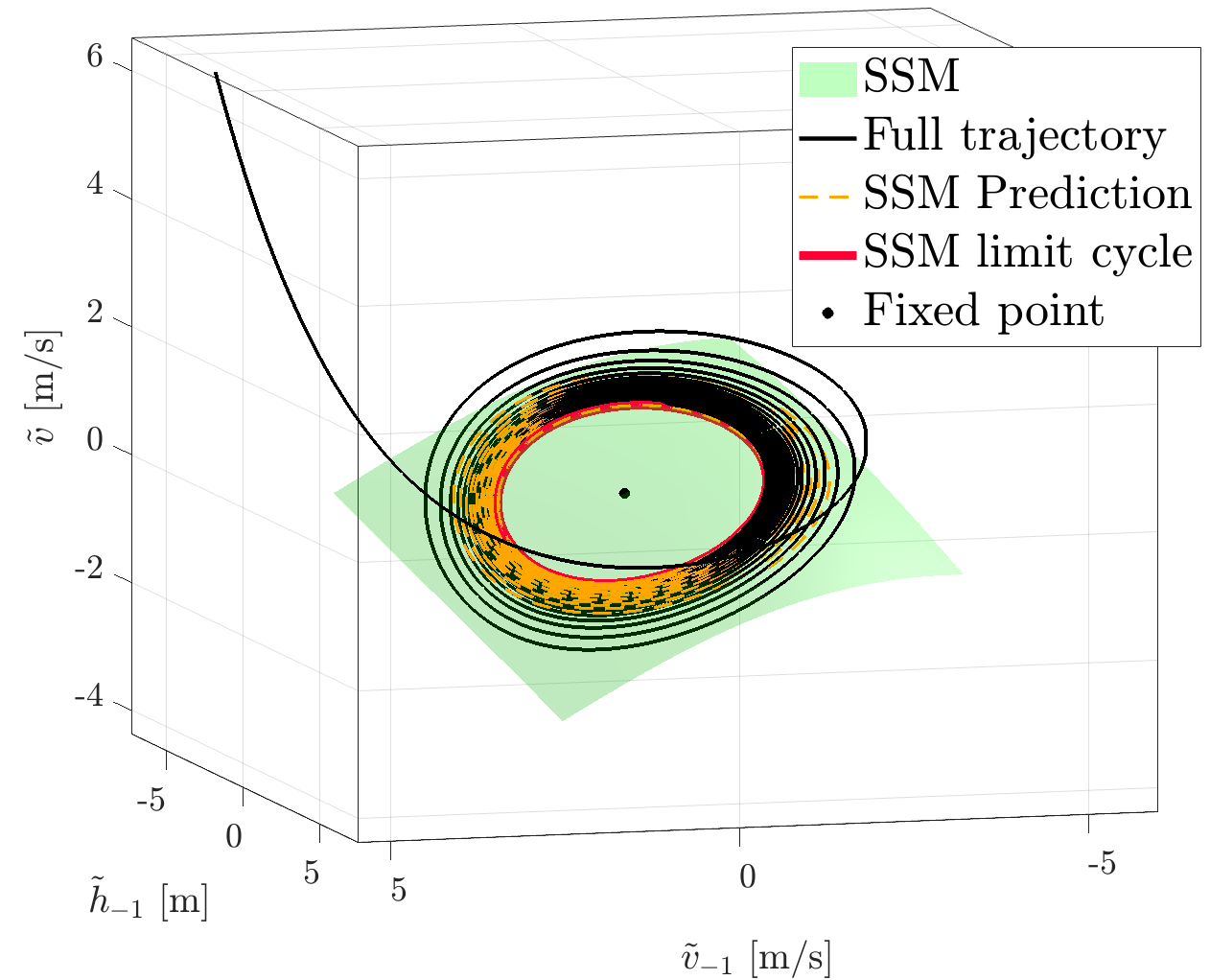}
    \caption*{(c) $\tau = 1.080\,\mathrm{s}$}
\end{minipage}

\caption{For system \eqref{eq:traffic-dde}, 3D projections of the delay embedding space showing the learned SSM (semi-transparent green), the full-system trajectory approaching the SSM (black), and the SSM prediction (orange dashed) for three unseen values of the delay parameter: (a) $\tau = 1.040\,\mathrm{s}$, (b) $\tau = 1.065\,\mathrm{s}$, and (c) $\tau = 1.080\,\mathrm{s}$. The plots illustrate the onset of the Hopf bifurcation, known to occur in the full system at approximately $\tau=1.064\,\mathrm{s}$, as identified by tracking the slowest pair of eigenvalues using the root-finding algorithm \parencite{AppeltansSilmMichiels2022,AppeltansMichiels2023}. They also show the corresponding increase in limit cycle amplitude, while demonstrating that the SSM-based reduced model retains predictive accuracy.}
\label{fig:Traffic_Parametric}
\end{figure}

\subsection{Parametric SSM-reduced model for the analysis of multiple local bifurcations of limit cycles in systems with distributed delay.}

We now consider the Cushing delay equation from \parencite{BuzaHaller2025, Cushing1977}, which is a distributed-delay equation with bounded delay. The parametric SSM-reduced ODE model explains the mechanism underlying qualitative changes in the system dynamics that lead to unusual behavior of observables as the time delay increases.

The system in question is governed by

\begin{equation}
\dot{x}(t)= b\int_{0}^{\tau} x(t-\vartheta)\,d\vartheta + a\bigl(x(t)-\sin(x(t))\bigr),
\label{eq:cushing-distributed}
\end{equation}
where $x(t)\in\mathbb{R}$, $\tau>0$ is the delay, and $a=1,b=-3$ are system parameters. 

An equation-driven reduction for $\tau=1$ was obtained in \parencite{BuzaHaller2025}, whose general formula can be applied directly. Alternatively, the system can be reformulated as a discrete-delay equation by introducing the auxiliary variable if one wishes to carry out the equation-driven reduction following \parencite{Szaksz2025SpectralSystems}:
\[
z(t)=\int_{t-\tau}^{t} x(s)\,ds,
\]
which transforms \eqref{eq:cushing-distributed} into the equivalent delay system
\begin{equation}
\begin{aligned}
\dot{x}(t) &= b\,z(t) + a\bigl(x(t)-\sin(x(t))\bigr),\\
\dot{z}(t) &= x(t)-x(t-\tau).
\end{aligned}
\label{eq:cushing-lifted}
\end{equation}
Thus, the scalar distributed-delay equation is recast as a 2D equation with a single discrete delay. The equilibrium of interest is the origin, $(x,z)=(0,0)$.

The equation is symmetric under the transformation $x \mapsto -x$; therefore, this symmetry is expected to be preserved in the phase portrait of the reduced dynamics. In particular, for instance, every fixed point \((\eta_1^*, \eta_2^*)\) should have a symmetric counterpart \((-\eta_1^*, -\eta_2^*)\). However, because the reduced dynamics are only approximated from data, they are not necessarily exactly odd. This slight symmetry breaking may further stem from the fact that the approximated manifold does not exactly satisfy odd symmetry.

We construct a parametric model following the approach in \parencite{AbbascianoEndreszStepanHaller2026JSV}. In the present setting, however, the construction of the SSMs can be justified directly by Theorem~\ref{thm:main1}, whose conditions are verified. By contrast, the models in \parencite{AbbascianoEndreszStepanHaller2026JSV} were obtained by analogy with the finite-dimensional SSM theory for ODEs. Linear interpolation suffices to obtain an SSM-based model across multiple limit-cycle saddle-node bifurcations near the origin (see Fig.~\ref{fig:parametricPP}). We use eight parameter values, $\tau = \{0.95\,\text{s},\; 0.97\,\text{s},\; 0.99\,\text{s},\; 1\,\text{s},\; 1.01\,\text{s},\; 1.02\,\text{s},\; 1.03\,\text{s},\; 1.035\,\text{s}\}
$, to construct individual autonomous 2D SSM models following the procedure detailed in Section \ref{sec:methodologies}. Cubic-order polynomials are employed for the manifold approximation and 13th-order polynomials for the reduced dynamics, required to ensure accuracy of the SSM-based model at every training parameter point. Each model is trained on 26 trajectories, with 4 reserved for testing. This sampling is more than sufficient to achieve high accuracy.
\newcommand{\imgWZwidth}{0.32\textwidth}

\begin{figure}[H]
\centering

% First row
\begin{minipage}{\imgWZwidth}
    \centering
    \includegraphics[width=\linewidth]{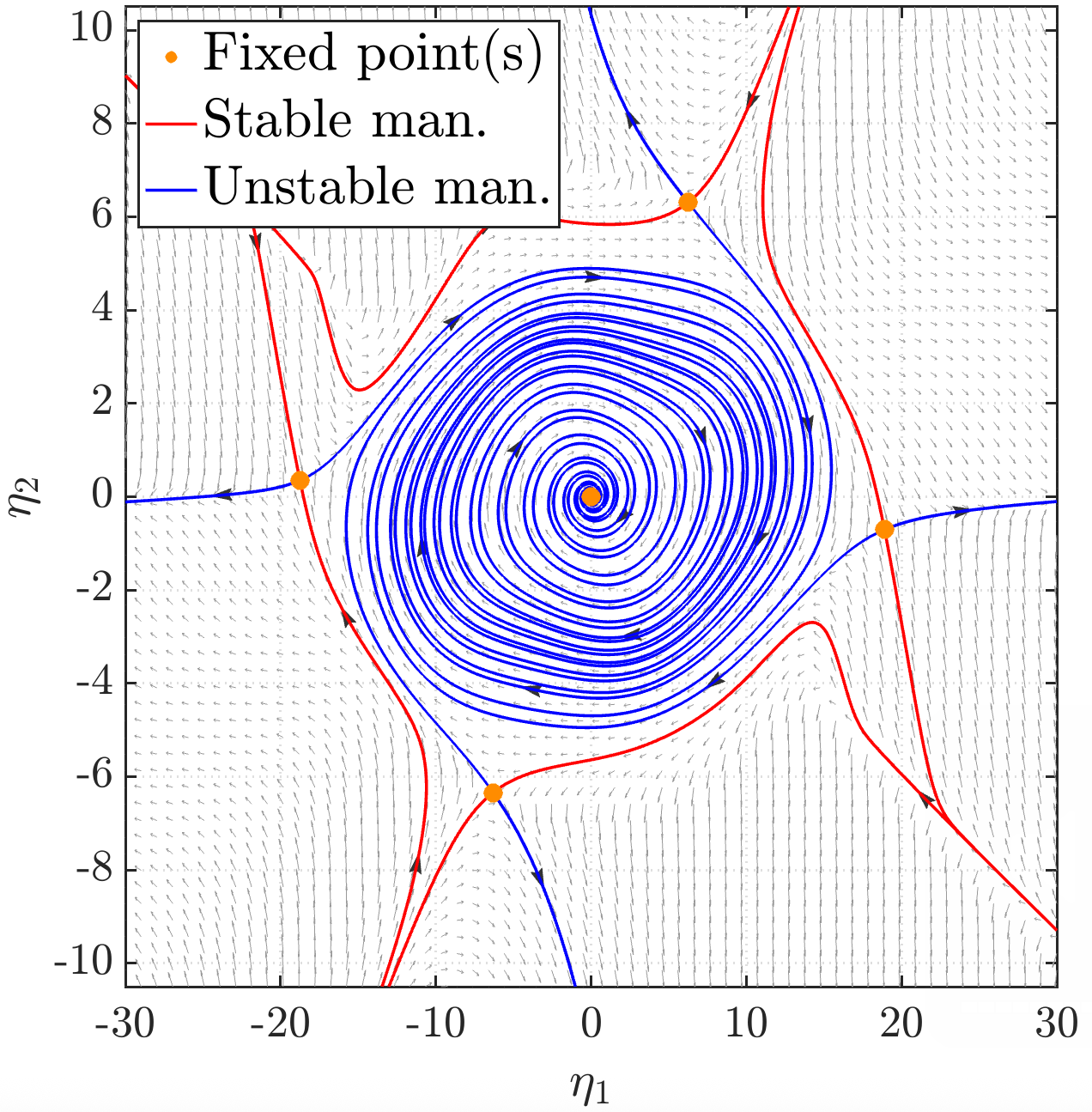}
    \caption*{(a) $\tau=0.98\,\mathrm{s}$}
\end{minipage}\hfill
\begin{minipage}{\imgWZwidth}
    \centering
    \includegraphics[width=\linewidth]{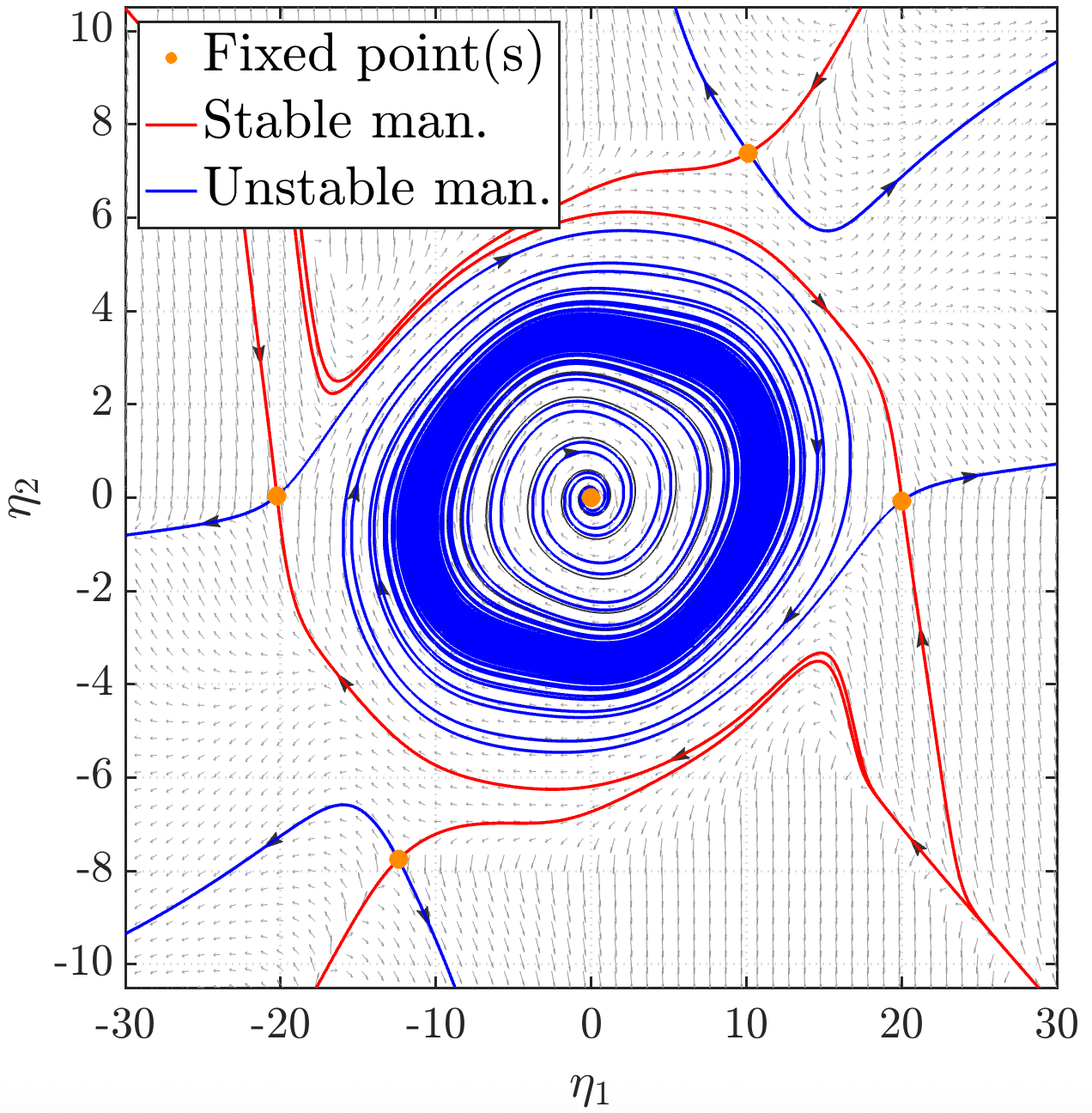}
    \caption*{(b) $\tau=1.0074\,\mathrm{s}$}
\end{minipage}\hfill
\begin{minipage}{\imgWZwidth}
    \centering
    \includegraphics[width=\linewidth]{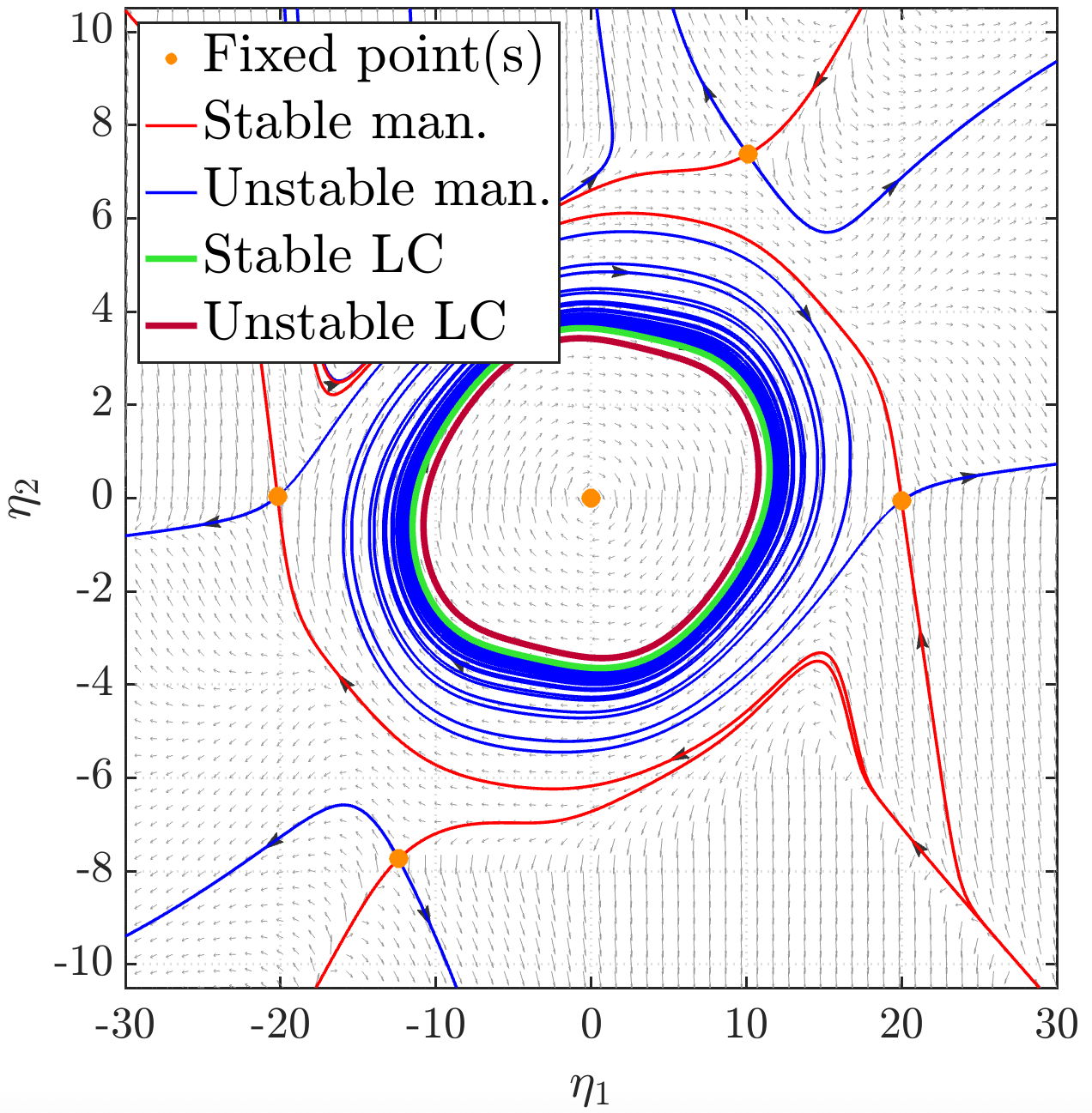}
    \caption*{(c) $\tau=1.0078\,\mathrm{s}$}
\end{minipage}

\vspace{0.5cm}

% Second row
\begin{minipage}{\imgWZwidth}
    \centering
    \includegraphics[width=\linewidth]{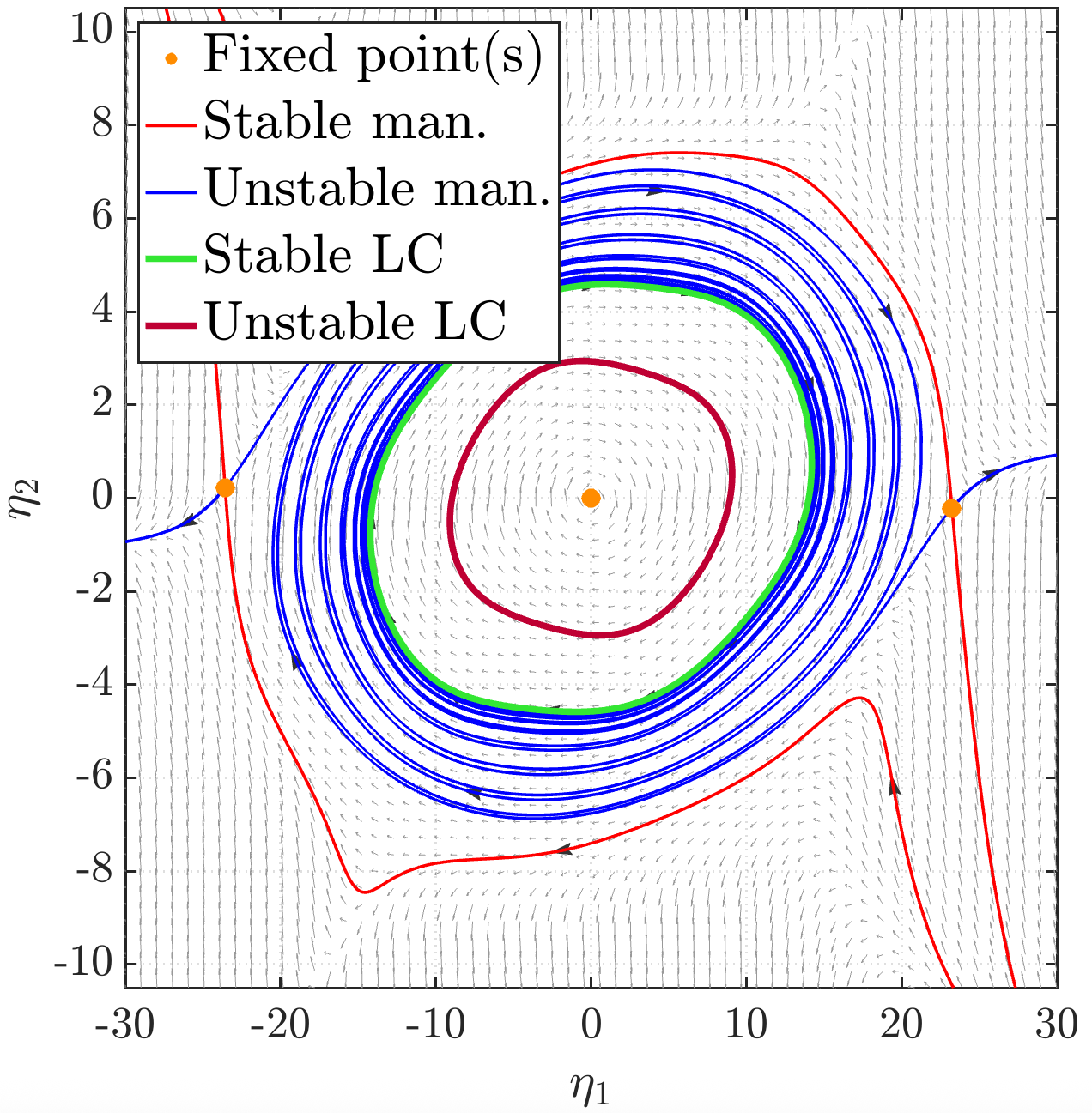}
    \caption*{(d) $\tau=1.025\,\mathrm{s}$}
\end{minipage}\hfill
\begin{minipage}{\imgWZwidth}
    \centering
    \includegraphics[width=\linewidth]{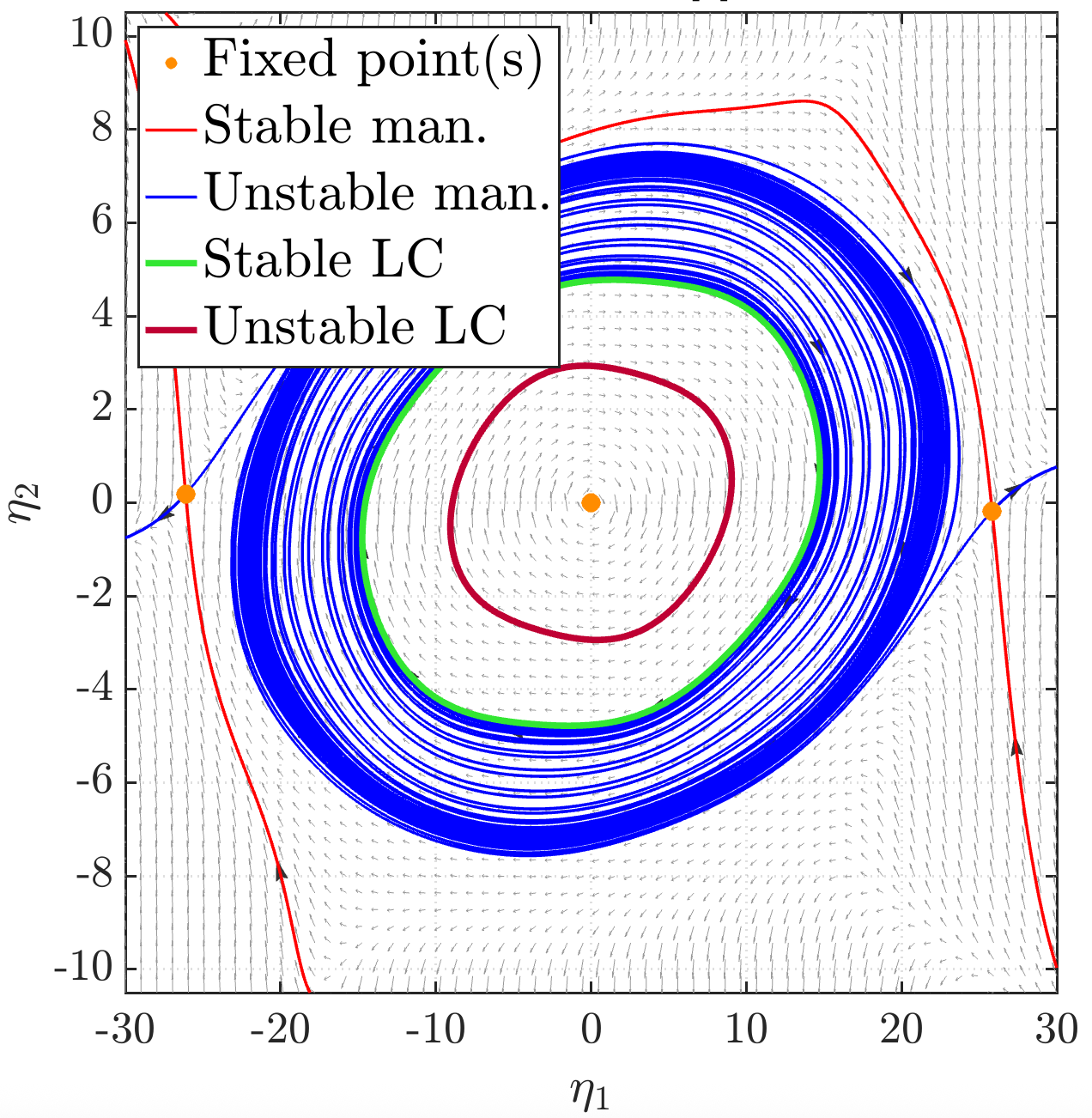}
    \caption*{(e) $\tau=1.0294\,\mathrm{s}$}
\end{minipage}\hfill
\begin{minipage}{\imgWZwidth}
    \centering
    \includegraphics[width=\linewidth]{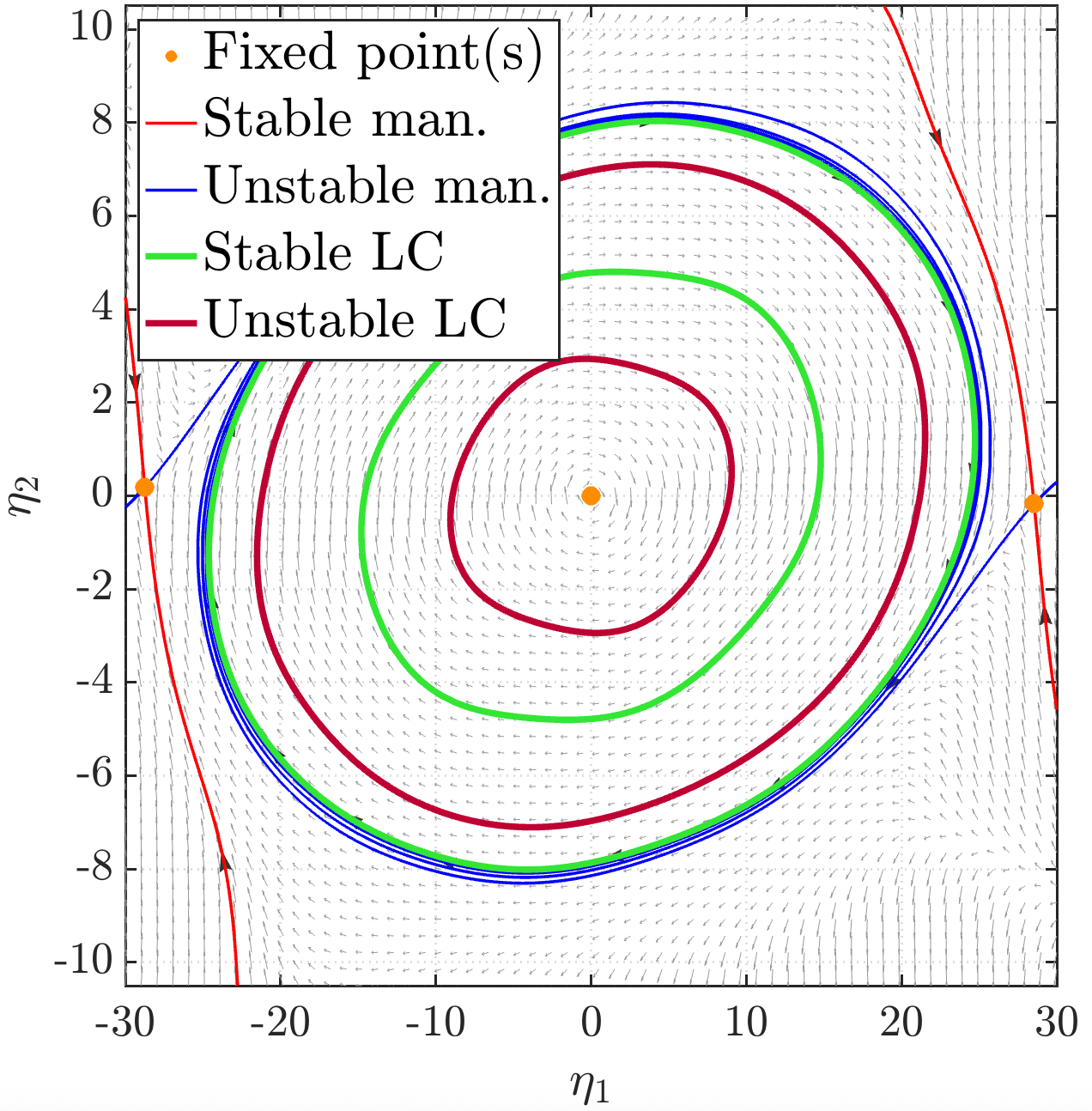}
    \caption*{(f) $\tau=1.0299\,\mathrm{s}$}
\end{minipage}

\caption{Predicted and verified portraits of the parametric SSM-reduced dynamics of system \eqref{eq:cushing-distributed} at six previously unseen values of the delay $\tau$, shown for increasing $\tau$. The reduced dynamics captures a sequence of two saddle-node bifurcations of limit cycles, which explains the qualitative change of the dynamics underlying the unusual behavior of the observables.}
\label{fig:parametricPP}
\end{figure}

Limit cycles are identified as fixed points of the Poincar\'e map, obtained by tracking oriented intersections of trajectories of the SSM-reduced ODE with a chosen codimension-one section in the reduced space, a straight line within the 2D tangent space. This yields a further reduction to a one-dimensional (1D) discrete map of the original infinite-dimensional DDE. However, this reduction loses information on coexisting isolated fixed points, restricting the validity of the reduced dynamics to regions where only the origin equilibrium and limit cycles exist.

A bifurcation diagram can then be constructed from the Poincar\'e map by plotting the positions of limit cycles in the section against the delay parameter, revealing two subsequent saddle-node bifurcations of periodic orbits.

The parametric SSM, corresponding to the 2D slow stable subspace at the origin extended by the parameter direction, approximates well an extended center manifold (i.e. a center manifold in the delay embedding space extended with the delay parameter direction) associated with a saddle-node limit cycle, for the case of both bifurcations, arising near $\tau \approx 1.0075$ and $\tau \approx 1.0295$. Near each bifurcation in the parameter direction, the emerging stable and unstable limit cycles lie on such an extended center manifold emanating from the saddle-node limit cycle, which is locally captured by the SSM. Consequently, a single parametric SSM is able to model the birth and evolution of these newborn limit cycles across the parameter sweep.

We now extend our analysis to a broader range of parameter values, rather than restricting attention to local behavior near critical parameter values. Specifically, by considering the 1D Poincar\'e section in the reduced space and the associated Poincar\'e map, lifted in the delay embedding space in the illustration we provide in Figure \ref{fig:ssm_bifurcation}, we can more clearly observe that a 2D SSM (associated with the stable eigenvalue and the zero eigenvalue corresponding to the parameter direction) emanating from the origin provides a good approximation, at least locally in phase space extended with the parameter and up to a sufficiently high polynomial order of a Taylor expansion, of the 2D spectral submanifold associated with the saddle-node bifurcating eigenvalue and the center eigenvalue (corresponding to the parameter direction)  emanating from the inner saddle-node limit cycle around $\tau_{2,SN}$ and of that emanating from the outer saddle-node limit cycle $\tau_{2,SN}$ (fixed points of the Poincar\'e map). As such, the SSM emanating from the origin captures, over a global range of the parameter, the evolution of both the inner pair of stable and unstable limit cycles and the outer pair of stable and unstable limit cycles, simultaneously.

\begin{figure}[H]
\centering
\begin{adjustbox}{max width=\textwidth}
\begin{tikzpicture}[
    >=Latex,
    font=\Large,
    every node/.style={inner sep=1.5pt},
    every label/.style={font=\Large},
    axisback/.style={->, thin, black},
    axisfront/.style={->, semithick, black},
    axislabel/.style={font=\large, fill=white, inner sep=1pt},
    section/.style={draw=black!55, fill=gray!10, line width=0.45pt},
    ssm/.style={black, line width=1.25pt},
    cmstyle/.style={blue!75!black, line width=2.0pt, line cap=round},
    umstyle/.style={red!75!black, line width=2.0pt, line cap=round},
    smstyle/.style={teal!70!black, line width=2.0pt, line cap=round},
    paramline/.style={black, very thick, ->},
    fp/.style={circle, fill=black, inner sep=1.8pt},
    sndot/.style={circle, fill=blue!75!black, inner sep=2.0pt},
    ulcdot/.style={circle, fill=red!75!black, inner sep=2.0pt},
    slcdot/.style={circle, fill=teal!70!black, inner sep=2.0pt},
    taupt/.style={circle, fill=black, inner sep=2.2pt}
]

% ------------------------------------------------------------
% positions
% ------------------------------------------------------------
\def\xA{-10.4}
\def\xB{-6.2}
\def\xC{-2.0}
\def\xD{2.2}
\def\xE{6.4}

\def\yPanels{3.10}
\def\yLegend{8.15}
\def\yParam{0.2}
\def\Wsec{2.35}

% 3D-like in-plane projection corresponding to a 10 degree rotation
% around the vertical axis x(t+2\Delta t)
\def\cosa{0.9848}
\def\sina{0.1736}

% ------------------------------------------------------------
% legend (three rows)
% ------------------------------------------------------------
\begin{scope}[shift={(0,\yLegend)}]

    % first row
    \node[fp] at (-11.2,0.75) {};
    \node[anchor=west] at (-11.0,0.75) {$\mathrm{stable\ fixed\ point}$};

    \draw[section] (-5.85,0.55) rectangle (-4.25,0.95);
    \node[anchor=west] at (-4.05,0.75) {$S\ \mathrm{(Poincar\acute{e}\ section)}$};

    \node[sndot] at (1.15,0.75) {};
    \node[anchor=west] at (1.35,0.75) {$\mathrm{saddle\text{-}node\ limit\ cycle}$};

    % second row
    \node[ulcdot] at (-11.2,0.10) {};
    \node[anchor=west] at (-11.0,0.10) {$\mathrm{unstable\ limit\ cycle}$};

    \node[slcdot] at (-5.85,0.10) {};
    \node[anchor=west] at (-5.65,0.10) {$\mathrm{stable\ limit\ cycle}$};

    \draw[ssm] (1.15,0.10) -- (2.15,0.10);
    \node[anchor=west] at (2.35,0.10) {$\mathrm{SSM}$};

    % third row
    \draw[cmstyle] (-11.2,-0.55) -- (-10.2,-0.55);
    \node[anchor=west] at (-10.0,-0.55) {$\mathrm{center\ manifold}$};

    \draw[umstyle] (-5.85,-0.55) -- (-4.85,-0.55);
    \node[anchor=west] at (-4.65,-0.55) {$\mathrm{unstable\ manifold}$};

    \draw[smstyle] (1.15,-0.55) -- (2.15,-0.55);
    \node[anchor=west] at (2.35,-0.55) {$\mathrm{stable\ manifold}$};

\end{scope}

% ------------------------------------------------------------
% panel helpers
% ------------------------------------------------------------
\newcommand{\BeginPanel}[1]{%
\begin{scope}[shift={#1}]
\begin{scope}[x={(\cosa cm,\sina cm)},y={(0cm,1cm)}]
    \coordinate (O) at (0,0);
    \coordinate (A) at (0.00,-1.42);
    \coordinate (B) at (\Wsec,-1.42);
    \coordinate (C) at (\Wsec,2.18);
    \coordinate (D) at (0.00,2.18);
    \draw[section] (A)--(B)--(C)--(D)--cycle;
    \node[gray!70!black, font=\large] at ({\Wsec-0.18},1.88) {$S$};
}

\newcommand{\EndPanelLabeled}{%
    \draw[axisback] (O) -- (-1.35,-1.18);
    \node[fp] at (O) {};
    \draw[axisfront] (O) -- (0.00,2.40);
    \draw[axisfront] (O) -- (2.40,0.00);

    \node[axislabel, anchor=south] at (-1.2,2.18)
      {\shortstack{$x(t+3\Delta t)$\\ $\dots$\\ $x(t+k\Delta t)$}};
    \node[axislabel, anchor=east] at (-0.52,-1.83)
      {$x(t+2\Delta t)$};
    \node[axislabel, anchor=west] at (2.52,0.14)
      {$x(t)$};
\end{scope}
\end{scope}
}

\newcommand{\EndPanel}{%
    \draw[axisback] (O) -- (-1.35,-1.18);
    \node[fp] at (O) {};
    \draw[axisfront] (O) -- (0.00,2.40);
    \draw[axisfront] (O) -- (2.40,0.00);
\end{scope}
\end{scope}
}

% ------------------------------------------------------------
% PANEL 1
% ------------------------------------------------------------
\BeginPanel{(\xA,\yPanels)}
    \draw[ssm]
        (0.00,0.00)
        .. controls (0.62,0.00) and (1.55,0.10) .. (2.35,0.74);
\EndPanelLabeled

% ------------------------------------------------------------
% PANEL 2
% ------------------------------------------------------------
\BeginPanel{(\xB,\yPanels)}
    \draw[ssm]
        (0.00,0.00)
        .. controls (0.55,0.00) and (1.05,0.10) .. (1.26,0.31)
        .. controls (1.48,0.47) and (1.88,0.69) .. (2.35,1.02);

    \draw[cmstyle]
        (0.55,0.00)
        .. controls (1.05,0.10) and (1.26,0.31) .. (1.26,0.31)
        .. controls (1.48,0.47) and (1.88,0.69) .. (2.05,0.81);

    \node[sndot] at (1.26,0.31) {};
\EndPanel

% ------------------------------------------------------------
% PANEL 3
% ------------------------------------------------------------
\BeginPanel{(\xC,\yPanels)}
    \draw[ssm]
        (0.00,0.00)
        .. controls (0.48,0.00) and (0.82,0.18) .. (1.04,0.40)
        .. controls (1.16,0.48) and (1.30,0.56) .. (1.46,0.65)
        .. controls (1.70,0.82) and (2.00,1.03) .. (2.35,1.28);

    \draw[umstyle]
        (0.82,0.18)
        .. controls (0.48,0.00) and (0.82,0.18) .. (1.04,0.40)
        .. controls (1.16,0.48) and (1.30,0.56) .. (1.28,0.54);

    \draw[smstyle]
        (1.28,0.54)
        .. controls (1.30,0.56) and (1.46,0.65) .. (1.46,0.65)
        .. controls (1.70,0.82) and (2.00,1.03) .. (1.86,0.91);

    \node[ulcdot] at (1.04,0.40) {};
    \node[slcdot] at (1.46,0.65) {};
\EndPanel

% ------------------------------------------------------------
% PANEL 4
% ------------------------------------------------------------
\BeginPanel{(\xD,\yPanels)}
    \draw[ssm]
        (0.00,0.00)
        .. controls (0.60,0.00) and (0.85,0.08) .. (1.00,0.13)
        .. controls (1.18,0.19) and (1.42,0.36) .. (1.62,0.58)
        .. controls (1.78,0.74) and (1.92,0.91) .. (2.05,1.05)
        .. controls (2.15,1.16) and (2.25,1.30) .. (2.35,1.46);

    \draw[umstyle]
        (0.60,0.00)
        .. controls (0.85,0.08) and (1.00,0.13) .. (1.00,0.13)
        .. controls (1.18,0.19) and (1.42,0.36) .. (1.38,0.35);

    \draw[smstyle]
        (1.38,0.35)
        .. controls (1.42,0.36) and (1.62,0.58) .. (1.62,0.58)
        .. controls (1.78,0.74) and (1.92,0.91) .. (1.95,0.92);

    \draw[cmstyle]
        (1.95,0.92)
        .. controls (1.92,0.91) and (2.05,1.05) .. (2.05,1.05)
        .. controls (2.15,1.16) and (2.25,1.30) .. (2.33,1.42);

    \node[ulcdot] at (1.00,0.13) {};
    \node[slcdot] at (1.62,0.58) {};
    \node[sndot] at (2.05,1.05) {};
\EndPanel

% ------------------------------------------------------------
% PANEL 5
% ------------------------------------------------------------
\BeginPanel{(\xE,\yPanels)}
    \draw[ssm]
        (0.00,0.00)
        .. controls (0.22,0.00) and (0.34,0.10) .. (0.44,0.24)
        .. controls (0.62,0.42) and (0.90,0.64) .. (1.30,0.96)
        .. controls (1.44,1.05) and (1.58,1.13) .. (1.70,1.18)
        .. controls (1.84,1.28) and (1.98,1.40) .. (2.12,1.49)
        .. controls (2.19,1.54) and (2.27,1.60) .. (2.35,1.66);

    \draw[umstyle]
        (0.22,0.00)
        .. controls (0.34,0.10) and (0.44,0.24) .. (0.44,0.24)
        .. controls (0.62,0.42) and (0.90,0.64) .. (0.94,0.67);

    \draw[smstyle]
        (0.94,0.67)
        .. controls (1.04,0.75) and (1.18,0.86) .. (1.30,0.96)
        .. controls (1.44,1.05) and (1.58,1.13) .. (1.52,1.10);

    \draw[umstyle]
        (1.52,1.10)
        .. controls (1.58,1.13) and (1.70,1.18) .. (1.70,1.18)
        .. controls (1.84,1.28) and (1.98,1.40) .. (2.02,1.41);

    \draw[smstyle]
        (1.90,1.34)
        .. controls (1.96,1.38) and (2.00,1.40) .. (2.02,1.41)
        .. controls (2.06,1.44) and (2.12,1.49) .. (2.12,1.49)
        .. controls (2.19,1.54) and (2.27,1.60) .. (2.35,1.66);

    \node[ulcdot] at (0.44,0.24) {};
    \node[slcdot] at (1.30,0.96) {};
    \node[ulcdot] at (1.70,1.18) {};
    \node[slcdot] at (2.12,1.49) {};
\EndPanel

% ------------------------------------------------------------
% parameter line
% ------------------------------------------------------------
\draw[paramline] (-11.8,\yParam) -- (9.0,\yParam) node[right] {$\tau$};

\node[taupt,label={[font=\Large]below:{$\tau<\tau_{1,\mathrm{SN}}$}}] at (\xA,\yParam) {};
\node[taupt,label={[font=\Large]below:{$\tau_{1,\mathrm{SN}}$}}] at (\xB,\yParam) {};
\node[taupt,label={[font=\Large]below:{$\tau_{1,\mathrm{SN}}<\tau<\tau_{2,\mathrm{SN}}$}}] at (\xC,\yParam) {};
\node[taupt,label={[font=\Large]below:{$\tau_{2,\mathrm{SN}}$}}] at (\xD,\yParam) {};
\node[taupt,label={[font=\Large]below:{$\tau>\tau_{2,\mathrm{SN}}$}}] at (\xE,\yParam) {};

\end{tikzpicture}
\end{adjustbox}
\caption{Illustration of the codimension one Poincar\'e section viewed in the delay embedding space extended with the bifurcation delay parameter. Limit cycles can be found as fixed points of the associated 1D Poincar\'e map of the 2D SSM-reduced dynamics via e.g. Newton-Raphson iteration. It is shown how one SSM-reduced ODE model is able to contain manifolds of the limit cycles along the parameter thus giving accurate reduced model of the full, infinite dimensional DDE across multiple saddle node bifurcations of periodic orbits. One should note that the manifolds (unstable/stable/center) of limit cycles are well approximated by the SSM at least locally in the phase space, but far from the fixed point from which they emanate they might in general deviate off the SSM due to higher order terms which are not captured by the fixed utilized order of the SSM.}
\label{fig:ssm_bifurcation}
\end{figure}

We emphasize that our SSM is only an approximation, as the saddle-node limit cycles, together with the newborn unstable and stable limit cycles, lie on a particular extended center manifold of a family of extended center manifolds. Moreover, these invariant objects also lie on a specific SSM chosen from a family of SSMs, which are not unique in the present setting of delay differential equations. 

If the order of polynomials used to approximate the SSM from data were increased, the discrepancy between the computed SSM and the actual particular SSM on which the saddle-node limit cycles, unstable limit cycles, and stable limit cycles lie would also increase, since higher-order polynomials would select a more specific member of the non-unique family of SSMs. Guided by Taylor series considerations, for lower polynomial orders, this discrepancy is less pronounced and we can conflate all the infinitely many SSMs with the one we have computed. This approximation provides a good representation of the SSM that actually contains the saddle-node limit cycles, unstable limit cycles, and stable limit cycles, as well as of the particular extended center manifold of the saddle-node limit cycle on which the unstable limit cycle and stable limit cycle are going to be born. Finally, we stress that our approach provides a polynomial approximation inferred from data, rather than computating the Taylor series.

More generally, away from the bifurcation values, each stable (respectively unstable) limit cycle possesses a unique 1D stable (respectively unstable) manifold in the Poincar\'e section for a fixed parameter value. Each of these manifolds is captured, i.e. well approximated at least locally in phase space and up to a sufficiently high polynomial order, by the 1D computed SSM emanating from the origin.

An alternative approach to constructing the bifurcation diagram is to compute the extended normal-form transformation and introduce polar coordinates. This yields a 2D ODE in $(\dot{\rho}, \dot{\vartheta})$, where the $\dot{\rho}$ equation decouples from $\dot{\vartheta}$. A parametric study of the $\dot{\rho}$ equation (e.g., via interpolation) then provides a bifurcation diagram of $\rho$ versus $\tau$. However, this approach is difficult to implement from data, as the extended normal-form transformation is highly sensitive and does not interpolate reliably across parameter values. Instead, one could proceed in an equation-driven manner, which is computationally demanding, especially given the high polynomial order (13th order in this case) required to obtain an accurate parametric SSM-reduced model capturing the two local bifurcations of four coexisting limit cycles. 

This behavior is inherently nonlinear and cannot be captured by linear models. For instance, any principal Koopman eigenfunction whose domain contains the fixed point and the inner unstable limit cycle must identically vanish along both, thereby trivializing the dynamics.

Predictions of the parametric SSM model remain accurate for unseen delay values, as shown in Fig.~\ref{fig:unseenPar}, with the NMTE remaining below $1.2\%$. Panel (a) shows the onset of a saddle-node limit cycle, indicated by the gradual decay of oscillation amplitude toward a stable fixed point. Panels (b) and (c) show stable limit cycles after the first bifurcation, while panel (d) exhibits the coexistence of two limit cycles with different amplitudes after the second bifurcation. 

\newcommand{\imgWZZwidth}{0.47\textwidth}

\begin{figure}[H]
\centering

% First row
\begin{minipage}{\imgWZZwidth}
    \centering
    \includegraphics[width=\linewidth]{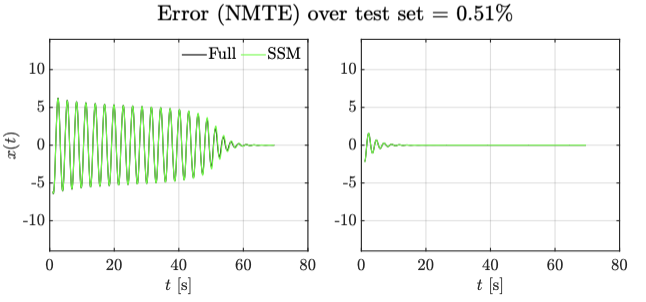}
    \caption*{(a) $\tau=1.005\,\mathrm{s}$}
\end{minipage}\hspace{0.02\textwidth}
\begin{minipage}{\imgWZZwidth}
    \centering
    \includegraphics[width=\linewidth]{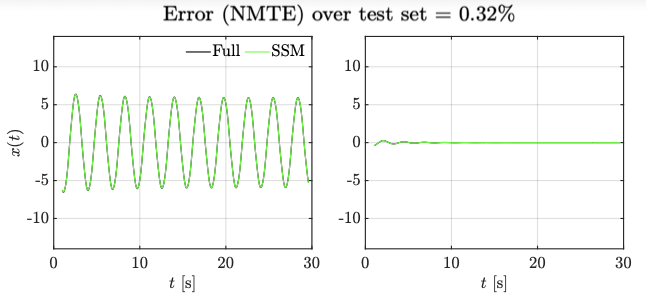}
    \caption*{(b) $\tau=1.0125\,\mathrm{s}$}
\end{minipage}

\vspace{0.5cm}

% Second row
\begin{minipage}{\imgWZZwidth}
    \centering
    \includegraphics[width=\linewidth]{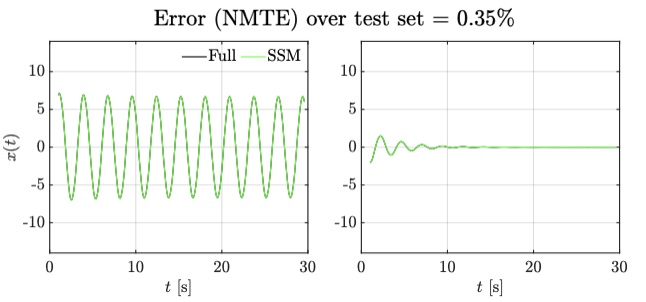}
    \caption*{(c) $\tau=1.025\,\mathrm{s}$}
\end{minipage}\hspace{0.02\textwidth}
\begin{minipage}{\imgWZZwidth}
    \centering
    \includegraphics[width=\linewidth]{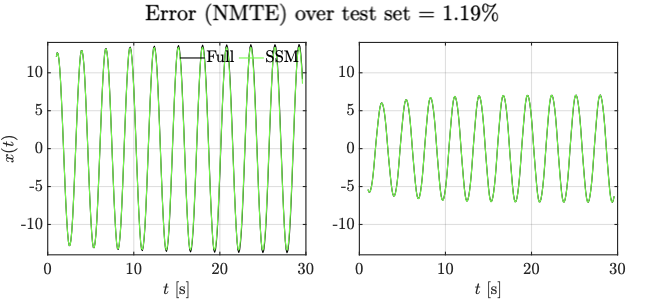}
    \caption*{(d) $\tau=1.032\,\mathrm{s}$}
\end{minipage}

\caption{Validation of the parametric SSM model for system \eqref{eq:cushing-distributed} at unseen delay values. The model captures the onset of a saddle-node limit cycle (a), stable limit cycles (b,c), and the coexistence of two limit cycles (d). The NMTE is reported in each panel.}
\label{fig:unseenPar}
\end{figure}

\subsection{SSM-reduced modeling from periodically delayed experimental data}\label{section5}
Here, we verify the existence of an SSM in the data-driven modeling of a PD-controlled mechanical system with delayed feedback and quantization using experimental data collected from the setup shown in Figure~\ref{fig:expSetups}. The experiments were conducted at the Department of Applied Mechanics, Budapest University of Technology and Economics (BME), under the supervision of Professor Gábor Stépán. These data were previously made publicly available in \parencite{AbbascianoEndreszStepanHaller2026JSV}. In that study, SSMs are employed to model global bifurcations and chaotic dynamics.

\begin{figure}[H]
  \centering
  \includegraphics[width=0.45\textwidth]{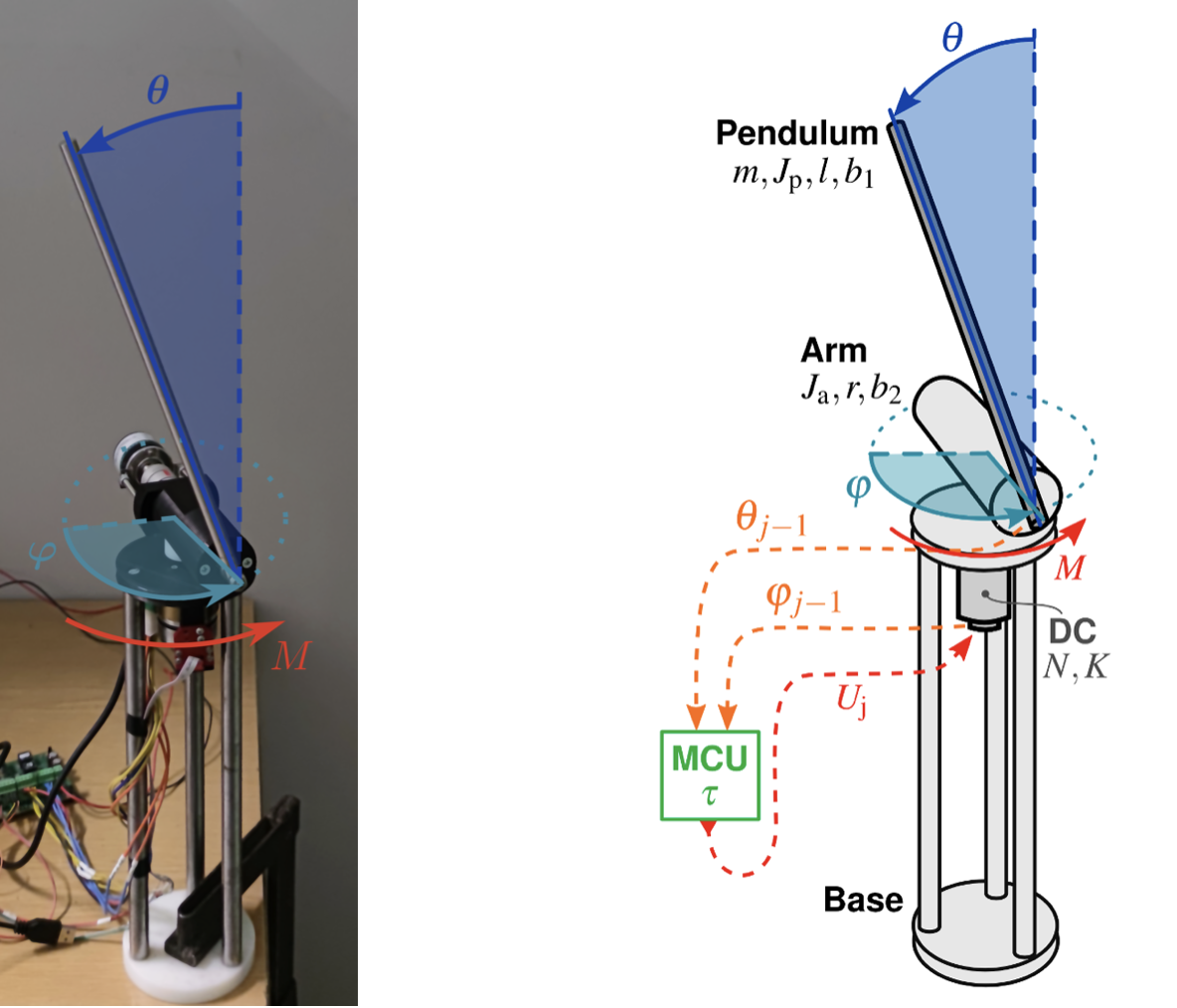}
  \caption{Left: experimental rig used to control the Furuta pendulum. Right: schematic of the experimental setup (adopted from \parencite{Vizi2024}).}
  \label{fig:expSetups}
\end{figure}

In the experimental setting considered there, a periodic delay $\rho(t)$ arises from the ZOH logic of the digital controller implementation:
\begin{equation}\label{eq:rho}
  \rho(t)
  = t - \Delta t\,
    \Bigl\lfloor \tfrac{t}{\Delta t}\Bigr\rfloor + r\,\Delta t,
  \qquad r\in\mathbb{N},
\end{equation}
where $\lfloor\cdot\rfloor$ denotes the integer part. The resulting controller simultaneously accounts for the constant feedback (discrete) delay $r\,\Delta t$ (with $r=1$ for the system studied here) and the additional delay induced by the ZOH mechanism,
\[
t - \Delta t\,
    \Bigl\lfloor \tfrac{t}{\Delta t}\Bigr\rfloor,
\]
which is periodic in time. The closed-loop system can therefore be written as
\begin{equation}\label{eq:digContr}
\dot{x}(t)=
\underbrace{f\bigl(x(t)\bigr)}_{\text{uncontrolled system}} +\quad \underbrace{g(x(t-\rho(t)))}_{\text{linear controller}}
\end{equation}
The control action is updated at the beginning of each sampling interval and then held constant over the interval of length $\Delta t$, as shown in Figure \ref{fig:periodic_delay}. This mechanism produces a periodic sawtooth-like delay, as discussed in \parencite{Insperger2015}, where an expression for its average value is also derived.

\newcommand{\imgPD}{0.6\textwidth}
\begin{figure}[H]
  \centering
  \includegraphics[width=\imgPD]{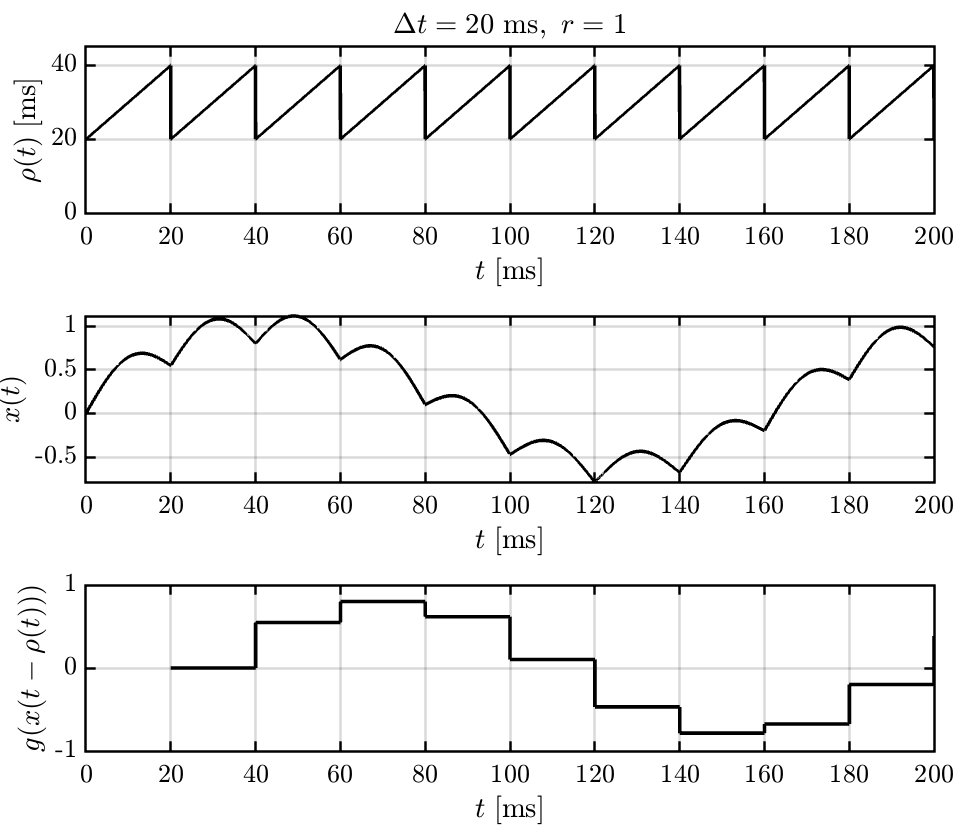}
  \caption{Illustration of the periodic-delay mechanism induced by the ZOH for a scalar DDE of the form \eqref{eq:digContr}, with $f\in C^1$, sampling time $\Delta t=20\,\mathrm{ms}$, and feedback delay $r=1$. Top: periodic delay $\rho(t)$. Middle: state trajectory $x(t)$. Bottom: delayed control action $g(x(t-\rho(t)))$, shown as a piecewise-constant signal due to the sample-and-hold action of the digital controller.}
  \label{fig:periodic_delay}
\end{figure}

In light of the new theoretical results detailed in Theorem~\ref{thm:main}, which extend the Theorem~\ref{thm:main1} \parencite{BuzaHaller2025} to DDEs with periodic delay, we carry out data-driven SSM-reduction following \ref{sec:methodologies}. 

Due to the ZOH logic, the control action $g$ is in theory non-smooth. In reality, however, the control action is smooth with high gradients instead of nonsmooth, as a jump at the beginning of each and every sampling interval would require infinite power to occur, which is not possible. The system is thus $C^1$, allowing us to apply Theorem~\ref{thm:main} to digital controllers \eqref{eq:digContr}.

We first reduce the system to a Poincar\'e section associated with sampling at intervals of length $\Delta t$. This yields an autonomous DDE whose Poincar\'e map is $C^1$ and, by Theorem~\ref{thm:main}, admits infinitely many $d-$dimensional SSMs in the Poincare' section emanating from the equilibrium at $0$, all tangent to a selected spectral subspace and with regularity $C^1$. Infinitely many periodic SSMs and the associated periodic spectral subspace exist with regularity $C^1$ as well in the extended phase space $X\times S^1$. These periodic objects can be reconstructed from data by periodically interpolating the SSMs and spectral subspaces obtained in different Poincar\'e sections.

By subsampling the experimental time series of the observable $\vartheta$  at intervals $\Delta t$, we obtain observations corresponding to intersections of the continuous-time flow with a codimension-one Poincar\'e section $S$ in the delay-embedding space extended with time $\phi \doteq t \bmod \Delta t \in S^1$. The SSMLearn algorithm is then used, following Section \ref{sec:methodologies}, to fit a polynomial approximation of the SSM in $S$, whose existence is guaranteed by Theorem \ref{thm:main}, emanating from the unstable upright equilibrium of the pendulum in \parencite{AbbascianoEndreszStepanHaller2026JSV}, corresponding to the angular position of the vertical arm $\theta=0$.

The reduced dynamics on the manifold are subsequently learned by projecting the data onto the tangent space and identifying a discrete-time map that approximates the Poincar\'e map. In this work, we employ RBFs, as the quantization inherent in the digital controller induces microchaos in the feedback-delayed PD controlled system \parencite{Haller1996, Stepan2017}. Finally, the SSM in the full extended delay-embedding space can be reconstructed by gluing the SSMs computed on each Poincar\'e section, parametrized by the phase variable $\phi\in S^1$.

The chaotic attractor has an estimated fractal dimension of approximately $4.3$. Assuming that the chaotic attractor is contained in one of the SSMs in $X$, we seek an SSM of sufficiently high dimension to ensure a correct embedding of the attractor in the reduced space within the delay-coordinate space \parencite{SauerYorkeCasdagli1991}. In this sense, a $10$-dimensional embedding is adopted as a sufficient, not necessary choice. We therefore fit a first-order, $10$-dimensional SSM, with the reduced dynamics approximated by radial basis functions (RBFs). The model is trained on $4$ trajectories and tested on $4$ additional trajectories (see Figure~\ref{fig:CorrDimTrainTest}).

\begin{figure}[H]
  \centering
  \begin{minipage}[t]{0.50\textwidth}
    \centering
    \includegraphics[width=\linewidth]{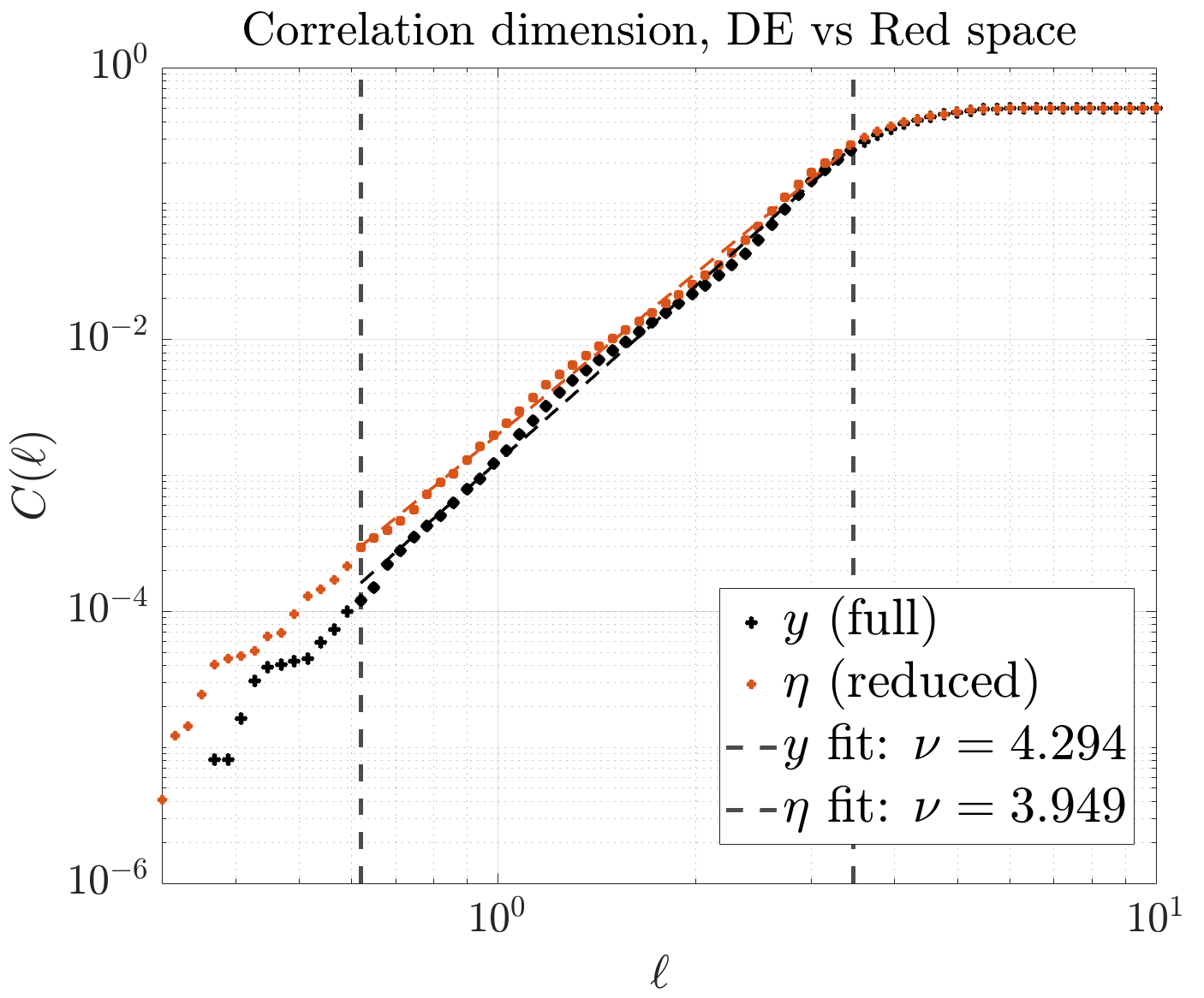}
  \end{minipage}\hfill
  \begin{minipage}[t]{0.48\textwidth}
    \centering
    \includegraphics[width=\linewidth]{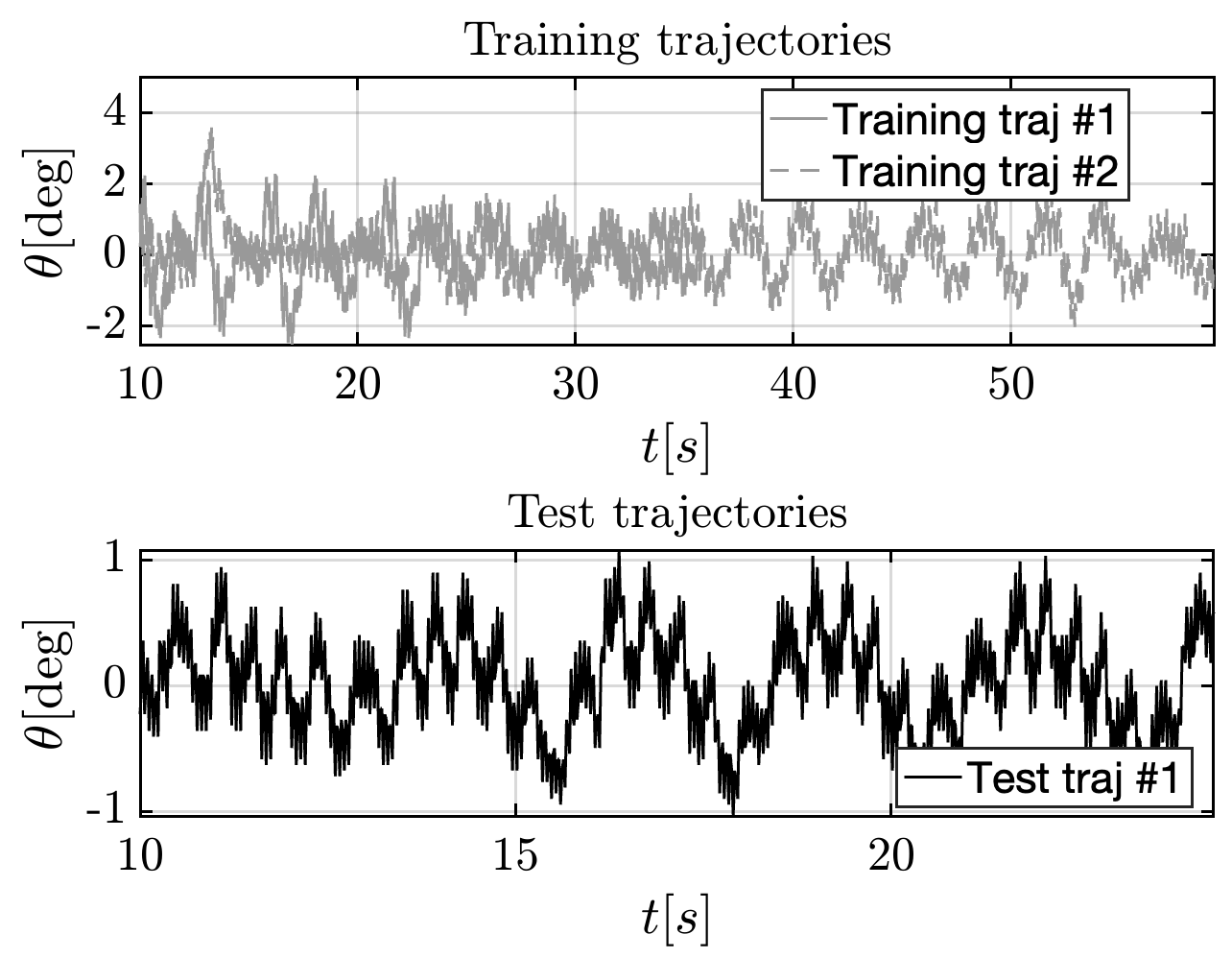}
  \end{minipage}
  \caption{Left: correlation dimension of the chaotic attractor, yielding a value of approximately $4.3$. Right: training and test trajectories used for identification and validation of the first-order, $10$-dimensional SSM with RBF-approximated SSM-reduced dynamics.}
  \label{fig:CorrDimTrainTest}
\end{figure}

For the delay-coordinate map, the lag is chosen to coincide with the zero-order-hold sampling time, namely $\Delta t = 20~\mathrm{ms}$, to retrieve the intersections with the Poincar\'e section. To improve accuracy, we use an overembedding dimension of $15$.

As discussed in the previous examples, Figure~\ref{fig:PDCombined} shows short-time trajectory predictions, the PDFs of reduced coordinates and a three-dimensional projection of the delay embedding, where the two test trajectories are seen to approach the chaotic invariant set.

\begin{figure}[H]
  \centering

  \begin{minipage}[t]{0.54\textwidth}
    \centering
    \includegraphics[width=\linewidth]{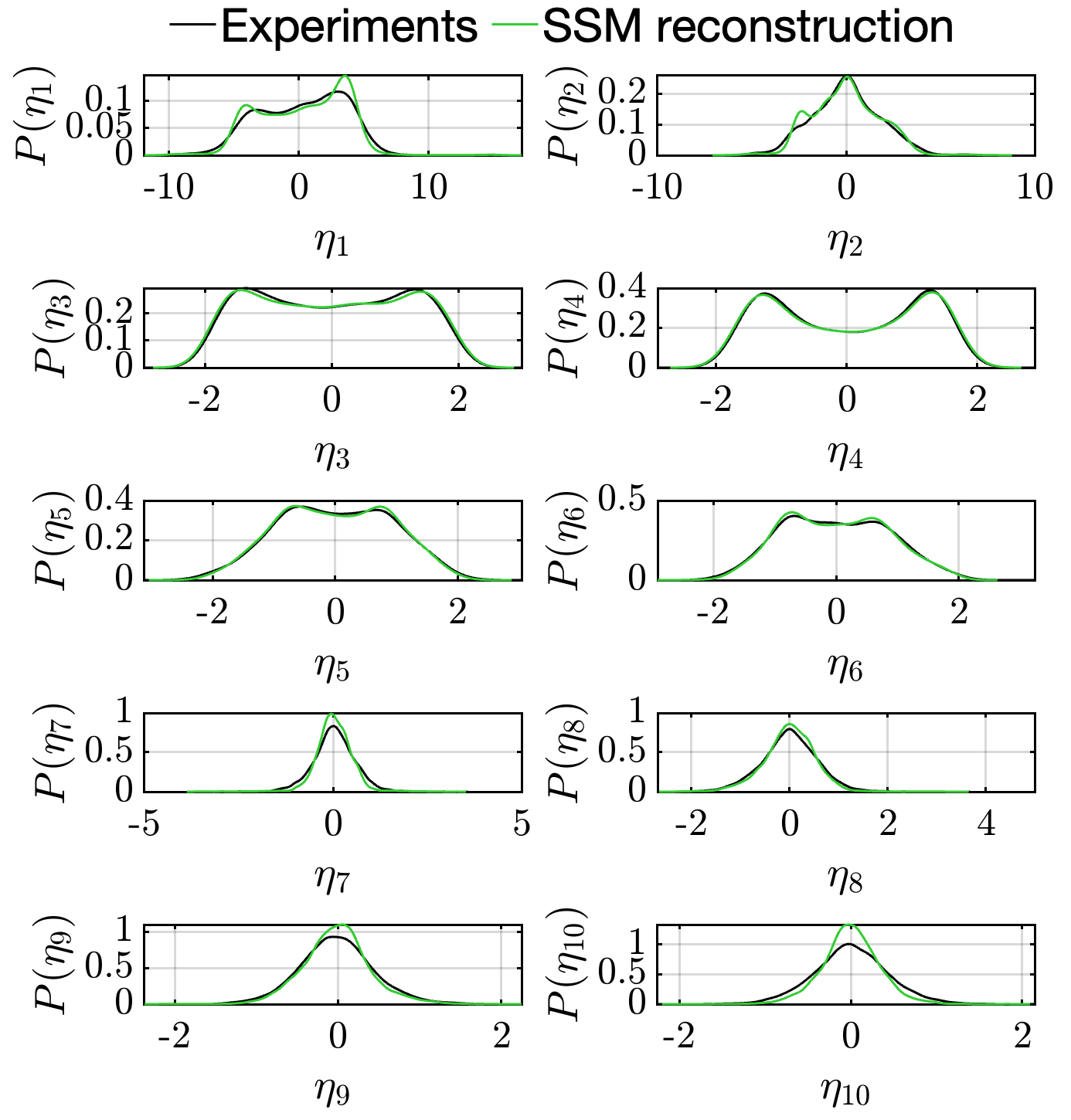}
  \end{minipage}\hfill
  \begin{minipage}[t]{0.45\textwidth}
    \centering
    \includegraphics[width=\linewidth]{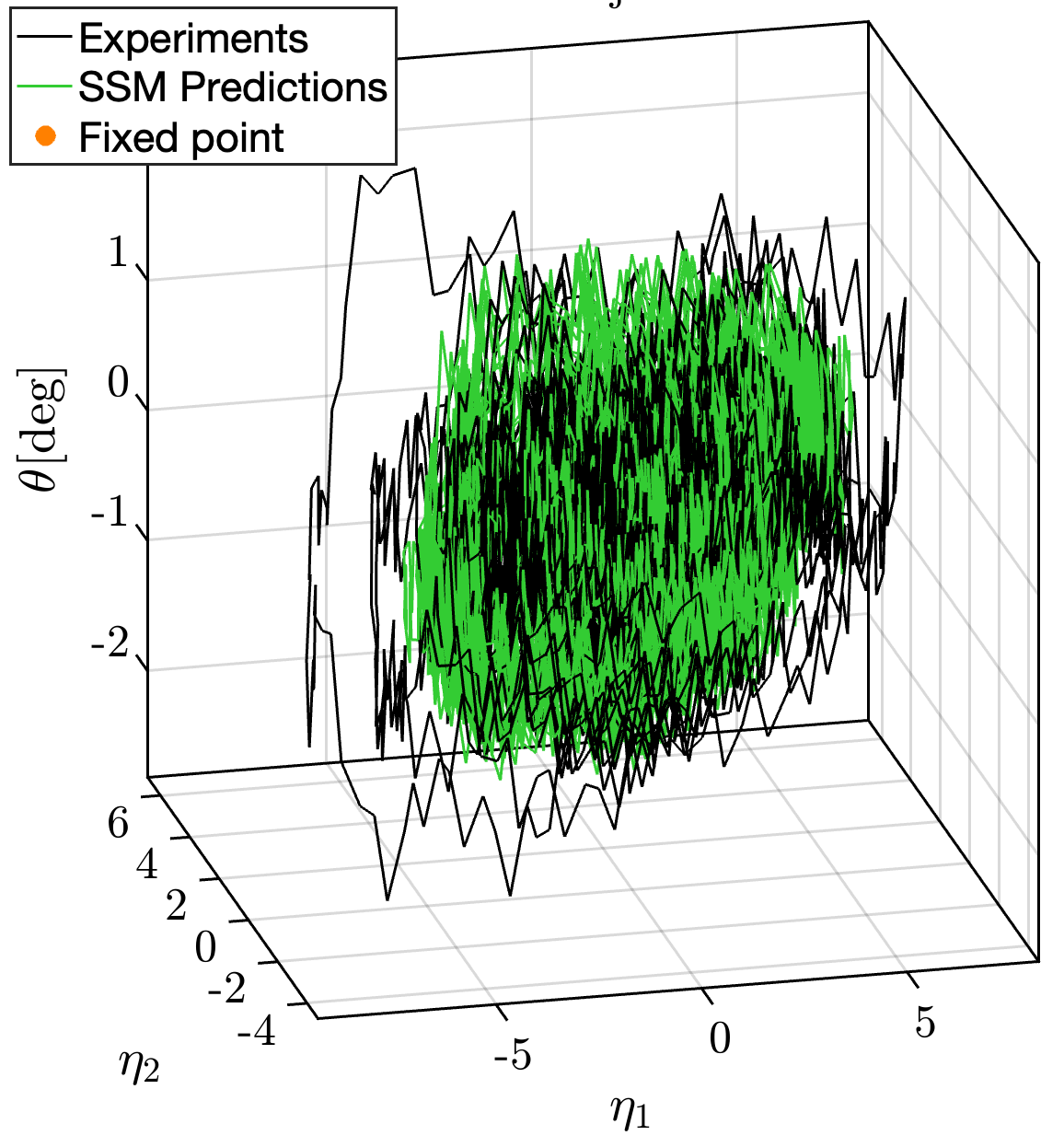}
  \end{minipage}

  \vspace{0.8em}

  \begin{minipage}[t]{0.92\textwidth}
    \centering
    \includegraphics[width=\linewidth]{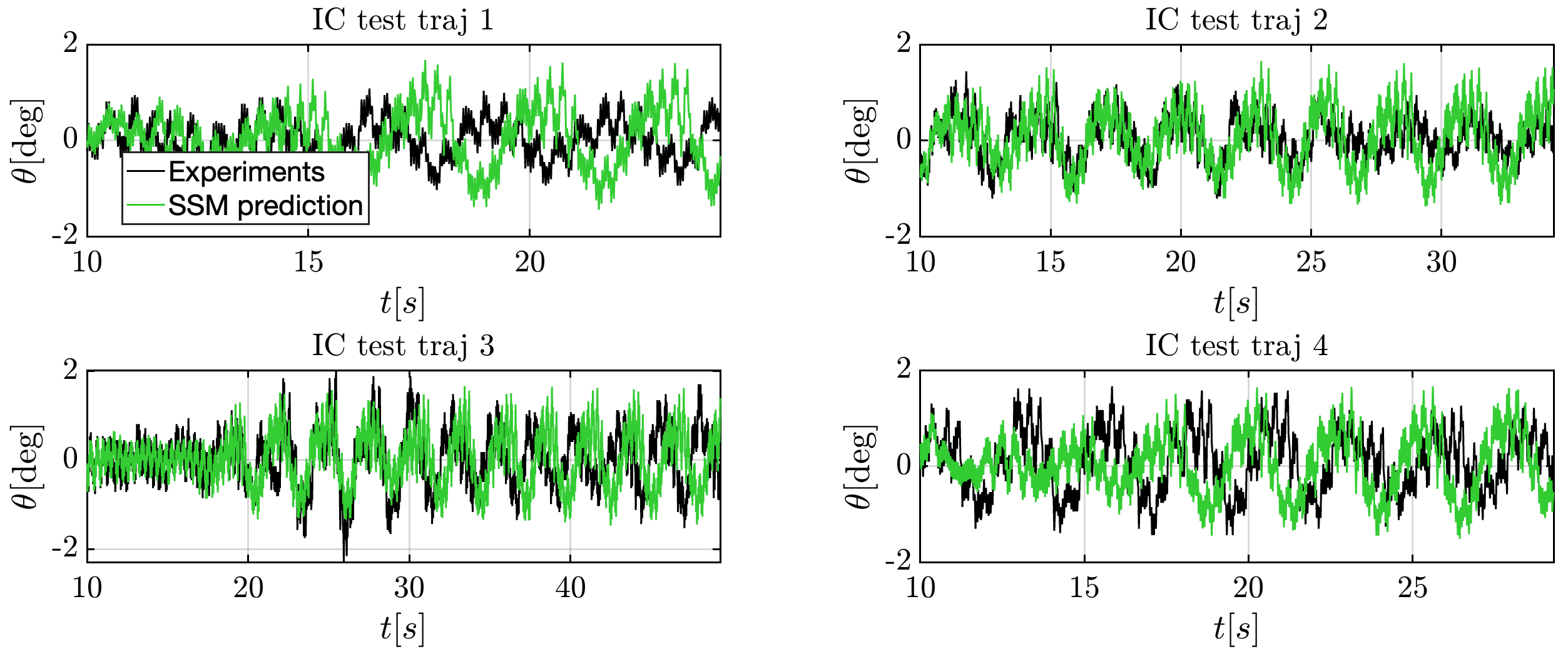}
  \end{minipage}

  \caption{SSM-based model predictions for the Furuta pendulum experiments on previously unseen trajectory data. Top left: probability density functions of the reduced coordinates of training trajectories from experimental data and their reconstruction from the RBF-based SSM model. Top right: Two full system's test trajectories (in black) evolving on the chaotic attractor together with their reduced model predictions (in green). Bottom: short-time predictions for two test trajectories, illustrating attraction toward the chaotic invariant set.}
  \label{fig:PDCombined}
\end{figure}

%% file: Sections/5_Conclusions.tex
\section{Conclusions }\label{section6}
We have shown that a data-driven approach based on a recent extension of spectral submanifold (SSM) theory to delay differential equations (DDEs) yields predictive reduced-order models for autonomous systems with bounded delays. These SSM-reduced models remain accurate even in the presence of chaotic dynamics and without prior knowledge of the number or magnitude of delays.

Furthermore, parametric SSM-reduction provides models that remain predictive across both local and global bifurcations, while revealing the mechanisms underlying loss of stability in the infinite-dimensional phase space of the underlying and possibly unknown DDE.

In the extension of the theory to nonautonomous DDEs with periodic delay, a $d$-dimensional family of SSMs exists, parametrized by time on a periodic interval, all of which are tangent to a time-periodic spectral subspace and with at least $C^1$ regularity. We have recalled the underlying theory in Theorem~\ref{thm:main1} and extended it to systems with periodic delays in Theorem~\ref{thm:main}, with a proof in Appendix C. This extension enables the rigorously justified reduction of experimental data from a PD-controlled mechanical system subject to feedback delay and the quantization effect of the digital controller.

We note that all results presented here concern bounded delays. Extensions to systems with unbounded delay, or memory effects, are an active area of research. Theoretical foundations can be found in \parencite{HinoMurakamiNaito1991}.

%% file: Sections/6_Aknowledgments.tex
\section{Aknowledgments }\label{section7}
This work was supported by the Swiss National Science Foundation (Grant No. $200021\_214908$). The authors are grateful to Professor Gábor Stépán for making the experimental data publicly available.

%% file: Sections/AppendixA.tex
\section{Data-driven spectral submanifolds for autonomous systems }\label{appendixA}
The theory of SSMs and their data-driven reconstruction via delay embedding is well established (see \parencite{Haller2025}). Here, we summarize only the essentials to the present analysis.

\subsection{SSM reconstruction via delay embedding}\label{delayEmb}
Full-state measurements are typically unavailable \parencite{Cenedese2022, Liu2024}, motivating reconstruction from limited observables. By Takens' theorem \parencite{Takens1981}, a $d$-dimensional manifold can be embedded using $m>2d$ delayed samples of a generic scalar observable \parencite{Axs2023}. 

The reduced dynamics
\[
\dot{\eta} = R(\eta), \qquad \eta \in \mathbb{R}^d,
\]
on a $d$-dimensional SSM $\mathcal{W}(E)$, tangent to a spectral subspace $E$, with associated flow $R^t : \eta_0 \mapsto \eta(t)$, are conjugate to the restriction of the full system flow $F^t : \mathbb{R}^n \to \mathbb{R}^n$ to $\mathcal{W}(E) \subset \mathbb{R}^n$. This conjugacy is realized through a smooth parametrization $M : \mathbb{R}^d \to \mathbb{R}^n$, satisfying
\begin{equation}
    F^t \circ M = M \circ R^t.
\end{equation}

Consider a scalar time series $s(t) = \xi(x(t))$, obtained by observing the full system, where $\xi : \mathbb{R}^n \to \mathbb{R}$. The delay-coordinate map $\Psi : \mathbb{R}^n \to \mathbb{R}^m$ is constructed by stacking $m$ uniformly spaced samples of $s(t)$ with sampling interval $T_s$:
\begin{equation}\label{delayEmbCoordMap}
    y(t) = \Psi(x(t))
    = [\,s(t),\, s(t+T_s),\, \dots,\, s(t+(m-1)T_s)\,]^\mathrm{T}
    \in \mathbb{R}^m.
\end{equation}

Denote by $F^t_\Psi : \mathbb{R}^m \to \mathbb{R}^m$ the induced flow in the delay embedding space. If $\xi(0)=0$, then the equilibrium $x=0$ maps to the fixed point $y=0$, i.e., $F^t_\Psi(0)=0$. For a generic observable $\xi$, and assuming nondegeneracy conditions on the SSM-reduced dynamics, Takens' theorem guarantees that for $m>2d$, the restriction of $\Psi$ to $\mathcal{W}(E)$ yields an embedding into $\mathbb{R}^m$ with probability one. As a result, the image $\widetilde{\mathcal{W}}(E) = \Psi(\mathcal{W}(E))$ is diffeomorphic to $\mathcal{W}(E)$.

This results enables the identification of an SSM anchored at $y=0$ in delay space via the conjugacy between the embedded dynamics $F^t_\Psi$ on $\widetilde{\mathcal{W}(E)}$ and the reduced dynamics on $\mathcal{W}(E)$:
\begin{equation}
F^t_\Psi \circ \Psi = \Psi \circ F^t\bigl\lvert_{\mathcal{W}(E)}.
\end{equation}

\subsection{Data‐driven modeling of spectral submanifoldw in a delay embedding space}\label{ManFit}
Following \parencite{Liu2024}, we denote by $y \in \mathbb{R}^m$ the delay-embedded observations, and by $\eta \in \mathbb{R}^d$ the reduced coordinates associated with the spectral subspace $E$ of $D\Psi(0)E$. In a neighborhood of the fixed point, the embedded SSM can be locally represented as a polynomial graph tangent to this linear subspace.

Let $\mathcal{K}$ be the maximal degree of the expansion. We approximate $\mathcal{W}(E)$ through the parametrization
\begin{equation}\label{eq:M_eta}
M(\eta)
= V_1\,\eta + \sum_{|\mathbf{k}|=2}^{\mathcal{K}} V_{\mathbf{k}}\,\eta^{\mathbf{k}},
\qquad 
\mathbf{k}=(k_1,\dots,k_d)\in\mathbb{N}^d,\quad
V_1,V_{\mathbf{k}}\in\mathbb{R}^{m\times d},
\end{equation}
where $\eta^{\mathbf{k}}=\eta_1^{k_1}\cdots\eta_d^{k_d}$ denotes the monomial associated with the multi-index $\mathbf{k}$.

The reduced coordinates are obtained via projection, $\eta = V_1^\mathrm{T} y$, where $V_1 \in \mathbb{R}^{m\times d}$ has orthonormal columns spanning the tangent space of $\widetilde{\mathcal{W}(E)} \subset \mathbb{R}^m$ at $y=0$. The coefficients are identified using SSMLearn \parencite{Cenedese2022} by solving
\begin{equation}\label{eq:Vopt}
V^* = [\,V_1^*,\,V_{2:\mathcal{K}}^*\,]= \argmin_{V_1,\,\{V_{\mathbf{k}}\}_{|\mathbf{k|}\ge2}}
   \sum_j
   \Big\lVert
     y_j 
     - V_1\,V_1^\mathrm{T} y_j 
     - \sum_{|\mathbf{k}|=2}^{\mathcal{K}} V_{\mathbf{k}}\,(\eta_j)^{\mathbf{k}}
   \Big\rVert_2^2  \qquad 
\end{equation}

subject to $V_1^\mathrm{T}V_1 = I$ and $V_1^\mathrm{T}V_{\mathbf{k}} = 0$ for all $|\mathbf{k}|\ge2$.

\subsection{Data‐driven modeling of the SSM-reduced dynamics}\label{RedDynFit}
In the present work, we approximated the reduced dynamics on the SSM using either polynomial regression or radial basis functions (RBFs) interpolation.

\subsubsection*{Polynomial reduced dynamics}
We represent the reduced dynamics in polynomial form as
\begin{equation}\label{eq:reduced-dynamics}
\dot{\eta} 
= R_{1}\,\eta 
+ \sum_{|\mathbf{k}|=2}^{\mathcal{K}} R_{\mathbf{k}}\,\eta^{\mathbf{k}},
\qquad 
\mathbf{k}=(k_1,\dots,k_d)\in\mathbb{N}^d,
\quad
R_{\mathbf{k}}\in\mathbb{R}^{d\times d},
\end{equation}
where the coefficient matrices are identified through a least-squares regression:
\begin{equation}\label{eq:regression}
R^{*}
=\argmin_{R}
\sum_{j}
\Bigl\|
\dot{\eta}_{j} 
-\sum_{|\mathbf{k}|=1}^{\mathcal{K}} R_{\mathbf{k}}\,(\eta_{j})^{\mathbf{k}}
\Bigr\|_2^2,
\qquad 
\mathbf{k}=(k_1,\dots,k_d)\in\mathbb{N}^d, \quad R = \bigl[\,R_{1},\,R_{2:\mathcal{K}}\,\bigr]
\;\in\;
\mathbb{R}^{d\times\sum_{k=1}^{\mathcal{K}}d_{k}}.
\end{equation}
Here, the number of monomials of total degree $k$ is $d_k = \binom{d+k-1}{k}$.

\subsubsection*{RBF reduced dynamics}
The SSM-reduced dynamics can be approximated using RBFs, following \parencite{Xu2024}, in the form of a discrete-time map:
\begin{equation}
\eta_{n+1}
= F(\eta_n)
= \sum_{i=1}^K C_i\,k\bigl(\|\eta_n - \eta_i\|\bigr).
\end{equation}
Here, $k$ is a radial kernel that depends on the distance between $\eta_n$ and the sampled points $\eta_i$. In this work, we adopt the linear kernel $k(r)=r$ due to its simplicity and effectiveness \parencite{Xu2024, KaszasHaller2025, AbbascianoEndreszStepanHaller2026JSV}.

\subsection{Parametric SSM-reduced order models}
About nonresonant points, the SSM and its associated reduced dynamics inherit smooth parameter dependence from the system vector field \parencite{Haller2025, Haller2016}. This justifies constructing parameter-dependent SSMs via polynomial regression or smooth interpolation. This is achieved by interpolating the coefficients of the multivariate polynomial expansions describing the reduced space, the manifold and its reduced dynamics (e.g.\ using splines or linear interpolation, as in this paper), resulting in coefficient functions of $\mu \in \mathbb{R}^p$, following \parencite{AbbascianoEndreszStepanHaller2026JSV}.

The question of persistence and regularity of SSM across resonant points is addressed in \parencite{KingEtAl2026ParametricSSM}. As mentioned therein, the limitations that resonance bring about are less pronounced in the data-driven setting. The corresponding reduced-order models can capture local bifurcations of equilibria and invariant tori, as well as global bifurcation phenomena in a neighborhood of the anchor point.

For illustration, in the case of a two-dimensional SSM ($d=2$), the parameter-dependent reduced dynamics in modal coordinates can be expressed as
\begin{equation}
\begin{aligned}
\dot\eta_1 &= \lambda_1(\mu)\eta_1 
+ a_{11}(\mu)\eta_1^2 
+ a_{12}(\mu)\eta_1\eta_2
+ a_{13}(\mu)\eta_2^2
+ \mathcal{O}\bigl(|\eta|^3\bigr),\\
\dot\eta_2 &= \lambda_2(\mu)\eta_2 
+ a_{21}(\mu)\eta_1^2 
+ a_{22}(\mu)\eta_1\eta_2
+ a_{23}(\mu)\eta_2^2
+ \mathcal{O}\bigl(|\eta|^3\bigr).
\end{aligned}
\end{equation}

%% file: Sections/AppendixB.tex
\section{Equation-driven reduction of the Hutchinson delay system }\label{appendixB}

\subsection{Formal procedure for SSM-reduction employed in \parencite{Szaksz2025}}\label{SectionB1}
We present the equation-driven SSM reduction of the Hutchinson delay system following \parencite{SzakszOroszStepan2024Traffic,Szaksz2025} based on previous work \parencite{StepanHaller1995, Diekmann1995}. The Hutchinson equation, taken from \parencite{BredaEtAl2025DDMethods}, is
\begin{equation}\label{eq:hutch}
x'(t)=r\,x(t)\left(1-\frac{x(t-\tau)}{K}\right),
\qquad r>0,\ K>0,\ \tau>0.
\end{equation}
Its equilibria are $x_*\in\{0,K\}$. Shifting the equilibrium $x_*=K$ to the origin, we obtain
\begin{equation}\label{eq:HutchDDE}
\dot y(t)=L\,y(t)+R\,y(t-\tau)+N(y(t),y(t-\tau)),
\end{equation}
with $L=0$, $R=-r$, and $N(u,v)=-\frac{r}{K}\,u\,v$. 
The state at time $t$ is the history segment
$y_t(\vartheta)=y(t+\vartheta)$, $\vartheta\in[-\tau,0]$,
with phase space $X:=C([-\tau,0],\mathbb{R})$.

For $r=1.8$, $K=10$, and $\tau=1$, the dominant eigenvalues of the linearized system \eqref{eq:HutchDDE} form an unstable complex-conjugate pair. The existence of a $C^\infty$ SSM follows from \parencite{BuzaHaller2025}, since the corresponding spectral subspace is spanned by eigenvectors associated with a simple unstable complex pair.
We construct a two-dimensional SSM associated with the dominant complex-conjugate eigenvalue pair and use one complex reduced coordinate $z\in\mathbb{C}$, corresponding to two real dimensions via $(\Re z,\Im z)$.

The characteristic matrix is
\begin{equation}\label{eq:Delta}
\Delta(\mu):=\mu I-L-Re^{-\mu \tau}.
\end{equation}
Computing the infinitely many roots of the quasi-polynomial~\eqref{eq:Delta} is nontrivial. To this end, we use the method developed in~\parencite{AppeltansSilmMichiels2022,AppeltansMichiels2023} to compute the dominant part of the spectrum, as shown in Figure~\ref{fig:Eigenvalues_H}. This yields
$\lambda_1 = 0.097 - 1.6i, \qquad \lambda_2 = \overline{\lambda_1} = 0.097 + 1.6i$, while all other eigenvalues satisfy $\Re(\lambda_j) < \Re(\lambda_1)$.
To construct a third-order polynomial expansion of the manifold, we must verify that the nonresonance conditions up to cubic order are satisfied, namely,
$\Delta(j\lambda_1+k\lambda_2)\neq 0$ for all integer pairs $(j,k)$ with $j+k\in\{2,3\}$. Equivalently, $\Delta(2\lambda_1)$, $\Delta(\lambda_1+\lambda_2)$, $\Delta(2\lambda_2)$, $\Delta(3\lambda_1)$, $\Delta(2\lambda_1+\lambda_2)$, $\Delta(\lambda_1 + 2\lambda_2)$, and $\Delta(3\lambda_2)$ must all be nonzero. This prevents division by zero when solving the homological equations.

\begin{comment}
The right eigenvector associated with $\lambda_1$, satisfying $\Delta(\lambda_1)q_1=0$ with nontrivial $q_1 \in \mathbb{R}$, can be chosen as any nonzero scalar. We set $q_1=1$ without loss of generality; hence $q_2=\bar{q}_1=1$. Similarly, the left eigenvector associated with $\lambda_1$, satisfying $p_1^*\Delta(\lambda_1)=0$ with nontrivial $p_1^* \in \mathbb{R}$, is any nonzero scalar, with $p_2^*=\bar{p}_1^*$.
\end{comment}

The right eigenfunctions are
\begin{equation}\label{eq:phi12}
\phi_1(\vartheta)=q_1e^{\lambda_1\vartheta},
\qquad
\phi_2(\vartheta)=q_2e^{\lambda_2\vartheta},
\qquad \vartheta\in[-\tau,0], \qquad \text{where } q_1=\bar q_2\in \mathbb{C}\setminus\{0\}.
\end{equation}
For convenience, we set $q_1=1$.
The left eigenfunctions are
\begin{equation}\label{eq:psi1}
\psi_1(\theta)=p_1^*\left(I+R e^{-\lambda_1\tau}\frac{e^{\lambda_1\theta}-1}{\lambda_1}\right), \qquad \psi_2=\bar{\psi}_1,
\qquad \theta\in[0,\tau], \qquad p_1^*\in \mathbb{C}\setminus\{0\}.
\end{equation}

\paragraph{Normalization.}
Imposing $\langle\psi_i,\phi_i\rangle=1$ for $i\in\{1,2\}$ fixes the remaining degree of freedom: $p_i^*=1/(\Delta'(\lambda_i)q_i)$, where the pairing is defined by
\begin{equation}\label{eq:pairing}
\langle \Psi,\Phi\rangle
=
\Psi(0)\,\Phi(0)
+
\int_0^\tau \Psi'(\theta)\,\Phi(-\theta)\,d\theta.
\end{equation}
Henceforth, $\psi_1$ denotes the normalized adjoint eigenfunction.

% ============================================================
\paragraph{SSM parametrization in history space.}
% ============================================================
We parametrize histories on the two-dimensional SSM by $z\in \mathbb{C}$:
\[
y_t(\cdot)=W(z(t),\bar z(t),\cdot).
\]
The reality of the manifold requires
$W( \bar z,{z},\vartheta)=\overline{W(z,\bar z,\vartheta)}$.
We expand up to cubic order,
$W = W_1+W_2+W_3+O(|z|^4)$,
with
\begin{equation}\label{eq:W1W2W3}
\begin{aligned}
W_1(z,\bar z,\vartheta)&=W_{10}(\vartheta)z+W_{01}(\vartheta)\bar z
=\phi_1(\vartheta)z+\phi_2(\vartheta)\bar z,
\\[1mm]
W_2(z,\bar z,\vartheta)&=\frac12\Big(W_{20}(\vartheta)z^2+2W_{11}(\vartheta)z\bar z+W_{02}(\vartheta)\bar z^2\Big),
\\[1mm]
W_3(z,\bar z,\vartheta)&=\frac{1}{6}\Big(W_{30}(\vartheta)z^3+3W_{21}(\vartheta)z^2\bar z+3W_{12}(\vartheta)z\bar z^2+W_{03}(\vartheta)\bar z^3\Big).
\end{aligned}
\end{equation}
With $\dot z=f(z,\bar z)$ and $\dot{\bar z}=\overline{f(z,\bar z)}$, the reduced dynamics truncated at cubic order are
\begin{equation}\label{eq:zdot-trunc30}
\begin{aligned}
\dot z
&=\lambda z
+\beta_{20}z^2+\beta_{11}z\bar z+\beta_{02}\bar z^2
+\beta_{30}z^3+\beta_{21}z^2\bar z+\beta_{12}z\bar z^2+\beta_{03}\bar z^3
+O(|z|^4),
\\
\dot{\bar z} 
&=\bar{\lambda}\bar z
+\overline{\beta_{02}} z^2+\overline{\beta_{11}}z\bar z+\overline{\beta_{20}}\bar{z}^2
+\overline{\beta_{03}} z^3+\overline{\beta_{12}}\bar  z z^2+\overline{\beta_{21}}\bar  z^2 z+\overline{\beta_{30}}\bar z^3
+O(|z|^4).
\end{aligned}
\end{equation}
The invariance of the spectral submanifold can be expressed by the PDE
\begin{equation}
    \frac{\partial}{\partial t}y_t=\frac{\partial}{\partial \vartheta }y_t,
    \end{equation}
which yields equations in the interior, $\vartheta \in [-\tau,0)$, and at the boundary, $\vartheta=0$.

\paragraph{Interior invariance PDE.}
$\partial_\vartheta W = \partial_zW\,\dot z+\partial_{\bar z}W\,\dot{\bar z} \quad , \quad \vartheta\in[-\tau,0).$

The left-hand side, up to cubic order, can be computed from \eqref{eq:W1W2W3} as
\begin{equation}\label{eq:dthW}
\begin{aligned}
\partial_\vartheta W
&=\partial_\vartheta W_1+\partial_\vartheta W_2+\partial_\vartheta W_3+O(|z|^4)
\\
&=\Big(W_{10}'z+W_{01}'\bar z\Big)
+\frac12\Big(W_{20}'z^2+2W_{11}'z\bar z+W_{02}'\bar z^2\Big)+\frac{1}{6}\Big(W_{30}'z^3+3W_{21}'z^2\bar z+3W_{12}'z\bar z^2+W_{03}'\bar z^3\Big)
+O(|z|^4).
\end{aligned}
\end{equation}
For the right-hand side, we obtain
\begin{equation}\label{eq:interior-full-expanded}
\begin{aligned}
\partial_z W\,\dot z
&= \lambda W_{10} z +\Big(
    (W_{10}\beta_{20}+\lambda W_{20}) z^2
  + (W_{10}\beta_{11}+\lambda W_{11}) z\bar z
  + W_{10}\beta_{02}\,\bar z^2
  \Big)+ \Big(
    (W_{10}\beta_{30}+W_{20}\beta_{20}+\tfrac{\lambda}{2} W_{30}) z^3 + \\
&
  + (W_{10}\beta_{21}+W_{20}\beta_{11}+W_{11}\beta_{20}+\lambda W_{21}) z^2\bar z + (W_{10}\beta_{12}+W_{20}\beta_{02}+W_{11}\beta_{11}+\tfrac{\lambda}{2} W_{12}) z\bar z^2 +  \\
  &
  + (W_{10}\beta_{03}+W_{11}\beta_{02}) \bar z^3
  \Big)
+ O(|z|^4),
\\[2mm]
\partial_{\bar z} W\,\dot{\bar z}
&= \bar\lambda W_{01}\,\bar z + \Big(
    W_{01}\bar\beta_{02}\, z^2
  + (W_{01}\bar\beta_{11}+\bar\lambda W_{11}) z\bar z
  + (W_{01}\bar\beta_{20}+\bar\lambda W_{02}) \bar z^2
  \Big) + \Big(
    (W_{01}\bar\beta_{03}+W_{11}\bar\beta_{02}) z^3 + \\
&
  + (W_{01}\bar\beta_{12}+W_{11}\bar\beta_{11}+W_{02}\bar\beta_{02}
     +\tfrac{\bar\lambda}{2} W_{21}) z^2\bar z + (W_{01}\bar\beta_{21}+W_{11}\bar\beta_{20}+W_{02}\bar\beta_{11}
     +\bar\lambda W_{12}) z\bar z^2 +\\
&
  + (W_{01}\bar\beta_{30}+W_{02}\bar\beta_{20}
     +\tfrac{\bar\lambda}{2} W_{03}) \bar z^3
  \Big)
+ O(|z|^4).
\end{aligned}
\end{equation}

\paragraph{Boundary invariance PDE.}
$ \partial_zW(0)\dot z+\partial_{\bar z}W(0)\dot\bar z = L\,W(0)+R\,W(-\tau)+N\big(W(0),W(-\tau)\big) $ at $\vartheta=0$.

The expansion of the nonlinearity up to cubic order is obtained by substituting \eqref{eq:W1W2W3}. 

\begin{comment} Let $N:\mathbb{R}^2\to\mathbb{R}$ be $C^3$, with
$N(u,v)=-\frac{r}{K}\,u\,v$, $N(0,0)=0$, and $DN(0,0)=0$,
and
\end{comment} 

Define $\phi_{1,\tau}=e^{-\lambda\tau}$ and $\phi_{2,\tau}=e^{-\bar\lambda\tau}$. Truncating $W$ at second order is sufficient for a third-order expansion of $N$:
\begin{equation}\label{eq:Nsub-direct-collected-to3}
\begin{aligned}
N\big(& W(0),W(-\tau)\big)
= -\frac{r}{K}
\Big(
    \phi_{1,\tau} z^2
  + (\phi_{1,\tau}+\phi_{2,\tau}) z\bar z
  + \phi_{2,\tau} \bar z^2
\Big)
-\frac{r}{2K}
\Big(
    \big(W_{20}(-\tau)+\phi_{1,\tau}W_{20}(0)\big) z^3 +
\\
&
  + \big(W_{20}(-\tau)+2W_{11}(-\tau)
      +2\phi_{1,\tau}W_{11}(0)
      +\phi_{2,\tau}W_{20}(0)\big) z^2\bar z
      \\
&
+ \big(W_{02}(-\tau)+2W_{11}(-\tau)
      +\phi_{1,\tau}W_{02}(0)
      +2\phi_{2,\tau}W_{11}(0)\big) z\bar z^2
 + \big(W_{02}(-\tau)+\phi_{2,\tau}W_{02}(0)\big) \bar z^3
\Big)
+ O(|z|^4).
\end{aligned}
\end{equation}
We compute approximate solutions of the invariance PDE by solving the associated homological equations obtained from the interior and boundary invariance conditions order by order. This procedure yields the coefficients of both the manifold parametrization and the reduced dynamics up to the desired order, here cubic.

\paragraph{Order 1.}
Solving the interior invariance equation at first order, using \eqref{eq:dthW} and \eqref{eq:interior-full-expanded}, yields
\begin{equation} 
W_{10}(\vartheta)=q_{1}e^{\lambda\vartheta},\qquad
W_{01}(\vartheta)=q_{2}e^{\bar{\lambda}\vartheta}.
\end{equation}
Substitution into the boundary invariance equations, derived from \eqref{eq:interior-full-expanded}--\eqref{eq:Nsub-direct-collected-to3}, leads to
\[
[z]:\quad \lambda = L + R e^{-\lambda\tau}\quad\Longleftrightarrow\quad \Delta(\lambda)=0,
\qquad
[\bar z]:\quad \bar{\lambda}=L+Re^{-\bar{\lambda}\tau}\quad\Longleftrightarrow\quad \Delta(\bar{\lambda})=0,
\]
which ensures tangency to the spectral subspace spanned by the right eigenfunctions in $X$.

\paragraph{Order 2.}
From \eqref{eq:dthW} and \eqref{eq:interior-full-expanded}, we derive the second-order interior invariance equations, solve them by variation of constants, evaluate them at $\vartheta \in\{-\tau,0\}$, and substitute the resulting expressions into the boundary equations. Together with the following projection conditions, which we arbitrarily impose to fix the coefficients $\beta_{ij}$ by removing the second-order component of the manifold along the spectral subspace directions:
\begin{equation}\label{eq:bi-gauge}
\langle\psi_1,W_{jk}\rangle=0,\qquad \langle\psi_2,W_{jk}\rangle=0,
\qquad j+k=2,
\end{equation}
this yields, for each monomial, a $3\times 3$ linear system with unknowns $(W_{jk}(0), \beta_{jk}, \overline{\beta}_{jk})$. The projections \eqref{eq:bi-gauge} are conditions chosen arbitrarily to render the system determined. This choice forces the component tangent to the spectral subspace entirely into $\beta_{jk}$ rather than $W_{jk}$.

\paragraph{Notation for the projections.}
For any scalar exponential, define
\[
\Pi_i(a):=\langle\psi_i,e^{a\vartheta}\rangle=\frac{1}{\Delta'(\lambda_i)}
\left[
1+R\,e^{-\lambda_i\tau}\,
\frac{e^{(\lambda_i-a)\tau}-1}{\lambda_i-a}
\right],
\qquad a\neq \lambda_i,\qquad i=1,2.
\]
In the limit \(a\to\lambda_i\),
\[
\Pi_i(\lambda_i)
=\frac{1}{\Delta'(\lambda_i)}
\left[
1+R\,\tau\,e^{-\lambda_i\tau}
\right]=1.
\]
As an example, we compute $(W_{20}(0), \beta_{20}, \overline{\beta}_{02})$. The boundary equation is
\begin{equation}
    \underbrace{\Big(\lambda-\tfrac{L}{2}-\tfrac{R}{2}\phi_{1,\tau}^{2}\Big)}_{=:a_{20}=\frac{1}{2}\Delta(2\lambda)\rightarrow\text{nonres. cond.}}\,W_{20}(0)
+\underbrace{\Big(1-\tfrac{R}{\lambda}(\phi_{1,\tau}^{2}-\phi_{1,\tau})\Big)}_{=:b_{20}}\,\beta_{20}
+\underbrace{\Big(1-\tfrac{R}{\overline{\lambda}-2\lambda}(\phi_{2,\tau}-\phi_{1,\tau}^{2})\Big)}_{=:c_{20}}\,\overline{\beta_{02}}
=\underbrace{-\tfrac{r}{K}\phi_{1,\tau}}_{=:d_{20}}.
\end{equation}
The projection conditions read
\begin{equation}
0=\langle\psi_i,W_{20}(\vartheta)\rangle
=\Pi_i(2\lambda)\,W_{20}(0)
+\frac{2\beta_{20}}{\lambda}\big(\Pi_i(2\lambda)-\Pi_i(\lambda)\big)
+\frac{2\overline{\beta_{02}}}{\overline{\lambda}-2\lambda}\big(\Pi_i(\overline{\lambda})-\Pi_i(2\lambda)\big),
\quad i\in\{1,2\}.
\end{equation}
This leads to the following linear system for $(W_{20}(0), \beta_{20}, \overline{\beta}_{02})$:
\begin{equation}\label{eq:sys-20-3x3}
\begin{bmatrix}
\frac{1}{2}\Delta(2\lambda) &
b_{20} &
c_{20}
\\[1mm]
\Pi_1(2\lambda) &
\frac{2}{\lambda}\big(\Pi_1(2\lambda)-\Pi_1(\lambda)\big) &
\frac{2}{\overline{\lambda}-2\lambda}\big(\Pi_1(\overline{\lambda})-\Pi_1(2\lambda)\big)
\\[2mm]
\Pi_2(2\lambda) &
\frac{2}{\lambda}\big(\Pi_2(2\lambda)-\Pi_2(\lambda)\big) &
\frac{2}{\overline{\lambda}-2\lambda}\big(\Pi_2(\overline{\lambda})-\Pi_2(2\lambda)\big)
\end{bmatrix}
\begin{bmatrix}
W_{20}(0)\\ \beta_{20}\\ \overline{\beta_{02}}
\end{bmatrix}
=
\begin{bmatrix}
d_{20}\\ 0\\ 0
\end{bmatrix}.
\end{equation}
The interior equation then reconstructs $W_{20}(\vartheta)$. The same procedure applies to $(W_{11}(0),\beta_{11},\overline{\beta}_{11})$ and $(W_{02}(0),\beta_{02},\overline{\beta}_{20})$.

\paragraph{Order 3.}
Proceeding as at second order, the cubic interior equations, derived from \eqref{eq:dthW} and \eqref{eq:interior-full-expanded}, are solved by variation of constants, evaluated at $\vartheta\in\{-\tau,0\}$, and substituted into the boundary equations. Together with \eqref{eq:bi-gauge} applied for $j+k=3$, each monomial yields a $3\times 3$ linear system for $(W_{jk}(0), \beta_{jk}, \overline{\beta}_{jk})$, for all $j+k=3$.

Again, as an example, we present the computation for $(W_{30}(0),\beta_{30},\overline{\beta}_{03})$. The boundary equation is
\begin{equation}
\Big(\tfrac{\lambda}{2}-\tfrac{L}{6}\Big)W_{30}(0)+\beta_{30}+\overline{\beta_{03}}
-\tfrac{R}{6}W_{30}(-\tau)
=
D_{30},
\end{equation}
with projection constraints
\begin{equation}\label{eq:gauge-W30-general}
0=\langle\psi_i,W_{30}\rangle =
\Pi_i(3\lambda)W_{30}(0)+\mathcal J^{(i)}_{30} 
+6\beta_{30}\frac{\Pi_i(3\lambda)-\Pi_i(\lambda)}{2\lambda}
+6\overline{\beta_{03}}
\frac{\Pi_i(\overline{\lambda})-\Pi_i(3\lambda)}
{\overline{\lambda}-3\lambda}.
\end{equation}

Hence, the linear system for $(W_{30}(0),\beta_{30},\overline{\beta}_{03})$ reads
\begin{equation}\label{eq:sys-30-3x3-general}
\begin{bmatrix}
A_{30} & B_{30} & C_{30}\\[1mm]
\Pi_1(3\lambda) &
6\frac{\Pi_1(3\lambda)-\Pi_1(\lambda)}{2\lambda} &
6\frac{\Pi_1(\overline{\lambda})-\Pi_1(3\lambda)}{\overline{\lambda}-3\lambda}
\\[2mm]
\Pi_2(3\lambda) &
6\frac{\Pi_2(3\lambda)-\Pi_2(\lambda)}{2\lambda} &
6\frac{\Pi_2(\overline{\lambda})-\Pi_2(3\lambda)}{\overline{\lambda}-3\lambda}
\end{bmatrix}
\begin{bmatrix}
W_{30}(0)\\ \beta_{30}\\ \overline{\beta_{03}}
\end{bmatrix}
=
\begin{bmatrix}
E_{30}\\[1mm]
-\mathcal J^{(1)}_{30}\\[1mm]
-\mathcal J^{(2)}_{30}
\end{bmatrix}.
\end{equation}
where
\[
\begin{aligned}
A_{30}&:=\Big(\tfrac{\lambda}{2}-\tfrac{L}{6}\Big)-\tfrac{R}{6}\phi_{1,\tau}^3=\cfrac{1}{6}\Delta (3\lambda), \qquad 
B_{30}:=1-R\,\frac{\phi_{1,\tau}^3-\phi_{1,\tau}}{2\lambda},\qquad 
C_{30}:
=1-R\,\frac{\phi_{2,\tau}-\phi_{1,\tau}^3}{\overline{\lambda}-3\lambda}, \qquad 
\\
D_{30}&:=
-\frac{r}{2K}\Big(W_{20}(-\tau)+\phi_{1,\tau}W_{20}(0)\Big)
-\Big(W_{20}(0)\beta_{20}+W_{11}(0)\overline{\beta_{02}}\Big),\qquad E_{30}:=D_{30}+\tfrac{R}{6}M_{30}.
\end{aligned}
\]
Here, $D_{30}$ depends only on second-order quantities, and the known forcing terms are
\begin{equation}
\begin{aligned}
F_{30}(\vartheta) :=6\Big(W_{20}(\vartheta)\beta_{20}+W_{11}(\vartheta)\overline{\beta_{02}}\Big), \quad M_{30}&:=\int_{0}^{-\tau}e^{3\lambda(-\tau-s)}F_{30}(s)\,ds, \quad  \mathcal J^{(i)}_{30}&:=\left\langle\psi_i,\ \int_{0}^{\vartheta}e^{3\lambda(\vartheta-s)}F_{30}(s)\,ds\right\rangle.
\end{aligned}
\end{equation}

The interior equation then reconstructs $W_{30}(\vartheta)$.
After solving all homological equations, we verify the conditions implied by the reality of the manifold, $W_{jk}(\vartheta)=\overline{W_{kj}(\vartheta)}$. Higher-order expansions can be derived analogously, but the number of coefficients grows rapidly, especially with increasing SSM dimension. This motivates data-driven formulations even when the governing equations are available.

\subsection{Alternative procedure for SSM-reduction developed in \parencite{BuzaHaller2025}}\label{B2}
The continuous embedding $\iota :X\hookrightarrow X^{\odot \star}$ defined by $\varphi\mapsto(\varphi(0),\varphi)$, 
where the sun-star subspace $X^{\odot \star}=\mathbb{R}\times L^\infty([-\tau,0];\mathbb{R})$ is introduced in \parencite{Diekmann1995}, allows us to transition from the update rule \eqref{eq:hutch1} into the evolution equation \eqref{eq:eqXodotstar}
\begin{subequations}\label{eq:eqXodotstar}
\begin{align}
\iota \cfrac{d}{dt}y_t &= A^{\odot \star} \iota y_t + R(y_t), \label{eq:eqX_a}\\
y_0 &= \varphi, \qquad \varphi \in O. \label{eq:eqX_b}
\end{align}
\end{subequations}
where the linear part is, explicitly for system \eqref{eq:eqXodotstar},
\begin{subequations}
\begin{align}
\operatorname{dom}\!\left(A^{\odot *}\right)
&=
\left\{ (c,\varphi)\ \middle|\ \varphi \text{ is Lipschitz with } \varphi(0)=c \right\}, \label{eq:a} \\
A^{\odot *}(c,\varphi)
&=
\left(- r \varphi(-\tau),\dot{\varphi}\right). \label{eq:b}
\end{align}
\end{subequations}

while the nonlinear remainder is $R : X \to X^{\odot \star}$:
\begin{equation}
    R(\varphi) = \Big( -\frac{r}{K}\varphi(0)\varphi(-\tau) , 0\Big)
\end{equation}
so that $R\in C^\infty$, $R(0)=0$ and the first derivative vanishes, that is, $DR(0) = 0$.

We define the spectral set $\Sigma = \{\lambda, \bar{\lambda}\}$. For each eigenvalue $\mu$, the corresponding eigenfunction in $X$ has the form $w_\mu(\vartheta) = q_\mu e^{\mu \vartheta}$. Since the DDE is scalar, we fix $q_\mu=1$ and set
\[
w_1(\vartheta) = e^{\lambda \vartheta},\qquad
w_2(\vartheta) = e^{\bar{\lambda} \vartheta}.
\]

We precompute derivatives of the nonlinear part $R$, as they will be useful below. The second derivative is
\begin{equation}
\big(D^2R(0)[u,v]\big)
=
\Big(- \frac{r}{K}
\left(
u(0)v(-\tau) + v(0)u(-\tau)
\right),0\Big) \qquad u,v\in X.
\end{equation}
System \eqref{eq:eqXodotstar} has no cubic term, so $D^3R(0) = 0$. In particular,
\[
\big(D^2R(0)[w_1,w_1]\big)
=
\Big(-\frac{r}{K}
\left(
1\cdot e^{-\lambda\tau}
+
1\cdot e^{-\lambda\tau}
\right),0\Big) = \Big(-\frac{2r}{K} e^{-\lambda\tau},0\Big), 
\]
\[\big(D^2R(0)[w_1,w_2]\big)
=\Big(
-\frac{r}{K}
\left(
e^{-\lambda\tau}
+
e^{-\bar\lambda\tau}
\right),0\Big), \qquad
\big(D^2R(0)[w_2,w_2]\big)
=
\Big(-\frac{2r}{K} e^{-\bar\lambda\tau},0\Big).
\]

We now seek a two-dimensional SSM tangent to the spectral subspace
$X_\Sigma$. As in section \ref{SectionB1}, we wish to parametrize our reduced dynamics via a single complex coordinate $z\in\mathbb{C}$. Lemma 24 in \parencite{BuzaHaller2025}, however, is posed with reduced space taken as the spectral subspace $X_\Sigma$. To bridge this gap, we introduce $\Theta:\mathbb{C}\rightarrow X_\Sigma $ given by
\[
\Theta(z)=(\vartheta\mapsto zw_1(\vartheta) +\bar z w_2(\vartheta) ).
\]

The reduced dynamics may then be posed on $\mathbb{C}$, which will be of the general form 
\begin{equation}\label{eq:reddyn}
\dot z(z,\bar z)
=
g_1 z
+\frac12\big(g_{20}z^2+2g_{11}z\bar z+g_{02}\bar z^2\big)
+\frac16\Big(g_{30}z^3+3g_{21}z^2\bar z+3g_{12}z\bar z^2+g_{03}\bar z^3\Big)
+\mathcal O(4). 
\end{equation}

\subsection*{Computation of $W$}
The formulas provided in Lemma 24, \parencite{BuzaHaller2025}, extract $W$ as a map $X_\Sigma\rightarrow X$. To relate this to our reduced coordinate $z$, we need to precompose with $\Theta$, i.e., our final manifold embedding will be $W\circ\Theta:\mathbb{C}\rightarrow X$. 

\subsection*{Linear term $DW(0)$}
The linear part is given as 
\[
DW(0)=\iota_\Sigma :X_\Sigma \hookrightarrow X
\]
enforcing the graph style of parametrization.

\subsection*{Quadratic term $D^2W(0)$}

As in Lemma 24 of \parencite{BuzaHaller2025},
\begin{equation}\label{eq:D2K0-lemma}
D^2W(0)
=
\iota^{-1}
\int_0^\infty
T^{\odot\star}(s)\,P_{\Sigma'}^{\odot\star}\,
D^2R(0)\big[T(-s)\iota_\Sigma,\;T(-s)\iota_\Sigma\big]
\,ds.
\end{equation}

On the spectral subspace $X_\Sigma$, the linear evolution is given by
$T(-s)\big|_{X_\Sigma} = e^{-sA_\Sigma}$, so that
\begin{equation}\label{eq:Tminus-on-eigs}
T(-s)w_1 = e^{-\lambda s}w_1,\qquad
T(-s)w_2 = e^{-\bar\lambda s}w_2.
\end{equation}

Therefore, using \eqref{eq:Tminus-on-eigs} and bilinearity,
\begin{align}
D^2R(0)\big[T(-s)w_i,\;T(-s)w_j\big]
&= e^{-(\lambda_i+\lambda_j)s}\, D^2R(0)[w_i,w_j], \qquad (i,j)\in\{(1,1),(1,2),(2,2)\},
\end{align}
The three second-order directions in $X_\Sigma$ are $(w_1,w_1)$, $(w_1,w_2)$, and $(w_2,w_2)$. Applying \eqref{eq:D2K0-lemma} to each of these pairs gives
\begin{align}\label{eq:inclusionToBeeliminated}
D^2W(0)[w_i,w_j]
&=
\iota^{-1}\int_0^\infty
T^{\odot\star}(s)\,P_{\Sigma'}^{\odot\star}\,
e^{-(\lambda_i+\lambda_j)s}\, D^2R(0)[w_i,w_j]\;ds,
\qquad (i,j)\in\{(1,1),(1,2),(2,2)\},
\end{align}

Under a suitable spectral gap assumption, so that the integrals converge and the resolvents exist, we use the resolvent identity
\begin{equation}\label{eq:resolvent-trick-expanded}
\iota^{-1}\int_0^\infty e^{-\nu s}\,T^{\odot\star}(s)\,P_{\Sigma'}^{\odot\star}\,ds
=
P_{\Sigma'}\,\iota^{-1}\,(\nu I - A^{\odot\star})^{-1},
\end{equation}
where $\nu\in\mathbb C$ satisfies $\mathrm{Re}(\nu)>\sup\,\mathrm{Re}(\Sigma')$.
Applying \eqref{eq:resolvent-trick-expanded} to \eqref{eq:inclusionToBeeliminated} yields
\begin{align}\label{eq:Eqproj}
D^2W(0)[w_i,w_j]
&=
P_{\Sigma'}\,\iota^{-1}\,
\big((\lambda_i+\lambda_j) I - A^{\odot\star}\big)^{-1}
\,D^2R(0)[w_i,w_j],
\qquad (i,j)\in\{(1,1),(1,2),(2,2)\},
\end{align}

$D^2R(0)[w_i,w_j]$, viewed in $X^{\odot\star}=\mathbb R\times L^\infty([-\tau,0],\mathbb R)$, has the form $(c_{ij},0)$. The resolvent then becomes
\begin{equation}\label{eq:resolvent-on-c0}
(\nu I-A^{\odot\star})^{-1}(c,0)
=
\big(\Delta(\nu)^{-1}c,\ \vartheta\mapsto e^{\nu\vartheta}\Delta(\nu)^{-1}c\big),
\end{equation}
so that applying $\iota^{-1}$ simply extracts the history component:
\begin{equation}\label{eq:iota-inv-resolvent}
\iota^{-1}(\nu I-A^{\odot\star})^{-1}(c,0)
=
\vartheta\mapsto e^{\nu\vartheta}\Delta(\nu)^{-1}c.
\end{equation}

Therefore, defining
\[
U_\nu(\vartheta;c):= e^{\nu\vartheta}\Delta(\nu)^{-1}c,
\]
and using $P_{\Sigma'}=I-P_\Sigma$, equation \eqref{eq:Eqproj} becomes
\begin{align}
D^2W(0)[w_i,w_j](\vartheta)
&=
(I-P_\Sigma)\,
U_{\lambda_i+\lambda_j}\big(\vartheta;c_{ij}\big),
\qquad (i,j)\in\{(1,1),(1,2),(2,2)\},
\end{align}
where $P_\Sigma$ is the spectral projection onto
the 2D spectral subspace $X_\Sigma $.

We now need an explicit formula for $P_\Sigma$, that is, for $P_{\{\lambda\}}$ and $P_{\{\bar\lambda\}}$. 
We recall from \parencite{diekmann2012delay}, exercise IV.3.12,
\begin{equation}\label{eq:Proj}
{P}_{\{\lambda\}} u(\vartheta)
=
\frac{
p^{*}
\left[
u(0)
+
\int_{0}^{\tau} d\zeta(s)
\int_{0}^{s} e^{-\lambda \eta} u(\eta - s)\, d\eta
\right]
q
}{
p^{*}\Delta'(\lambda) q
}
e^{\lambda \vartheta}, \qquad u\in X.
\end{equation}
Here,
\begin{equation}
\int_{0}^{\tau} d\zeta(s)\, u(-s)
=
L u(0) + R u(-\tau)=-ru(-\tau).
\end{equation}
Let $q$ and $p^*$ denote, respectively, the right and left eigenvectors associated with $\lambda$, satisfying
\[
\Delta(\lambda)q = 0
\quad \text{and} \quad
p^*\Delta(\lambda) = 0.
\]

Substituting this into \eqref{eq:Proj}, we obtain
\begin{equation}\label{eq:projector_final_LR}
P_{\{\lambda\}}u(\vartheta)
=
\frac{
p^{*}\!\left[
u(0)-r\int_{0}^{\tau} e^{-\lambda \eta}\,u(\eta-\tau)\,d\eta
\right]\!q
}{
p^{*}\Delta'(\lambda)q
}\,e^{\lambda\vartheta}.
\end{equation}

Define the scalar functionals
\begin{equation}\label{eq:alpha-lambda-def}
\alpha_\lambda(u)
:=
\frac{
p^*\left[
u(0)\;-\;r\displaystyle\int_0^\tau e^{-\lambda s}\,u(s-\tau)\,ds
\right]q
}{
p^*\Delta'(\lambda)q
},
\qquad
\alpha_{\bar\lambda}(u)
:=
\frac{
\overline p^*\left[
u(0)\;-\;r\displaystyle\int_0^\tau e^{-\bar\lambda s}\,u(s-\tau)\,ds
\right]\overline q
}{
\overline p^*\Delta'(\bar\lambda)\overline q
}.
\end{equation}

Therefore, for $u(\vartheta)=C e^{\nu\vartheta}$,
\begin{equation}\label{eq:P-Sigma-exp}
(P_\Sigma u)(\vartheta)
=
\alpha_\lambda(Ce^{\nu\cdot})\,e^{\lambda\vartheta}
+
\alpha_{\bar\lambda}(Ce^{\nu\cdot})\,e^{\bar\lambda\vartheta}.
\end{equation}

\paragraph*{$(w_i,w_j)$ component.}
Thus,

\begin{align}
D^2W&(0)[w_i,w_j](\vartheta)
=
U_{\lambda_i+\lambda_j}(\vartheta;c_{ij})
\\
&\quad-
\Big(
\alpha_\lambda U_{\lambda_i+\lambda_j}(\vartheta;c_{ij}) e^{\lambda\vartheta}
+
\alpha_{\bar\lambda}U_{\lambda_i+\lambda_j}(\vartheta;c_{ij})e^{\bar\lambda\vartheta}
\Big),
\qquad (i,j)\in\{(1,1),(1,2),(2,2)\},
\end{align}

\subsection*{Linear term $DH(0)$}

From Lemma 24 in \parencite{BuzaHaller2025}, we have $DH(0)=A_\Sigma$. This translates to $g_1=\lambda$ in \eqref{eq:reddyn}.

\subsection*{Quadratic term $D^2H(0)$}

From Lemma 24 in \parencite{BuzaHaller2025},
\[
D^2H(0)=\widetilde P_\Sigma^{\odot\star}\, D^2R(0)[\iota_\Sigma,\iota_\Sigma].
\]
That is,
\[
D^2H(0)[w_i,w_j](\vartheta)=\widetilde P^{\odot\star}_\Sigma\big(D^2R(0)[w_i,w_j]\big)(\vartheta)
=\widetilde P^{\odot\star}_\Sigma(c_{ij},0)(\vartheta).
\]

The projection onto $\{\lambda\},\{\bar\lambda\}$ is, for a term $(c,0) \in X^{\odot \star }$,
\[
\widetilde P^{\odot\star}_{\{\lambda\}}(c,0)(\vartheta)
=
\frac{p^*\,c}{p^*\Delta'(\lambda)q}\, q\,e^{\lambda\vartheta},
\qquad
\widetilde P^{\odot\star}_{\{\bar\lambda\}}(c,0)(\vartheta)
=
\frac{\overline p^*\,c}{\overline p^*\Delta'(\bar\lambda)\overline q}\, \overline q\,e^{\bar\lambda\vartheta}.
\]
Moreover,
\[
\widetilde P^{\odot\star}_\Sigma
=
\widetilde P^{\odot\star}_{\{\lambda\}}
+
\widetilde P^{\odot\star}_{\{\bar\lambda\}},
\]
is precisely the projection onto the two-dimensional spectral subspace.

Explicitly,
\begin{align*}
D^2H(0)[w_i,w_j](\vartheta)
&=
\frac{p^*\,c_{ij}}{p^*\Delta'(\lambda)q}\; q\,e^{\lambda\vartheta}
+
\frac{\overline p^*\,c_{ij}}{\overline p^*\Delta'(\bar\lambda)\overline q}\;
\overline q\,e^{\bar\lambda\vartheta},
\\
&\hspace{5.8cm}
(i,j)\in\{(1,1),(1,2),(2,2)\}.
\end{align*}

Any $u\in X_\Sigma$ can be written uniquely as $u=\Theta (z) = (\vartheta\mapsto z w_1(\vartheta) + \bar z w_2(\vartheta))$ for some $z\in\mathbb{C}$. Then, for the quadratic reduced coefficients in $\dot z$,
\begin{equation}\label{eq:g_quadratic_from_D2H}
g_{20}=\frac{p^*\,c_{20}}{p^*\Delta'(\lambda)q},\qquad
g_{11}=\frac{p^*\,c_{11}}{p^*\Delta'(\lambda)q},\qquad
g_{02}=\frac{p^*\,c_{02}}{p^*\Delta'(\lambda)q}.
\end{equation}
Analogously, the coefficients in $\dot{\bar z}$ are obtained from the $w_2$ component by replacing $\lambda$ with $\bar\lambda$ and $(p^*,q)$ with $(\overline p^*,\overline q)$.

\subsection*{3. Cubic term $D^3H(0)$}

From Lemma 24 in \parencite{BuzaHaller2025}, we have
\begin{equation}\label{eq:D3H-guide-correct}
\begin{aligned}
D^3H(0)[u,v,w]
&=
\widetilde P_\Sigma^{\odot\star}
D^3R(0)[DW(0)u,DW(0)v,DW(0)w]+
\widetilde P_\Sigma^{\odot\star}\Big(
D^2R(0)[D^2W(0)[u,v],DW(0)w]
+\\
&\ D^2R(0)[D^2W(0)[u,w],DW(0)v]
+
D^2R(0)[D^2W(0)[v,w],DW(0)u]
\Big).
\end{aligned}
\end{equation}

For the Hutchinson equation, the vector field is quadratic, hence $D^3R(0)=0$, and \eqref{eq:D3H-guide-correct} simplifies accordingly.

\subsubsection*{Cubic contribution $z^3$}

We compute the trilinear term $D^3H(0)[w_1,w_1,w_1]$, which produces the $z^3$ contribution in the reduced dynamics. Evaluating at $(w_1,w_1,w_1)$ and using $DW(0)w_1=w_1$ and
$D^2W(0)[w_1,w_1]=W_{20}$, we obtain
\begin{equation}\label{eq:D3H111-correct}
D^3H(0)[w_1,w_1,w_1]
=
3\,\widetilde P_\Sigma^{\odot\star}
D^2R(0)[W_{20},w_1].
\end{equation}

The corresponding boundary coefficient is
\[
D^2R(0)[W_{20},w_1]\equiv (c_{30},0)\quad\text{in }X^{\odot\star},
\]

where
\begin{equation}
c_{30}
:=
\big(D^2R(0)[W_{20},w_1]\big)(0)
=
-\frac{r}{K}
\Big(
e^{-\lambda\tau}W_{20}(0)
+
W_{20}(-\tau)
\Big).
\end{equation}

Projecting onto $\Sigma=\{\lambda,\bar\lambda\}$ gives
\begin{align}
D^3H(0)[w_1,w_1,w_1](\vartheta)
&=
3\,\widetilde P^{\odot\star}_\Sigma(c_{30},0)(\vartheta)\notag=
3\left(
\frac{p^*\,c_{30}}{p^*\Delta'(\lambda)q}\,q\,e^{\lambda\vartheta}
+
\frac{\overline p^*\,c_{30}}{\overline p^*\Delta'(\bar\lambda)\overline q}\,\overline q\,e^{\bar\lambda\vartheta}
\right).
\label{eq:D3H111-fullproj-correct}
\end{align}

\subsubsection*{Cubic contribution $z^2\bar z$}

We next compute the trilinear term producing the $z^2\bar z$ contribution in the reduced dynamics. Evaluating at $(w_1,w_1,w_2)$ and using the notation
$W_{20}=D^2W(0)[w_1,w_1]$ and
$W_{11}=D^2W(0)[w_1,w_2]$, we obtain
\begin{align}
D^3H(0)[w_1,w_1,w_2]
&=
\widetilde P_\Sigma^{\odot\star}
\Big(
D^2R(0)[D^2W(0)[w_1,w_1],w_2]
+
2\,D^2R(0)[D^2W(0)[w_1,w_2],w_1]
\Big) \\
&= \widetilde P_\Sigma^{\odot\star}
\Big(
D^2R(0)[W_{20},w_2]
+
2\,D^2R(0)[W_{11},w_1]
\Big)=\widetilde P^{\odot\star}_\Sigma(c_{21},0)(\vartheta) =\\
&=
\left(
\frac{p^*\,c_{21}}{p^*\Delta'(\lambda)q}\,q\,e^{\lambda\vartheta}
+
\frac{\overline p^*\,c_{21}}{\overline p^*\Delta'(\bar\lambda)\overline q}\,\overline q\,e^{\bar\lambda\vartheta}
\right).
\label{eq:D3H112-correct-compact}
\end{align}

Similarly, one obtains the terms corresponding to $z\bar z^2$ and $\bar z^3$.

\subsection{Comparison between the two methods}

The two methods presented in this appendix are equivalent. Here we verify their equivalence by comparing the coefficients obtained for the manifold up to second order and for the reduced dynamics up to third order.

\subsubsection{SSM expansion coefficients}

\begin{figure}[H]
  \centering
  \includegraphics[width=0.95\textwidth]{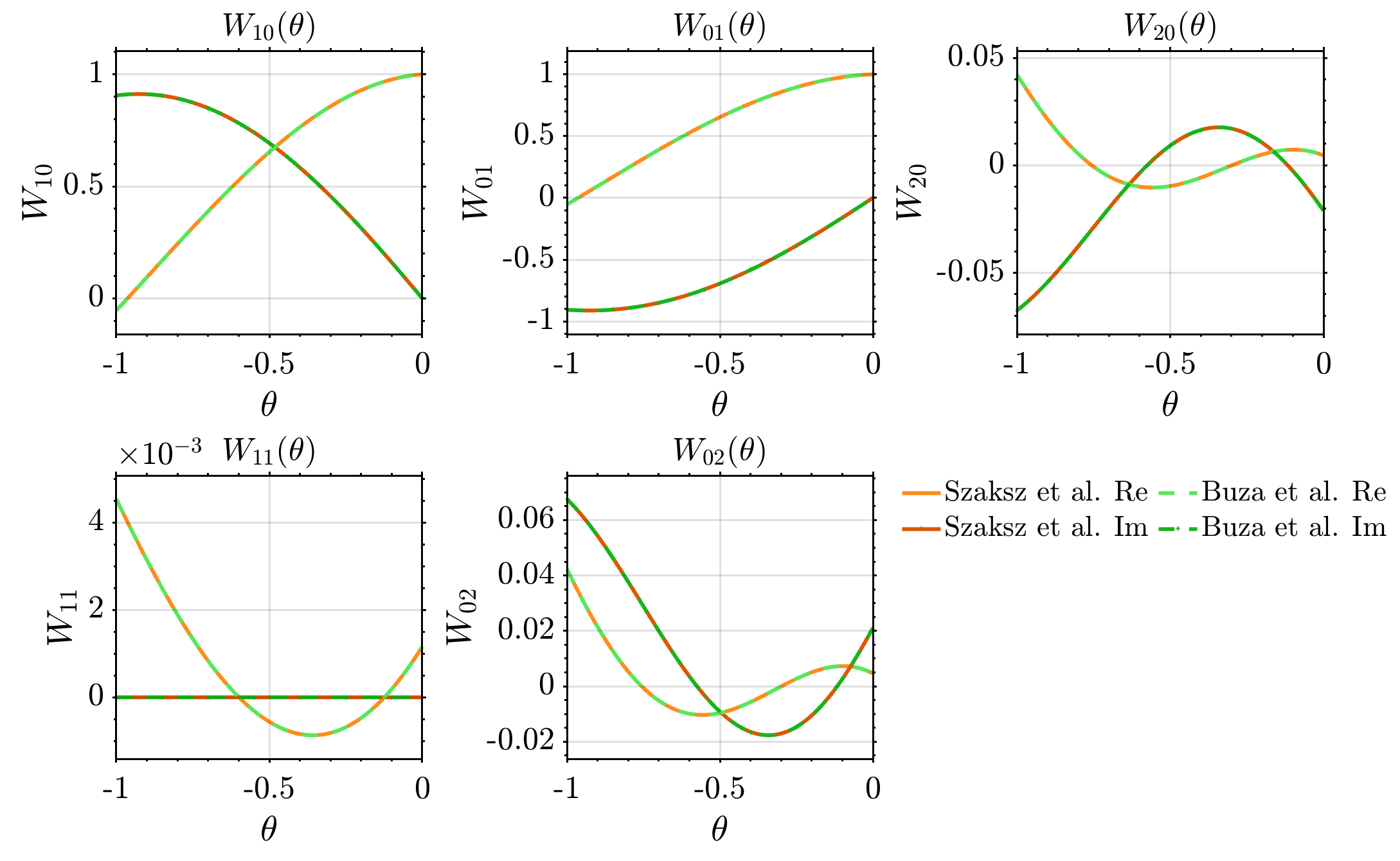}
  \caption{Comparison of the manifold coefficients obtained with the two SSM-reduction approaches, namely the equation-driven method in \parencite{Szaksz2025} and the semigroup-based method of \parencite{BuzaHaller2025}. The figure shows real and imaginary parts of the first- and second-order coefficient functions $W_{10}(\vartheta)$, $W_{01}(\vartheta),W_{20}(\vartheta)$, $W_{11}(\vartheta)$, and $W_{02}(\vartheta)$ over the delay interval $\vartheta\in[-\tau,0]$. The two formulations yield the same manifold parametrization up to second order, consistently with the identical gauge choice used to remove tangential components along the spectral subspace.}
  \label{fig:ManifoldCoeffComp}
\end{figure}

The manifold can be expressed as
\[
W(z,\bar z,\vartheta )=
W_{10}(\vartheta)z + W_{01}(\vartheta)\bar z
+ \frac 12 \big( W_{20}(\vartheta)z^2 + 2W_{11}(\vartheta)z\bar z + W_{02}(\vartheta)\bar z ^2  \big)
+O(|z|^3).
\]

\subsubsection{SSM-reduced dynamics coefficients}

Table~\ref{tab:RedDynCoeffComp} contains the reduced-dynamics coefficients obtained with both methods up to third order, yielding identical results:

\begin{table}[H]
\centering
\renewcommand{\arraystretch}{1.05}
\setlength{\arraycolsep}{2pt}
\caption{Reduced-dynamics coefficients (with a factor $10^{-2}$ collected outside the table).}
\label{tab:RedDynCoeffComp}

\[
\begin{array}{|c|c|c|c|c|c|c|c|}
\hline
  20 & 11 & 02 & 30 & 21 & 12 & 03 \\
\hline
 7.2-4.2\,\mathrm{i} 
& 0.55+0.82\,\mathrm{i} 
& -6.6+5.0\,\mathrm{i} 
& -0.39-0.073\,\mathrm{i} 
& -0.34+0.054\,\mathrm{i} 
& 0.18-0.30\,\mathrm{i} 
& 0.080-0.39\,\mathrm{i} \\
\hline
\end{array}
\]
\end{table}

\paragraph{Use of the table in the reduced dynamics.}

The coefficients reported in Table~\ref{tab:RedDynCoeffComp} are the complex coefficients $b_{jk}$, with an overall factor $10^{-2}$ collected outside the table. They enter directly into the reduced dynamics and its complex conjugate:
\begin{equation}
\dot z = \lambda z 
+ \sum_{j+k=2}^3 b_{jk}\, z^j \bar z^k
+ \mathcal{O}(|z|^4), \qquad 
\dot{\bar z} = \bar{\lambda} \bar z
+ \sum_{j+k=2}^3 \overline{b_{jk}}\, \bar z^j z^k
+ \mathcal{O}(|z|^4),
\end{equation}
The equivalence of the two methods is expected, since the gauge is chosen identically in both cases. In both formulations, the tangential component of the manifold correction along the spectral subspace is removed at every order higher than one. In \parencite{BuzaHaller2025} formulation, this is expressed through the operator $(I-P_\Sigma)$, whereas in \parencite{Szaksz2025} formulation it is imposed through the pairing conditions $\langle W_{jk},\psi_i\rangle=0$. In other words, every higher-order manifold correction is chosen to be normal to the spectral subspace, while the tangential contribution is absorbed into the reduced dynamics coefficients. 

%% file: Sections/AppendixC.tex
\section{Extension of SSM theory to the time-periodic case }\label{appendixC}

%\section{Appendix -- Existence of SSMs for periodic delay DDE}

Let $X : = C([-h,0];\mathbb{R}^n)$; here $h>0$ denotes the maximal delay and $n \in \mathbb{N}$.
Let $O \subset X$ denote an open set containing the origin. 
We consider systems of the form
\begin{subequations} \label{eq:DDE_classical} 
\begin{align}
&\dot{x}(t) = f(t,x_t), \qquad t \geq s \in \mathbb{R}  \label{eq:DDE_classical1} \\ 
& x_s = u,  \label{eq:DDE_classical2}
\end{align}
\end{subequations}
for $f : \mathbb{R} \times O \to \mathbb{R}^n$ continuous, time-periodic, i.e., $f(t+p,u) = f(t,u)$ for some $p > 0$ and all $(t,u) \in \mathbb{R} \times O$, and $f(t,0) = 0$ for all $t \in \mathbb{R}$.
We suppose moreover that $f(t,\cdot)$ is $f(t,\cdot)$ is $C^k$ with $k \geq 1$ for all $t \in \mathbb{R}$.
We shall also suppose $(t,u) \mapsto D_2^jf(t,u)$ is continuous for all $j \leq k$. % this is needed for IMPFT, smoothness of Rhat

\subsection{Sun-star setting}

We cast the system \eqref{eq:DDE_classical} into the sun-star setting of Diekmann et al. \parencite{diekmann2012delay}.\footnote{We assume familiarity with this setting. For the details, we refer to \parencite[Chapter~II]{diekmann2012delay}.}
As usual, we denote by $\{T_0(t)\}_{t \geq 0}$ the shift semigroup acting on $X$; its generator by $A_0$.
The $(\cdot)^{\odot}$ notation should be interpreted with respect to the semigroup $\{T_0(t)\}_{t \geq 0}$.
Recall that $X^{\odot *} \cong \mathbb{R}^n \times L^{\infty}([-h,0];\mathbb{R}^n)$, and
let $\imath : X \xhookrightarrow{} X^{\odot *}$ denote the continuous embedding $u \mapsto (u(0),u)$ defined by the pairing between $X$ and $X^\odot$.
Let 
\begin{displaymath}
    r_i^{\odot *} := (e_i,0) \in X^{\odot *}, \qquad 1\leq i \leq n,
\end{displaymath}
where  $(e_1,\ldots,e_n)$ is the canonical basis of of $\mathbb{R}^n$.
We declare $F : \mathbb{R} \times O \to X^{\odot *}$ to be
\begin{equation}
    F(t,u) := \sum_{i=1}^n \langle f(t,u),e_i\rangle_{\mathbb{R}^n} \, r_i^{\odot *}.
    \label{eq:F}
\end{equation}
We further dissect \eqref{eq:F} into linear and nonlinear parts as
\begin{align*}
    B(t) &: = D_2F(t,0), \\
    R(t,u)&:=F(t,u)-D_2F(t,0)[u].
\end{align*}
The integral equation
\begin{equation}
    x_t = T_0(t) u + \imath^{-1} \int_{s}^t T_0^{\odot *} (t-\tau) F(\tau,x_\tau) \, d\tau, \qquad u \in O, \; t  \geq s,
    \label{eq:AIE}
\end{equation}
determines a \textit{time-dependent semiflow}, i.e., a continuous map $S : \mathcal{D}^S \to X$ defined by
\begin{equation}\label{eq:81}
    S(t,s;u) : = x_t, \qquad \mathcal{D}(S) := \{ (t,s;u) \in [s,\infty) \times \mathbb{R} \times O  \; | \; t \in I_u \},
\end{equation}
where $x_t$ is determined by \eqref{eq:AIE} and $I_u$ is the maximal interval of existence for \eqref{eq:AIE}.
For more details, see \parencite[Section~2.3]{lentjes2025periodic}.
It can be shown that $S(t,s; \cdot )$ is of class $C^k$ via an implicit function argument, as in \parencite[Appenidx~B]{buza2025existence}.
Moreover, solutions of \eqref{eq:DDE_classical} and \eqref{eq:AIE} are in one-to-one correspondence, provided we allow \eqref{eq:DDE_classical} to be satisfied in a weaker sense (see \parencite[Section~VII.6]{diekmann2012delay} for a more elaborate description).

\subsubsection{Linearized semiflow}

In \parencite[Chapter~XII]{diekmann2012delay}, it is shown, under the present assumption of $B$ being strongly continuous, that
\begin{equation}
    U(t,s) u = T_0(t-s) u + \imath^{-1} \int_{s}^t T_0^{\odot *} (t-\tau) B(\tau) U(\tau,s) u \, ds, \qquad u \in X, \; t  \geq s
    \label{eq:AIE_linearpart}
\end{equation}
uniquely determines a strongly continuous forward evolution  system $\{U(t,s)\}_{t \geq s}$.
Since $D_3S(t,s;0)$ also satisfies \eqref{eq:AIE_linearpart}, we must have that $D_3S(t,s;0) = U(t,s)$.

\subsection{Recollection of Floquet theory}

We collect the results we need from \parencite[Chapter~XIII]{diekmann2012delay} concerning Floquet theory in the following proposition.

\begin{proposition} \label{prop:spect}
    The evolution system  $\{U(t,s)\}_{t \geq s}$ defined by \eqref{eq:AIE_linearpart} satisfies the following.
    \begin{enumerate}[label=\upshape{(\roman*)}]
        \item For all $t \geq s$, $U(t+p,s+p) = U(t,s)$.
        \item The spectrum $\sigma(U):=\sigma(U(t+p,t))$ is independent of $t \in \mathbb{R}$, its only accumulation point is $0$, and $\sigma(U(t+p,t)) \setminus\{0\} = \sigma_p(U(t+p,t)) \setminus\{0\}$.
        \item %subbundles of $\mathbb{R}/p \mathbb{Z} \times X$
        For each spectral subset $\Sigma \subset \sigma(U) \setminus \{0\}$ there exists a continuous family projections $\{P_\Sigma(t)\}_{t \in \mathbb{R}}$ on $X$
        of finite dimensional range
        such that $P_\Sigma(t+p) = P_\Sigma(t)$ and $\sigma(U(t+p,t)P_\Sigma(t)) = \Sigma$ for all $t \in \mathbb{R}$. 
        Moreover, for $t \geq s$, $U(t,s)$ maps $\mathrm{im}(P_\Sigma(s))$ onto $\mathrm{im}(P_{\Sigma}(t))$ isomorphically, such that $U(t,s)P_\Sigma(s) = P_\Sigma(t) U(t,s)$.
    \end{enumerate}
\end{proposition}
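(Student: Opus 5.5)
The plan is to derive all three items from the variation-of-constants equation \eqref{eq:AIE_linearpart}, the $p$-periodicity of $B(t)=D_2F(t,0)$, and the eventual compactness of the shift part of the evolution. The argument follows \parencite[Chapter~XIII]{diekmann2012delay}.

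\textbf{Item (i).} Fix $s$ and $u$, and set $V(t):=U(t+p,s+p)u$ for $t\ge s$. Substitute $\tau\mapsto\tau+p$ in \eqref{eq:AIE_linearpart} and use $B(\tau+p)=B(\tau)$. This shows that $V$ satisfies the same linear integral equation as $U(t,s)u$, with the same initial datum $u$. Uniqueness of solutions of \eqref{eq:AIE_linearpart}, via the usual Gronwall/contraction argument for bounded strongly continuous $B$, then gives $V(t)=U(t,s)u$.

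\textbf{Eventual compactness of the monodromy operator.} This is the key ingredient for (ii), and I expect it to be the main technical point. For $t-s\ge h$, the term $T_0(t-s)u$ only involves the value $u(0)$, so it has finite rank. The convolution term maps bounded sets to sets of functions that are bounded and uniformly Lipschitz on $[-h,0]$: the integrand lies in $\mathrm{span}\{r_i^{\odot*}\}$, and weak$^*$-integrals against $T_0^{\odot*}$ of such elements land in $X$ with derivative bounded by $\sup\|B\|\,\|U(\tau,s)u\|$. By Arzel\`a--Ascoli, $U(t,s)$ is therefore compact whenever $t-s\ge h$. If $p<h$, pick $m$ with $mp\ge h$. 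By (i) and the evolution property,
\[
U(t+p,t)^m=U(t+mp,t),
\]
so $U(t+p,t)$ is power compact.

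\textbf{Item (ii).} Riesz--Schauder theory for power-compact operators now gives the structure of the spectrum: it is countable, its only possible accumulation point is $0$, every nonzero point is an eigenvalue, and the corresponding Riesz projections have finite rank. For independence of $t$, take $s\le t\le s+p$ and set $A=U(s+p,t)$, $C=U(t,s)$. Then $AC=U(s+p,s)$ and $CA=U(t+p,t)$, again using (i). The identity $\sigma(AC)\setminus\{0\}=\sigma(CA)\setminus\{0\}$ holds in any Banach algebra. Combined with $p$-periodicity, this gives independence of $t$ for all $t\in\mathbb{R}$.

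\textbf{Item (iii).} Let $\Sigma\subset\sigma(U)\setminus\{0\}$ be a spectral subset; since $0\notin\Sigma$ and nonzero points are isolated, $\Sigma$ is finite. Define $P_\Sigma(t)$ as the Riesz projection of $U(t+p,t)$ over a contour around $\Sigma$ that avoids $0$. It has finite rank by the previous step. Periodicity $P_\Sigma(t+p)=P_\Sigma(t)$ is immediate from (i).

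The intertwining relation comes from the same factorization. For $t\ge s$,
\[
U(t,s)\,U(s+p,s)=U(t+p,s+p)\,U(t,s)\cdot{}
\]
read correctly as $U(t,s)U(s+p,s)=U(t+p,t)U(t,s)$, using (i). Passing to resolvents and integrating over the contour yields $U(t,s)P_\Sigma(s)=P_\Sigma(t)U(t,s)$.

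To see that $U(t,s)$ maps $\mathrm{im}P_\Sigma(s)$ isomorphically onto $\mathrm{im}P_\Sigma(t)$, first note that $U(s+np,t)\,U(t,s)=U(s+p,s)^n$ for $s+np\ge t$. On $\mathrm{im}P_\Sigma(s)$ the right-hand side is invertible, because the spectrum of the restriction is $\Sigma\not\ni0$. Hence $U(t,s)$ is injective there. Applying the same argument with the roles of $s$ and $t$ shifted by periods shows that the two images have equal dimension, which gives surjectivity.

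\textbf{Continuity of $t\mapsto P_\Sigma(t)$.} This is the delicate part. Strong continuity of $U$ does not give norm continuity of the monodromy operator directly. My approach is to write $P_\Sigma(t)=U(t,s)P_\Sigma(s)\,\bigl(U(t,s)|_{\mathrm{im}P_\Sigma(s)}\bigr)^{-1}P_\Sigma(t)$, or, more cleanly, to express $P_\Sigma(t)$ through a fixed finite basis transported by $U(t,s)$ together with dual functionals transported by the adjoint evolution. Continuity then follows from continuity of these finitely many vector-valued functions.
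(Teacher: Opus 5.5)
Your arguments for (i), (ii) and the algebraic part of (iii) are correct. They unpack what the paper simply quotes from Diekmann et al.\ (Corollary XIII.2.2 and Theorem XIII.3.3) together with power compactness. The ingredients are: the shift $\tau\mapsto\tau+p$ plus uniqueness; Arzel\`a--Ascoli for $U(t,s)$ with $t-s\ge h$; the identity $\sigma(AC)\setminus\{0\}=\sigma(CA)\setminus\{0\}$ for $t$-independence; and the intertwining $U(t,s)U(s+p,s)=U(t+p,t)U(t,s)$ pushed through the Riesz integral. One small addition is needed: for the whole spectrum to be $t$-independent, you also need $0\in\sigma(U(t+p,t))$ for every $t$. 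This holds because a power-compact operator on the infinite-dimensional $X$ is not invertible.

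The genuine gap is the continuity of $t\mapsto P_\Sigma(t)$. Your first formula is a tautology, since $P_\Sigma(t)$ reappears on the right-hand side. In the second, the basis $\phi_i(t)=U(t,s)\phi_i$ is norm continuous, but the dual functionals are transported backwards, $\psi_i(t)=U(T,t)^{*}\psi_i(T)$. An adjoint evolution on $X^{*}$ is in general only weak$^{*}$ continuous, which is exactly why one passes to the sun dual. So $P_\Sigma(t)=\sum_i\langle\psi_i(t),\cdot\rangle\phi_i(t)$ comes out only strongly continuous, whereas the paper's appeal to Kato's Theorem IV.3.16 yields norm continuity. Strong continuity of $B$ alone cannot give more. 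Take a point delay $B(t)\phi=b(t)\phi(-\rho(t))$ whose retarded argument $t-\rho(t)$ equals a constant $c$ on some interval. For suitable $t$, $U(t+p,t)u$ then depends on the point value $u(c-t)$, whose location moves with $t$, so the monodromy operator is not norm continuous, and generically neither are its eigenfunctionals.

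The missing input is norm continuity of $t\mapsto B(t)=D_2F(t,0)$. The standing hypothesis that $(t,u)\mapsto D_2^jf(t,u)$ is continuous supplies this when read in operator norm. Subtracting \eqref{eq:AIE_linearpart} written for $U(t+\sigma,t)$ and for $U(t'+\sigma,t')$ and applying Gronwall's inequality gives $\|U(t+p,t)-U(t'+p,t')\|\le C\sup_{0\le r\le p}\|B(t+r)-B(t'+r)\|$. By (ii) the spectrum is independent of $t$, so one fixed contour around $\Sigma$ serves for all $t$. The resolvents are then jointly norm continuous on it, and so is the Riesz integral; this is the paper's route. Alternatively, the same estimate together with the equi-Lipschitz range of $U(T,t)$ for $T-t\ge h$ makes $t\mapsto U(T,t)$, hence $t\mapsto U(T,t)^{*}$, norm continuous there. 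That would rescue your dual-transport argument.
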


\begin{proof}
    The first assertion is covered in \parencite[Corollary~XIII.2.2]{diekmann2012delay}.
    The second is a consequence of   \parencite[Theorem~XIII.3.3(i)]{diekmann2012delay} and the fact that $U(t+p,t)^j$ is compact for some $j \in \mathbb{N}$ large enough -- in particular, $\sigma(U(t+p,t))$  is as described in the statement and always contains $0$.
    The third assertion follows from spectral properties of compact operators and  \parencite[Theorem~XIII.3.3(ii)]{diekmann2012delay}; continuity of $t \mapsto P_\Sigma(t)$ follows from \parencite[Theorem~IV.3.16]{kato2013perturbation} and time-periodicity of $U$.
    For a direct proof of continuity, see \parencite[Appendix~A.2]{lentjes2025periodic}.
\end{proof}

\subsection{Modification of the nonlinearity}

To apply invariant manifold theory, one needs to control the global Lipschitz constant of the nonlinearity.
The standard procedure in delay equations is to apply two cut-off functions (in order to preserve smoothness in a desired region),
\begin{displaymath}
    R_{\rho,s} (t,u) : =  \chi_\rho ( |P_\Sigma(s) u |_X) \chi_\rho (|u-P_{\Sigma}(s) u |_X) R(t,u), \qquad (t,u) \in \mathbb{R} \times X,
\end{displaymath}
where $\chi: [0,\infty) \to [0,1]$ is a $C^\infty$ function satisfying
\begin{enumerate}[label=\upshape{(\roman*)}]
    \item $\chi(y) = 1$ for $y \in [0,1]$,
    \item $\chi(y) \in [0,1]$ for $y \in (1,2)$,
    \item $\chi(y) = 0$ for $y \geq 2$,
    \item  $|D\chi (y)| \leq 2$ for all $y \in \mathbb{R}^{\geq 0}$,
\end{enumerate}
and $\chi_\rho(u) : = \chi(u/\rho)$.

We remark that $(t,u) \mapsto R_{\rho,t}(t,u)$ preserves the smoothness of $R$
on the set 
\begin{equation}
     \left\{    (t,u) \in \mathbb{R} \times X     \; \big| \;      |u-P_{\Sigma}(t) u | < \max\{\rho , 2 |P_\Sigma(t) u| \}        \right\} . 
    \label{eq:smoothnessset}
\end{equation}

We quote the following Proposition from \parencite{lentjes2025periodic}.

\begin{proposition}[Proposition~6, \parencite{lentjes2025periodic}] \label{prop:lentjes}
    For $s \in \mathbb{R}$ and sufficiently small $\rho>0$, the operator $R_{\rho,s} (t,\cdot)$ is globally Lipschitz continuous for any $t \in \mathbb{R}$ with Lipschitz constant $\lip(R_{\rho,s} (t,\cdot)) \to 0$ as $\rho \searrow 0$ independent of $s$.
\end{proposition}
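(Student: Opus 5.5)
The plan is the standard cut-off estimate. The two things to watch are that every constant is uniform in $t$ and in $s$, and that the argument uses only $C^1$-smoothness of $R(t,\cdot)$ together with joint continuity of $(t,u)\mapsto D_2R(t,u)$.

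First I collect the uniform ingredients.
\begin{enumerate}[label=\upshape{(\alph*)}]
\item By Proposition~\ref{prop:spect}(iii), $s\mapsto P_\Sigma(s)$ is continuous and $p$-periodic. Hence $M:=\sup_{s\in\mathbb{R}}\|P_\Sigma(s)\|<\infty$, and $\sup_s\|I-P_\Sigma(s)\|\le 1+M$.
\item By definition, $R(t,0)=0$ and $D_2R(t,0)=0$ for all $t$. The map $(t,u)\mapsto D_2R(t,u)=D_2F(t,u)-D_2F(t,0)$ is continuous and $p$-periodic in $t$. A tube-lemma argument over the compact set $[0,p]\times\{0\}$ then shows that
\begin{displaymath}
\eta(r):=\sup_{t\in\mathbb{R},\,|u|_X\le r}\|D_2R(t,u)\|\longrightarrow 0 \quad\text{as } r\searrow 0 .
\end{displaymath}
I fix $\rho_0$ with $4\rho_0$ smaller than the radius of a ball contained in $O$ (extending $R$ by zero outside that ball is harmless after the cut-off).
\item On the support of $R_{\rho,s}(t,\cdot)$ we have $|P_\Sigma(s)u|\le 2\rho$ and $|u-P_\Sigma(s)u|\le 2\rho$, hence $|u|\le 4\rho$. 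The mean value inequality on the convex ball $B_{4\rho}$ gives
\begin{displaymath}
|R(t,u)-R(t,v)|\le \eta(4\rho)|u-v| \quad\text{and}\quad |R(t,v)|\le 4\rho\,\eta(4\rho) \qquad \text{for } u,v\in B_{4\rho}.
\end{displaymath}
\end{enumerate}

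Next I bound the cut-off factor. Write $\Phi_s(u):=\chi_\rho(|P_\Sigma(s)u|)\,\chi_\rho(|u-P_\Sigma(s)u|)$. Since $|D\chi|\le 2$, each $\chi_\rho$ is $(2/\rho)$-Lipschitz, and each factor is bounded by $1$. The reverse triangle inequality together with (a) then gives
\begin{displaymath}
|\Phi_s(u)-\Phi_s(v)|\le \frac{2}{\rho}\big(M+(1+M)\big)|u-v| = \frac{2(2M+1)}{\rho}|u-v| .
\end{displaymath}

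The Lipschitz estimate then splits into two cases.
\begin{enumerate}[label=\upshape{(\roman*)}]
\item Both $u,v$ lie in the closed set $\{|P_\Sigma(s)\cdot|\le 2\rho,\ |(I-P_\Sigma(s))\cdot|\le 2\rho\}\subset B_{4\rho}$. I decompose
\begin{displaymath}
R_{\rho,s}(t,u)-R_{\rho,s}(t,v)=\Phi_s(u)\big(R(t,u)-R(t,v)\big)+\big(\Phi_s(u)-\Phi_s(v)\big)R(t,v).
\end{displaymath}
Combining the bounds above yields a Lipschitz constant
\begin{displaymath}
L(\rho):=\eta(4\rho)+\frac{2(2M+1)}{\rho}\cdot 4\rho\,\eta(4\rho)=\big(1+8(2M+1)\big)\eta(4\rho).
\end{displaymath}
The factor $1/\rho$ from the cut-off exactly cancels the factor $\rho$ from $|R|$ on the support. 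This cancellation is the key point.
\item Exactly one of the points, say $v$, lies outside that set, so $R_{\rho,s}(t,v)=0$. If both lie outside, the difference is zero. Move along the segment from $u$ to $v$ and let $w$ be the first point at which one of the two norms equals $2\rho$. Then $\Phi_s(w)=0$, and $w$ still belongs to the set, so
\begin{displaymath}
|R_{\rho,s}(t,u)-R_{\rho,s}(t,v)|=|R_{\rho,s}(t,u)-R_{\rho,s}(t,w)|\le L(\rho)|u-w|\le L(\rho)|u-v| .
\end{displaymath}
\end{enumerate}

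Hence $\lip(R_{\rho,s}(t,\cdot))\le(16M+9)\,\eta(4\rho)$. The right-hand side is independent of $t$ and $s$ and tends to $0$ as $\rho\searrow 0$, by (b).

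The main obstacle is step (b): obtaining $\eta(r)\to0$ \emph{uniformly in} $t$. The hypotheses give only pointwise $C^k$-smoothness of $f(t,\cdot)$ plus joint continuity of $D_2^jf$, so compactness of one period and $p$-periodicity must be used explicitly. The uniformity in $s$ comes entirely from the uniform bound $M$ in (a).
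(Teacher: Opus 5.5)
Your proof is correct. The paper gives no argument of its own here: it imports the statement as Proposition~6 of \parencite{lentjes2025periodic}.

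What you supply is the standard two-cut-off estimate from center-manifold constructions for delay equations. The $2/\rho$ Lipschitz constant of $\chi_\rho$ is compensated by the bound $|R(t,v)|\le 4\rho\,\eta(4\rho)$ on the support. The only genuinely periodic ingredient is your tube-lemma argument over one period, which makes $\eta$ uniform in $t$.

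Relative to the bare citation, your route is self-contained. It shows that only the following are needed:
\begin{itemize}
\item pointwise $C^1$-smoothness of $f(t,\cdot)$;
\item joint (operator-norm) continuity of $D_2f$;
\item periodicity;
\item $M=\sup_s\|P_\Sigma(s)\|<\infty$.
\end{itemize}
It also yields the explicit bound $(16M+9)\,\eta(4\rho)$, uniform in both $t$ and $s$. Uniformity in $t$ is slightly more than the statement literally asserts. It is, however, exactly what the proof of Theorem~\ref{thm:main} uses: the cut-off semiflow involves $R_{\rho,\tau}(\tau,\cdot)$, and a single $\rho$ must work for every period map $S_\rho(t+p,t;\cdot)$.

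Two minor simplifications are available.
\begin{itemize}
\item In case (ii) you do not need the segment argument. Since your cut-off factor satisfies $\Phi_s(v)=0$, you have $R_{\rho,s}(t,u)-R_{\rho,s}(t,v)=(\Phi_s(u)-\Phi_s(v))\,R(t,u)$. This is at most $8(2M+1)\,\eta(4\rho)\,|u-v|$ by the global Lipschitz bound on $\Phi_s$.
\item $M<\infty$ holds even if $s\mapsto P_\Sigma(s)$ is only strongly continuous. This follows from the uniform boundedness principle on $[0,p]$ together with periodicity.
\end{itemize}
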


Hence, the time-dependent semiflow defined by
\begin{multline}
        S_\rho(t,s;u) = T_0(t) u 
        + \imath^{-1} \int_{s}^t T_0^{\odot *} (t-\tau) \left[B(\tau)S_\rho(\tau,s;u) + R_{\rho,\tau}(\tau,S_\rho(\tau,s;u)) \right] \, d\tau,  \\ \qquad u \in X, \; t  \geq s,
        \label{eq:cutoff_semiflow}
\end{multline}
is globally defined for $\rho>0$ small enough, i.e., $\mathcal{D}(S_\rho) = [s,\infty) \times \mathbb{R} \times X$; moreover $S_\rho(t,s;\cdot) $ is Lipschitz on $X$. % technically you'd need unif Lip cont on R wrt t, but since we are t-periodic its fine .

\begin{lemma} \label{lemma:Sper}
    For $t \geq s$, $S_\rho(t+p,s+p; \cdot) = S_\rho(t,s;\cdot)$.
\end{lemma}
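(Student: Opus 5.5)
The plan is to show that the shifted function $y(t) := S_\rho(t+p,s+p;u)$, for $t\ge s$, satisfies the same fixed-point equation \eqref{eq:cutoff_semiflow} as $x(t):=S_\rho(t,s;u)$. The claim then follows from uniqueness of solutions of that integral equation. Throughout I read the free term in \eqref{eq:cutoff_semiflow} (and in \eqref{eq:AIE}) as $T_0(t-s)u$, which is the only reading under which the equation defines an evolution starting at time $s$. This matters here, since $T_0((t+p)-(s+p)) = T_0(t-s)$.

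\textbf{Step 1: periodicity of the ingredients.} From $f(t+p,u)=f(t,u)$ and the definition \eqref{eq:F}, we get $F(t+p,\cdot)=F(t,\cdot)$. Hence $B(t+p)=D_2F(t+p,0)=B(t)$ and $R(t+p,\cdot)=R(t,\cdot)$. By Proposition~\ref{prop:spect}(iii), $P_\Sigma(\tau+p)=P_\Sigma(\tau)$. Substituting into the definition of the cut-off nonlinearity gives
\[
R_{\rho,\tau+p}(\tau+p,v)=R_{\rho,\tau}(\tau,v)\qquad\text{for all } \tau\in\mathbb{R},\ v\in X .
\]

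\textbf{Step 2: change of variables.} Write out \eqref{eq:cutoff_semiflow} for $S_\rho(t+p,s+p;u)$ and substitute $\tau=\sigma+p$ in the integral, with $\sigma\in[s,t]$. Since $T_0^{\odot *}((t+p)-(\sigma+p))=T_0^{\odot *}(t-\sigma)$, Step 1 shows that
\[
y(t)=T_0(t-s)u+\imath^{-1}\int_s^t T_0^{\odot *}(t-\sigma)\big[B(\sigma)y(\sigma)+R_{\rho,\sigma}(\sigma,y(\sigma))\big]\,d\sigma .
\]
This is exactly the equation satisfied by $x(t)=S_\rho(t,s;u)$.

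\textbf{Step 3: uniqueness (the only non-bookkeeping step).} I would argue by Gronwall. Standard sun-star theory gives $\|T_0^{\odot *}(t)\|\le Me^{\omega t}$ and $\|\imath^{-1}\int_s^t T_0^{\odot *}(t-\sigma)g(\sigma)\,d\sigma\|\le \int_s^t Me^{\omega(t-\sigma)}\|g(\sigma)\|\,d\sigma$ for continuous $g$. The map $\sigma\mapsto B(\sigma)$ is continuous and $p$-periodic, so $\sup_\sigma\|B(\sigma)\|<\infty$. By Proposition~\ref{prop:lentjes}, $R_{\rho,\sigma}(\sigma,\cdot)$ is globally Lipschitz with a constant $L_\rho$ that is uniform in $\sigma$. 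Setting $w(t)=\|x(t)-y(t)\|_X$, we obtain
\[
w(t)\le M\big(\sup\|B\|+L_\rho\big)\int_s^t e^{\omega(t-\sigma)}w(\sigma)\,d\sigma ,
\]
and Gronwall forces $w\equiv 0$ on $[s,\infty)$. The point that needs care is uniformity in $\sigma$ of the Lipschitz bound. This is exactly what Proposition~\ref{prop:lentjes} provides, together with periodicity and continuity of $(t,u)\mapsto R$. Once this is settled, $S_\rho(t+p,s+p;\cdot)=S_\rho(t,s;\cdot)$ for all $t\ge s$.
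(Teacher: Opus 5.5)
Your proof is correct and follows the paper's route. The paper simply defers to Diekmann et al.\ (Proposition XIII.2.1 and Corollary XIII.2.2), noting that the argument uses only the $p$-periodicity of the time-dependent terms ($B$, $R$, and $P_\Sigma$ inside the cut-off), not linearity, so that the shift $\tau=\sigma+p$ plus uniqueness of solutions of the integral equation gives the claim. Your explicit Gronwall estimate with a Lipschitz bound uniform in $\sigma$, and your reading of the free term as $T_0(t-s)u$ (the $T_0(t)u$ in \eqref{eq:cutoff_semiflow} is a typo), fill in details the paper leaves to that reference.
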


\begin{proof}
    This is shown analogously to \parencite[Proposition~XIII.2.1]{diekmann2012delay} and \parencite[Corollary~XIII.2.2]{diekmann2012delay}, noting that it only relies on the time-periodicity of the time-dependent term, not its linearity.
\end{proof}

\subsection{Proof of Theorem \ref{thm:main}}

\begin{comment}
To make the statement more concise, we set
\begin{displaymath}
    \mathcal{U}(U) = \big\{ (t,s,u) \in \mathcal{D}(S) \; \big| \; S([t,s] \times \{s\} \times \{u \}) \subset U \big\},
\end{displaymath}
and $\mathcal{U}_{s,u}(U) = \{t \in [s,\infty) \; | \; (t,s,u) \in \mathcal{U}(U)\}$.
Moreover, let us denote
\begin{displaymath}
    X = X_\Sigma(t) \oplus X_{\Sigma'}(t) := \mathrm{im}(P_\Sigma(t))\oplus \ker(P_\Sigma(t)), \qquad t \in \mathbb{R},
\end{displaymath}
with projection $P_\Sigma$ as in Proposition~\ref{prop:spect}.
\end{comment}

\begin{proof} [Proof of Theorem \ref{thm:main}].
    We apply \parencite[Theorem~3.1]{chen1997invariant} to the maps $S_\rho(t+p,t;\cdot)$, $t \in \mathbb{R}$.
For this, we set
\begin{displaymath}
    \Phi_t(u) = L_tu + N_t(u), \qquad (t,u) \in \mathbb{R} \times X,
\end{displaymath}
with $L_t := U(t+p,t)$ and $N_{t}(u) := S_\rho(t+p,t;\cdot)-U(t+p,t)$. We need to verify (H.3) and (H.4) of \parencite{chen1997invariant}.
    The prior follows from Proposition~\ref{prop:spect} applied with $\Sigma$ as in the statement and the spectral radius formula \parencite[Theorem~5.2.7]{buhler2018functional}, whereas (H.4) follows from Proposition~\ref{prop:lentjes}, by which we can also guarantee (H.3)-(H.4) hold for all $\{\Phi_t\}_{t \in \mathbb{R}}$ for a single choice of $\rho>0$.

    Let $G_t(\gamma)$ denote the invariant Lipschitz manifold obtained by applying \parencite[Theorem~3.1]{chen1997invariant} to $\Phi_t$ according to the splitting $X = \mathrm{im}(P_\Sigma(t))\oplus \ker(P_\Sigma(t))$,
    \begin{multline*}
        G_t(\gamma) = \big\{ u \in X \; \big| \; \text{there is a negative semiorbit of } \Phi_t, \; \{ u_n\}_{n \leq 0}, \text{ with } u_0 =u  \\ \text{such that } \limsup_{n \to -\infty}\frac{1}{|n|} \ln |u_n| \leq - \ln \gamma \big\}.
    \end{multline*}
    
    We claim that $S_\rho(t,s,G_s(\gamma)) \subset G_t(\gamma)$ for all $t \geq s$.
    Fix $u \in G_s(\gamma)$ and let $\{u_n\}_{n \leq 0}$ denote a negative semiorbit of $\Phi_s$ with $u_0 =u$.
    Set $v:=S_\rho(t,s;u)$ and $v_n := S_\rho(t,s;u_n)$, $n \leq 0$.
    Then $\Phi_t (v_n)= S_\rho(t+p,t; S_\rho(t,s;u_n)) = S_\rho(t+p,s+p;u_{n+1})=v_{n+1}$ for $n \leq -1$ (recall Lemma~\ref{lemma:Sper}), hence $\{v_n\}_{n \leq 0}$ is a negative semiorbit of $\Phi_t$ with $v_0 = v$.
    Since $S_\rho(t,s;\cdot)$ is Lipschitzian, we have $|v_n| \leq C |u_n|$ for all $n \leq 0$ and hence
    \begin{displaymath}
        \limsup_{n \to -\infty}\frac{1}{|n|} \ln |v_n| \leq - \ln \gamma.
    \end{displaymath}
    This shows the claim.

    For the converse inclusion, $S_\rho(t,s,G_s(\gamma)) \supset G_t(\gamma)$, we set $j \in \mathbb{N}$ to be the unique integer for which $t-jp \leq s < t-(j-1)p$.
    Fix $v \in G_t(\gamma)$ and let $\{v_{n}\}_{n \leq 0}$ denote a negative semiorbit under $\Phi_t$ such that $v_0 = v$.
    We set $u:= S_\rho(s,t-jp;v_{-j})$, which is an element of $G_s(\gamma)$ by the previous claim and the fact that $v_{-j} \in G_{t-jp}(\gamma) = G_t(\gamma)$, which in turn follows from \parencite[Theorem~3.1(ii)]{chen1997invariant}.
    Computing  $S_\rho(t,s;u ) = \Phi^j_t(v_{-j}) =v$ shows the converse inclusion.

    Hence, $G_t(\gamma) = S_\rho(t,s,G_s(\gamma))$ for all $t \geq s$.
    This shows that $t \mapsto G_t(\gamma)$ is continuous.
    Recall from \parencite[Equation~(3.16)]{chen1997invariant} and Proposition~\ref{prop:lentjes} that we can control the Lipschitz constant of the function whose graph is $G_t(\gamma)$ via adjusting $\rho$, uniformly with respect to $t$.
    Upon examining \eqref{eq:cutoff_semiflow} and the set \eqref{eq:smoothnessset} on which $(t,u) \mapsto R_{\rho,t}(t,u)$ preserves the smoothness of $R$, we infer that $\Phi_t$ is $C^k$ on a neighborhood of $G_t(\gamma)$.
    Thus, each $G_t(\gamma)$ is $C^\ell$ smooth according to \parencite{irwin1980new}.
    The local invariant manifold statements \ref{st1}-\ref{st2} now follow by observing that $S_\rho=S$ on a small enough neighborhood of $[s,\infty)\times \mathbb{R} \times \{0\}$.

    For statement \ref{st3} we utilize the foliation part of \parencite[Theorem~3.1]{chen1997invariant}.
    For this, recall that leaves of said foliations are defined  by
    \begin{equation}
        M_{t,u}(\gamma) = \big\{ v \in X \; \big| \;  \limsup_{n \to \infty}\frac{1}{n} \ln |\Phi_t^n(u) - \Phi_t^n(v)| \leq \ln \gamma \big\}.
        \label{eq:M_def}
    \end{equation}
    We show $S_\rho(t,s,M_{s,u}(\gamma)) \subset M_{t,S_\rho(t,s;u)}(\gamma)$ for $t \geq s$.
    Fix $v \in M_{s,u}(\gamma) $. 
    We have $ \Phi^n_t \circ S_\rho(t,s;\cdot) = S_\rho(t+np,s;\cdot) = S_\rho(t+np,s+np,\Phi_s^n(\cdot))$, hence
    \begin{displaymath}
        \limsup_{n \to \infty}\frac{1}{n} \ln | \Phi^n_t \circ S_\rho(t,s;v) - \Phi^n_t \circ S_\rho(t,s;u)| \leq \ln \gamma,
    \end{displaymath}
    using the Lipschitzness of $S_\rho(t,s;\cdot)$ once more.
    This shows the desired property, and hence, upon using a contraction mapping argument to define $\pi_t^\rho$ to be the continuous projection mapping $M_{t,u}(\gamma)$ to its unique intersection with $G_t(\gamma)$ (see \parencite[Section~4]{buza2024spectral}), we obtain \eqref{eq:foliation_invariance} globally for the cut-off setting; the local version then follows as above.

    Finally, we observe that \eqref{eq:foliation_decay} follows by \eqref{eq:M_def}.
    Recall from \parencite[Theorem~3.1(iv)]{chen1997invariant} that one can replace $\gamma$ by $\gamma - \varepsilon$ for $\varepsilon > 0$ small enough, since $\gamma \notin |\sigma(U)|$.
    Hence, for $u \in X$ and $s \in \mathbb{R}$ fixed, \eqref{eq:M_def} implies the existence of a $C>0$ such that 
    \begin{displaymath}
        |\Phi_s^n(u) - \Phi_s^n(\pi_s(u))| < C\gamma^n \qquad \text{for all } n \in \mathbb{N}.
    \end{displaymath}
    For $t \geq s$, there exists a unique $n \in \mathbb{N}$ for which $s+np \leq t < s+(n+1)p$.
    Therefore, using Lipschitzness of $S_\rho(t,s;\cdot)$ once more,
    \begin{align*}
        | S_\rho(t,s;u) - S_\rho(t,s;\pi_s(u))| &\leq \widetilde{C} |\Phi_s^n(u) - \Phi_s^n(\pi_s(u))| \\
        & < C \widetilde{C} \gamma^n \\
        & \leq  \hat{C} e^{(t-s) \ln(\gamma)/p}. % depending on  whether \gamma is smaller or greater than 1, one of the two bounds is used the possible remainder is absorbed in C
    \end{align*}
    The local version follows as above.
\end{proof}

\begin{remark}
    Note that, instead of considering \eqref{eq:AIE},
    we could have also considered the integral equation obtained by considering $R$ as a perturbation above $\{U(t,s)\}_{t \geq s}$, using a sun-star calculus for $\{U(t,s)\}_{t \geq s}$.
    This would have enabled a more elegant proof conducted entirely in the time-dependent setting, without the need to appeal to Poincar\'e maps, just as in the case of periodic center manifolds \parencite{Lentjes2025}. 
    % (This setting would be necessary to obtain normal forms, but for external forcing this)
    Since we do not wish to derive rigorous formulas for the computation of manifold coefficients here (our example is dealt with in a data-driven setting), Theorem~\ref{thm:main} suffices.
\end{remark}

%% file: Sections/AppendixD.tex
\section{Mackey-Glass SSM-reduced dynamics}\label{appendixD}

\begin{figure}[H]
  \centering
  \includegraphics[width=1.0\textwidth]{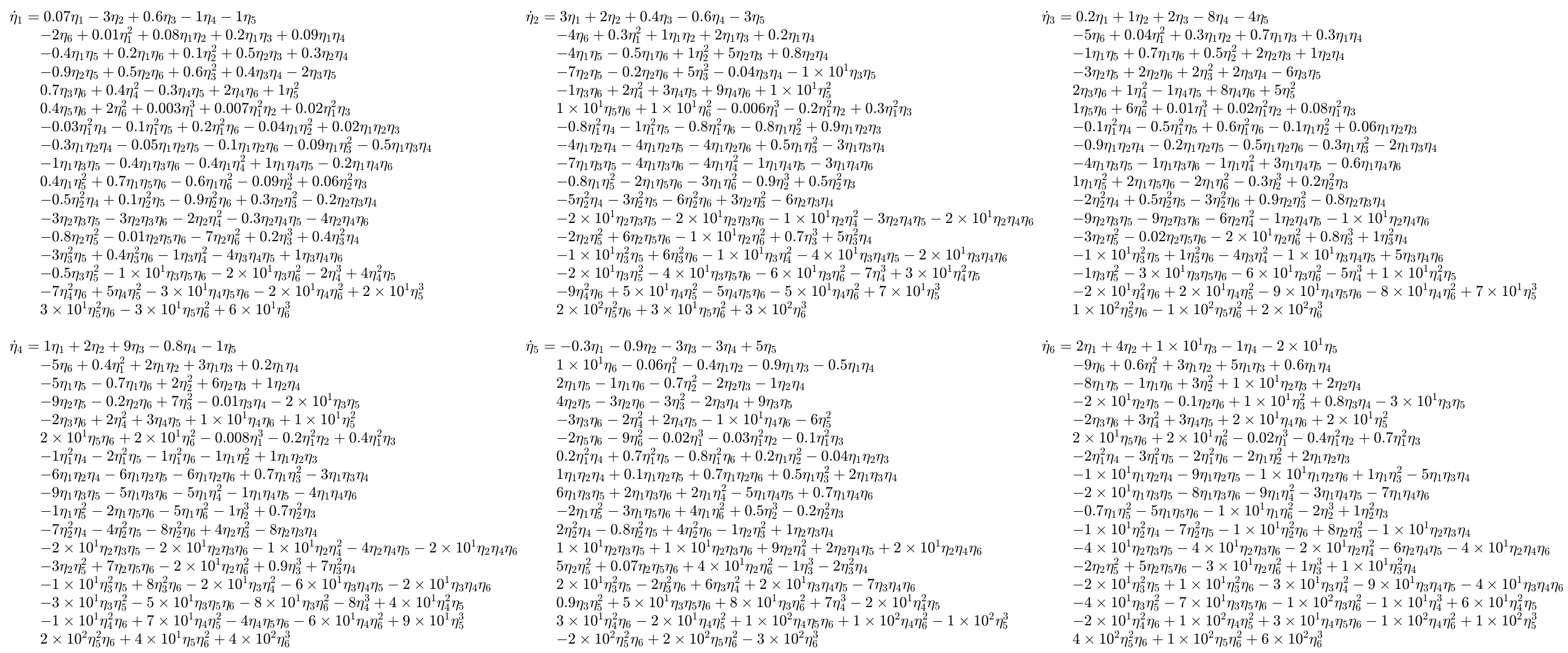}
  \caption{Polynomial 6D ODE SSM-reduced model for the fulll system \eqref{eq:mackey-glass}, with coefficients rounded to one significant digit.}
  \label{fig:DE_MG_redDyn}
\end{figure}